\newtheorem{thm}{Theorem}[section]
\newtheorem*{thmA}{Theorem A}
\newtheorem*{thmB}{Theorem B}
\newtheorem*{thmC}{Theorem C}
 \newtheorem{cor}{Corollary}[section]
  \newtheorem{lem}{Lemma}[section]
 \newtheorem{prop}{Proposition}[section]
 \theoremstyle{definition}
  \newtheorem*{ack}{Acknowledgments}
 \theoremstyle{remark}
\newtheorem{rem}{Remark}[section]
 \numberwithin{equation}{section}
\newcommand{\f}{\left(}
\renewcommand{\r}{\right)}
\newcommand{\R}{\mathbb{R}}
\renewcommand{\a}{\alpha}
\renewcommand{\b}{\beta}
\newcommand{\g}{\varphi}
\renewcommand{\d}{\delta}
\renewcommand{\k}{\kappa}
\renewcommand{\l}{\lambda}
\newcommand{\s}{\sigma}
\newcommand{\metric}[2]{\ensuremath{\langle #1, #2\rangle}}  %% command for the metric  < , >
\begin{document}
\title{Asymptotic convergence for a class of anisotropic curvature flows}

\author{Haizhong Li}
\address{Department of Mathematical Sciences, Tsinghua University, Beijing 100084, P.R. China}
\email{\href{mailto:lihz@tsinghua.edu.cn}{lihz@tsinghua.edu.cn}}

\author{BOTONG XU}
\address{Department of Mathematical Sciences, Tsinghua University, Beijing 100084, P.R. China}
\email{\href{mailto:xbt17@mail.tsinghua.edu.cn}{xbt17@mail.tsinghua.edu.cn}}

\author{RUIJIA ZHANG}
\address{Department of Mathematical Sciences, Tsinghua University, Beijing 100084, P.R. China}
\email{\href{mailto:zhangrj17@mails.tsinghua.edu.cn}{zhangrj17@mails.tsinghua.edu.cn}}

%\date{\today}
\keywords{}
\subjclass[2010]{35K55; 53C44}

%%% ----------------------------------------------------------------------

\begin{abstract}
In this paper, by using new auxiliary functions, we study a class of contracting flows of closed, star-shaped hypersurfaces in $\mathbb{R}^{n+1}$ with speed $r^{\frac{\a}{\b}}\s_k^{\frac{1}{\b}}$, where $\s_k$ is the $k$-th elementary symmetric polynomial of the principal curvatures, $\a$, $\b$ are positive constants and $r$ is the distance from points on the hypersurface to the origin. We obtain convergence results under some assumptions of $k$, $\a$, $\b$. When $k\geq2$, $0<\b\leq 1$, $\a\geq \b+k$,  we prove that the $k$-convex solution to the flow exists for all time and converges smoothly to a sphere after normalization, in particular, we generalize Li-Sheng-Wang's result from uniformly convex to $k$-convex. When $k \geq 2$, $\b=k$, $\a \geq 2k$, we prove that the $k$-convex solution to the flow exists for all time and converges smoothly to a sphere after normalization, in particular, we generalize Ling Xiao's result from $k=2$ to $k \geq 2$.
%This result weaken the convexity assumption in an early result of Li, Sheng and Wang. The flow behaves analogously when $k\geq2$, $\b=k$, $\a\geq 2k$, where we generalize a result of Xiao. Besides, we study the flow with other assumptions of $k$, $\a$ and $\b$.
%When the initial hypersurface is mean-convex and $n\geq 2$, $k=1$, $\a\geq \b+1$, the flow exists for all time and converges to a round point. Moreover, for $\a\geq \b+k$, we prove the existence and the convergence of the flow with the initial hypersufaces being strictly convex.
\end{abstract}

\maketitle
%\tableofcontents

%%%%%%%%
\section{introduction}\label{sec:1}
%Convex geometry plays an important role in the study of fully nonlinear partial differential equations.
The classical Brunn-Minkowski theory plays an important role in the study of convex geometry and develops rapidly in recent years. %Mixed-volume,%
Area measures and curvature measures are basic concepts in this theory. The classical Minkowski problem is a problem of prescribing a given $n$th area measure, which was finally settled by Cheng and Yau \cite{CY76}. While, the Christoffel-Minkowski problem is a problem of prescribing a given $k$-th area measure. In \cite{GM03}, Guan and Ma gave a sufficient condition for the existence of convex solution to this problem.

In 1962, Firey \cite{F62} generalized the Minkowski combination to p-sums from $p=1$ to $p\geq1$, which was used for developing a $L_p$ Brunn-Minkowki theory (see \cite{L93}). Later on, Huang, Lutwak, Yang and Zhang \cite{HLYZ16} (also see \cite{LYZ18}) introduced the dual $L_p$-Minkowski problem.
% is the unification of $L_p$ Minkowski problem and dual Minkowski problem.
This sort of problem aims to find a convex body $\Omega$ satisfying the following equation
\begin{align}\label{lp mink}
r^{-(n+1-q)} u^{1-p}\f x \r \det\f D^2u+uI\r=f(x) \quad {\rm on} \ \mathbb{S}^n,
\end{align}
where $f$ is a given positive function on $\mathbb{S}^n$, $u$ is the support function of $\Omega$, $D$ is the Levi-Civita connection on $\mathbb{S}^n$ and $r=\sqrt{u^2 +|D u|^2}$. When $q=n+1$, it is the $L_p$-Minkowski problem (see, e.g., \cite{CW06,LO95, LW13, Zhu15}). When $p=0$, it corresponds to the dual Minkowski problem (see, e.g.,  \cite{HLYZ16, Zhao18}).
%dual Orlicz-Minkowski problem has attracted many researchers, such as in \cite{GM03}, \cite{BF18}
Meanwhile, Guan, Lin and Ma \cite{GLM09} considered the following prescribed $(n-k)$-th curvature measure problem
\begin{align}\label{dual mink}
r^{n+1} u^{-1}\s_k(\kappa) =f,
\end{align}
where $f\in C^2(\mathbb{S}^n)$ %, $\theta=\frac{X}{|X|}$, $X$ is the position vector%
 and $\s_k(\k)$ is defined as the $k$-th elementary symmetric function of principal curvatures $\kappa=(\kappa_1,\cdots,\kappa_n)$ of $\partial\Omega$, i.e.
\begin{equation*}
\s_m(\k)=\sum_{1\leq i_1<\cdots<i_m \leq n}\k_{i_1}\cdots \k_{i_m}.
\end{equation*}
Combining \eqref{lp mink} and \eqref{dual mink}, it's interesting to consider the following prescribed curvature problem
\begin{align}\label{elliptic}
r^{\a} u^{-\b}\s_k(\kappa) =f,
\end{align}
where $\alpha$, $\beta$ are constants.
When $f$ is a constant and $\b=1$, the convex solution to equation \eqref{elliptic} has been well studied (see, e.g., \cite{BCD17, GaoLM18, LSW20a, LSW20b})  . Therefore, it is natural for us to consider the following equation in this paper, i.e. $f\equiv c$ in \eqref{elliptic}
\begin{align}\label{elliptic2}
r^{\a} u^{-\b}\s_k(\kappa) =c.
\end{align}
%Since unit sphere is one of the solutions of \eqref{elliptic}, we consider the uniqueness of the equation.  By using the elliptic approach(e.g.proposition 2.1 of \cite{CHZ19}), we can verify the uniqueness of the convex solution of \eqref{elliptic} when $\a>\b+k$. When $\a=\b+k$ we need a new method to verify the solution is unique up to a dilation.\\

Curvature flows are effective tools to study the above fully nonlinear partial differential equations (see, e.g., \cite{And00, BIS20, CW00, Iva19, LL20, LSW20a, LSW20b, Xiao20}). Let $X_0$ be a closed, star-shaped hypersurface, we will consider the following curvature flow.
Let $X:M^n \times [0,T) \rightarrow\mathbb{R}^{n+1}$ be a family of smooth closed hypersurface, which satisfies
\begin{equation}\label{flow-s}
\left\{\begin{aligned}
\frac{\partial}{\partial t}X(x,t)=& -r^{\frac{\a}{\b}}\s_k(\kappa)^{\frac{1}{\b}} \nu(x,t), \\
X(\cdot,0)=& X_0(\cdot),
\end{aligned}\right.
\end{equation}
where $\b>0$, $1 \leq k \leq n$, $\nu$ is the unit outer normal vector at $X(\cdot,t)$ and $\s_k(\k)$ is the $k$-th mean curvature defined on $M_t=X(\cdot,t)$. For $\b=1$, Li, Sheng and Wang \cite{LSW20b} proved the following result.

\begin{thmA}[\cite{LSW20b}]\label{A}
Let $M_0$ be a smooth, closed, uniformly convex hypersurface in $\mathbb{R}^{n+1}$ enclosing the origin. If $\b=1$, $\a \geq k+1$, the flow \eqref{flow-s} has a unique smooth and uniformly convex solution $M_t$ for all time $t>0$, which converges to the origin. After a proper rescaling $X \rightarrow \phi^{-1}(t) X$, where
\begin{equation}\label{lsw}
	\begin{aligned}
	\phi(t) =& e^{-\gamma t}, \ &{\rm if} \ \a =k+1,\\
	\phi(t) =& \f 1+(\a-k-1)\gamma t \r^{-\frac{1}{\a-k-1}}, \ &{\rm if} \ \a \neq k+1,
	\end{aligned}
\end{equation}	
and $\gamma=C_n^k$,	the hypersurface $\tilde{M_t} = \phi^{-1}(t) M_t$ converges exponentially to a sphere centered at the origin in the $C^\infty$ topology.
\end{thmA}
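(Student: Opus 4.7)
The plan is to reduce the flow to a scalar parabolic PDE on $\mathbb{S}^n$, establish uniform $C^0$ through $C^2$ bounds (with the curvature estimate being the main analytic hurdle), invoke standard parabolic regularity for higher estimates and long-time existence, and finally rescale and use a monotone quantity together with linearization at the sphere to prove exponential $C^\infty$ convergence.

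\textbf{Reduction and lower-order estimates.} Since $M_0$ is uniformly convex and encloses the origin, I would parametrize $M_t$ by its support function $u(\cdot,t)$ on $\mathbb{S}^n$. Writing the principal radii matrix as $b_{ij}=D_iD_ju+u\delta_{ij}$ and expressing $\s_k(\k)$ as the $k$-th symmetric function of the eigenvalues of the Weingarten map $(b^{ij})$, the flow \eqref{flow-s} with $\b=1$ becomes
\begin{equation*}
\partial_t u = -r^{\a}\,\s_k(\k),\qquad r=\sqrt{u^2+|Du|^2}.
\end{equation*}
A centred sphere of radius $R$ evolves by $\dot R=-\binom{n}{k}R^{\a-k}$, whose explicit solution is precisely the function $\phi(t)$ of the statement, forcing the choice $\gamma=\binom{n}{k}$. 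The $C^0$ estimate then follows from the parabolic maximum principle applied at space maxima/minima of $u$ (where $Du=0$ and $b_{ij}=u\delta_{ij}$ reduce the PDE to the sphere ODE), yielding barrier spheres that trap $u$ for all time. A uniform gradient bound on $|Du|$, equivalent to an angle estimate $\langle X,\nu\rangle/|X|\geq c>0$, then follows from star-shapedness and the $C^0$ bounds.

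\textbf{Curvature estimates.} This is the main analytic step. For an upper bound on the speed $F=r^{\a}\s_k(\k)$, I would consider an auxiliary function of the form $Q=\log F - A\log(u-\e)$ (or $\log F + Au$) with $A$ chosen large enough that, at an interior space-time maximum of $Q$, the bad gradient and zeroth-order terms produced by the $r^{\a}$ factor are absorbed into the good terms arising from concavity of $\s_k^{1/k}$ on the Garding cone $\G_k$; the hypothesis $\a\geq k+1$ is exactly what makes the crucial sign work out and delivers $F\leq C$. For the lower bound on principal radii, equivalently for preservation of uniform convexity, one runs the maximum principle on the largest eigenvalue of the Weingarten map coupled to an auxiliary factor involving $u$; combined with the upper bound on $F$ just obtained, this closes the full $C^2$ estimate.

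\textbf{Long-time existence and convergence.} With $C^2$ bounds in hand, the scalar PDE for $u$ is uniformly parabolic on the relevant cone, so Krylov--Safonov yields $C^{2,\a'}$ estimates, Schauder theory bootstraps to $C^\infty$, and the solution extends for all $t>0$. For the asymptotics I would pass to $\tilde X=\phi^{-1}(t)X$ and derive the evolution of $\tilde u=u/\phi$. A quantity monotone along the rescaled flow and stationary only on centred spheres (for instance an appropriate Minkowski/isoperimetric-type integral) gives subsequential smooth convergence of $\tilde M_t$ to a sphere; linearising the normalized equation at this sphere then upgrades convergence to exponential decay in every $C^m$ norm. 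The principal obstacle throughout is the curvature estimate: designing an auxiliary function that simultaneously controls the $r^{\a}$ weight and the fully nonlinear operator $\s_k$ uniformly across the full range $\a\geq k+1$ is the delicate point, and it is precisely where the hypothesis $\a\geq k+1$ is used.
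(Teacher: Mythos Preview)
Theorem~A is not proved in this paper; it is quoted from \cite{LSW20b}. The present paper instead proves the generalisation Theorem~\ref{main-thm-4} (arbitrary $\b>0$), and its argument specialises to $\b=1$ to give a proof of Theorem~A. So the relevant comparison is with Sections~\ref{sec:3}--\ref{sec:5} here.

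Your outline differs from the paper's in two substantive ways. First, you parametrise by the support function; the paper works throughout with the \emph{radial} function $r(\theta,t)$ and $\g=\log r$. For the uniformly convex case of Theorem~A both are legitimate, but the radial picture is what lets the paper run the same $C^0$, $C^1$ estimates (Lemmas~\ref{C0-est}, \ref{c1}) uniformly across the $k$-convex and convex cases. Second, and more importantly, your convergence mechanism is different. You propose a monotone integral functional plus linearisation at the sphere. Neither this paper nor \cite{LSW20b} produces such a functional for the anisotropic speed $r^{\a}\s_k$, and it is not clear one exists. The paper instead shows \emph{directly} (Lemma~\ref{exp-decay}) that $\max_{\mathbb{S}^n}|D\g|^2$ decays exponentially, by revisiting the $C^1$ computation \eqref{C1-final-ineq} and extracting a strictly negative coefficient (this is precisely where $\a\ge k+\b$ enters); barrier spheres then force $r\to\text{const}$, and interpolation upgrades to $C^\infty$. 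So the asymptotic step in your plan is, as written, a gap: you would need to exhibit the monotone quantity, whereas the paper bypasses it entirely.

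Two smaller points. Your description of the curvature estimate omits the key structural input: the lower bound on $\k_i$ in the convex case (Lemma~\ref{conv-c2}) hinges on the \emph{inverse concavity} of $\s_k^{1/k}$ (Lemma~\ref{lem-inv-con}), which is what makes the evolution of the maximal principal radius $\tilde h_1{}^1$ close up. And the hypothesis $\a\ge k+1$ is not used only, or even primarily, in the speed bound as you suggest; it is what makes the $C^0$ and $C^1$ maximum-principle arguments work (Lemmas~\ref{C0-est}, \ref{c1}), appears in the sign of the $(\nabla_1 r)^2$ term in \eqref{tildeh3}, and is essential for the exponential gradient decay in Lemma~\ref{exp-decay}.
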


Notice that the solutions to \eqref{flow-s} are not necessary uniformly convex (i.e. $\kappa_i> 0$, $i=1,\cdots,n$). %The admissible solutions only need to be %
We call a solution  $k$-convex if $\kappa\in\Gamma_k^+$ everywhere on $M_t$ for all $t \in [0,T)$, where
\begin{align*}
\Gamma_k^+=\lbrace\f \kappa_1,\cdots, \kappa_n\r\in\mathbb{R}^n |\s_1(\kappa)>0,\cdots,\s_k(\kappa)>0\rbrace.
\end{align*}
When $k=1$, we also call it mean-convex.
In our first theorem, using a new auxiliary function \eqref{aux-fun-2} in Lemma \ref{Lemma 4.5}, we generalize the assumption of the initial hypersurface in Theorem A from uniformly convex to $k$-convex.
\begin{thm}\label{main-thm-3}
Let $X_0: M^n \to \mathbb \R^{n+1} $ be a smooth, closed, $k$-convex and star-shaped hypersurface in $\mathbb R^{n+1}$. When $k\geq2$, $0<\beta\leq 1$ and $\a\geq k+\b$, the flow \eqref{flow-s} has a unique smooth and $k$-convex solution for all time $t\in [0,\infty)$ and converges smoothly to the origin. After a proper rescaling $X \to \phi^{-1}(t) X$, where $\phi$ is given by \eqref{def-phi}, the hypersurface $\tilde{M_t} = \phi^{-1}(t) M_t$ converges exponentially to a sphere centered at the origin in the $C^\infty$ topology.
\end{thm}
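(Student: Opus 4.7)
The plan is to parametrize $M_t$ by a radial function $\rho:\mathbb{S}^n\times[0,T)\to(0,\infty)$ and rewrite \eqref{flow-s} as a fully nonlinear scalar parabolic equation for $\rho$. Since $\s_k^{1/\b}$ is elliptic precisely on $\Gamma_k^+$, the equation is parabolic as long as the solution remains $k$-convex, so short-time existence in this class follows from standard parabolic theory. The argument then splits into (i) a priori $C^0$--$C^2$ estimates and (ii) asymptotic analysis after the rescaling $\tilde X=\phi^{-1}(t) X$, with $\phi$ chosen as in \eqref{def-phi} so that spheres are fixed points of the rescaled flow: in the critical case $\a=k+\b$ one obtains exponential $\phi$, otherwise a power-type $\phi$, both dictated by the scaling behaviour of $r^{\a/\b}\s_k^{1/\b}$ under dilations.

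Next, the $C^0$ estimate is obtained by evaluating the flow at a spatial maximum and minimum of $\rho$: at such points $|D\rho|=0$, so $r=\rho$ and $\s_k$ reduces to an explicit expression in $\rho$ and its spherical Hessian, and an ODE comparison pinches $\rho_{\min}(t)$ and $\rho_{\max}(t)$ in terms of $\phi$. A standard maximum principle argument on $|D\rho|/\rho$ — or equivalently on $u/r$ — then yields the $C^1$ estimate, i.e.\ uniform preservation of star-shapedness along the flow. These steps should go through with only cosmetic changes from the uniformly convex case.

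The main obstacle is the two-sided curvature estimate in the merely $k$-convex setting. In the uniformly convex case treated by Li-Sheng-Wang one can work on $\mathbb{S}^n$ via the support function, but this is unavailable here, so one must argue intrinsically on $M_t$. The key is the new auxiliary function of Lemma \ref{Lemma 4.5}: a carefully weighted quantity of the form $W=F\, u^{p} r^{q}$, where $F=r^{\a/\b}\s_k^{1/\b}$ and the exponents $p,q$ are tuned to cancel the problematic zero-order terms in the evolution equation. Computing $\partial_t W - \dot F^{ij}\nabla_i\nabla_j W$ and using the Codazzi and Ricci commutator identities together with the Newton-MacLaurin inequalities, the hypotheses $k\geq 2$, $0<\b\leq 1$ and $\a\geq k+\b$ should be exactly what makes the resulting differential inequality non-positive at an interior maximum, forcing a universal upper bound on $F$ and hence on $\s_k$. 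Combined with a lower bound on $F$ derived from the evolution equation for $F$ together with the $C^0$ bounds, this rules out degeneration at $\partial\Gamma_k^+$ and gives the required uniform $C^2$ bound.

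With a uniform $C^2$ bound the flow is uniformly parabolic on a compact subset of $\Gamma_k^+$, and since $\s_k^{1/\b}$ is concave in the principal curvatures (using $0<\b\leq 1$ together with concavity of $\s_k^{1/k}$), Krylov-Safonov and Evans-Krylov theory yield uniform $C^{2,\gamma}$ estimates, upgraded to $C^\infty$ by Schauder bootstrapping; this gives long-time existence on $[0,\infty)$. For the asymptotic convergence one passes to the rescaled flow $\tilde M_t$ and identifies a monotone quantity — for instance the oscillation of $\phi^{-1}\rho$ on $\mathbb{S}^n$, or the deviation of $\tilde W$ from its constant value on a sphere — which satisfies a differential inequality whose linearization at the sphere produces exponential decay of the zeroth mode. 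Standard interpolation then promotes this exponential $C^0$ convergence to $C^\infty$ convergence to a sphere centred at the origin, completing the proof.
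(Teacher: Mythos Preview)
Your overall architecture --- radial parametrization, $C^0$/$C^1$ by maximum principle, then an auxiliary-function $C^2$ estimate, then Krylov and interpolation --- matches the paper's. But the $C^2$ step, which is the whole point of the theorem, has a genuine gap. You propose $W=F\,u^{p}r^{q}$ and claim that a maximum-principle bound on $F=\s_k^{1/\b}$, together with a lower bound on $F$, ``gives the required uniform $C^2$ bound''. In the merely $k$-convex setting this is false: two-sided control of $\s_k$ does \emph{not} bound the individual principal curvatures. For instance with $k=2$, $n=3$, the family $\kappa=(M,c/M,c/M)$ has $\s_2=2c+c^2/M^2$ bounded while $\kappa_1=M\to\infty$. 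So a bound on $F$ yields neither uniform parabolicity nor a $C^2$ bound on the radial graph. (The paper does bound $F$ above and below, in Lemmas~\ref{F}--\ref{F2}, but only as a preliminary input, not as the curvature estimate.) The actual auxiliary function in Lemma~\ref{Lemma 4.5} is $Q=H/(\Phi-a)$ with $\Phi=r^{\a/\b}\s_k^{1/\b}$ and $a=\tfrac12\inf\Phi$; the target is $H$, because for $k\geq 2$ one has $|A|^2<H^2$ (from $2\s_2=H^2-|A|^2>0$), so bounding $H$ controls every $|\kappa_i|$. The dangerous term in $\mathcal L Q$ is $\ddot F^{pq,rs}\nabla_ih_{pq}\nabla_ih_{rs}$, which is handled via Lemma~\ref{liyanyan} (the concavity of $(\s_k/\s_1)^{1/(k-1)}$); after inserting the critical-point relation $\nabla H/H=\nabla\Phi/(\Phi-a)$, the hypothesis $\b\in(0,1]$ is exactly what makes the coefficient of $|\nabla F|^2$ in \eqref{LYY final} non-positive. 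This mechanism is absent from your sketch.

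A secondary error: your Evans--Krylov justification claims $\s_k^{1/\b}$ is concave because $0<\b\leq 1$ and $\s_k^{1/k}$ is concave. But $\s_k^{1/\b}=(\s_k^{1/k})^{k/\b}$ with $k/\b\geq k\geq 2$, and a convex power of a concave function need not be concave; indeed $\s_k^{1/\b}$ is not concave on $\Gamma_k^+$ for $\b<k$. One rewrites the scalar equation so that the genuinely concave function $\s_k^{1/k}$ appears (raise the speed to the power $\b/k$), and then applies Krylov.
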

%We call a hypersurface $k$-convex if the principal curvatures satisfy $\kappa\in\Gamma_k^+$ everywhere.
\begin{rem}
	When $\b=1$, $X_0$ is uniformly convex, Theorem \ref{main-thm-3} generalizes Li-Sheng-Wang's Theorem A.
\end{rem}

In \cite{Xiao20}, Ling Xiao studied the $2$-convex solution to the flow \eqref{flow-s} with $k=2$, $\b=2$ and $\alpha\geq 4$. In fact, she proved
\begin{thmB}[\cite{Xiao20}]\label{B}
Let $M_0$ be a smooth, closed, $2$-convex, star-shaped hypersurface in $\mathbb{R}^{n+1}$. If $k=2$, $\b=2$, $\a \geq 4$, the flow \eqref{flow-s} has a unique smooth star-shaped solution $M_t$ with positive scalar curvature, for all time $t>0$, which converges to the origin.
%\begin{equation}\label{flow-Xiao}
%\left\{\begin{aligned}
%\frac{\partial}{\partial t}X(x,t)=& -r^{\frac{\a}{2}} \s_2(\kappa)^{\frac{1}{2}} \nu, \\
%X(\cdot,0)=& X_0(\cdot),
%\end{aligned}\right.
%\end{equation}
%where $p_2=\f C_n^2 \r^{-1}\s_2(\kappa)$ is the normalized scalar curvature of the hypersurface $M_t$.
After a proper rescaling $X \rightarrow \phi^{-1}(t) X$, where
	\begin{align*}
	\phi(t) =& e^{-\gamma t}, \quad &\a =4,\\
	\phi(t) =& \f 1+\frac{(\a-4)\gamma}{2} t \r^{-\frac{2}{\a-4}}, \quad &\a > 4,
	\end{align*}
	and $\gamma = \sqrt{\frac{n(n-1)}{2}}$, the hypersurface $\tilde{M_t} = \phi^{-1}(t) M_t$ converges exponentially fast to a sphere centered at the origin in the $C^\infty$ topology.
\end{thmB}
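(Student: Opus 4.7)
The plan is to convert the flow \eqref{flow-s} (with $k=2$, $\b=2$, $\a\geq 4$) into a scalar parabolic PDE on $\mathbb{S}^n$ for the radial function $\rho(\theta,t)$, and then carry out the standard four-step program: $C^0$ bounds via barrier comparison, preservation of star-shapedness and $2$-convexity together with gradient estimates, interior $C^2$ estimates via a carefully chosen auxiliary function, and finally long-time existence plus smooth convergence to a sphere after rescaling. Writing $X(\theta,t) = \rho(\theta,t)\theta$, a direct computation gives $\partial_t\rho = -\rho^{\a/2}\,\s_2^{1/2}\,v$ with $v = \sqrt{1+|D\rho|^2/\rho^2}$. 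Round spheres $\rho(t)\equiv f(t)$ are exact solutions with $\dot f = -\sqrt{n(n-1)/2}\,f^{(\a-2)/2}$, whose solution is exactly $\phi(t)$ in the statement. Sandwiching $M_0$ between two such spheres and invoking the parabolic maximum principle yields $C^0$ bounds on $\rho$ matching $\phi$, so the rescaled radial function $\tilde\rho = \rho/\phi$ is uniformly bounded above and away from zero.

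Next I would establish $C^1$ and $C^2$ bounds inside the $2$-convex cone. For the gradient, apply the maximum principle to $|D\log\rho|^2$ (equivalently to the quotient $\langle X,\nu\rangle/|X|$); the hypothesis $\a\geq 4$ produces the correct sign of the reaction term to absorb the bad gradient terms and preserve uniform star-shapedness. For the curvature, I would test an auxiliary function of the form
\[
W \;=\; \log F \;-\; A\log u \;+\; B|X|^2,
\]
where $F = \rho^{\a/2}\s_2^{1/2}$, $u = \langle X,\nu\rangle$, and $A,B$ are sufficiently large constants. Combined with the parabolic maximum principle applied separately to the pinching quantity $\s_2/\s_1^2$ (to prevent $\k$ from leaving $\G_2^+$) and to $F/u$ (to bound $F$ two-sidedly), this should give a uniform bound on the largest principal curvature together with a uniform lower bound on $\s_2$.

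With $C^0$, $C^1$ and two-sided control on $\s_2$ in hand, the speed $\s_2^{1/2}$ is concave on $\G_2^+$ and the linearised operator is uniformly elliptic, so Krylov--Safonov delivers $C^{2,\a}$ estimates and Schauder theory propagates them to $C^\infty$, giving long-time existence. For the asymptotics one passes to the rescaled flow $\tilde X = \phi^{-1}X$, for which the round unit sphere is a stationary solution, and exploits monotonicity of a scale-invariant functional (for instance an entropy of the form $\int_{\mathbb{S}^n}\rho^{\,n+\a-2}\,d\theta$, up to normalisation) to force $\tilde\rho$ to converge to a constant. Exponential $C^\infty$ convergence then follows from linearising the rescaled flow at the unit sphere, whose spectrum has a positive gap once translations are accounted for; the hypothesis $\a\geq 4$ is again what makes this gap strictly positive.

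The main obstacle will be the $C^2$ estimate: the speed $\s_2^{1/2}$ is only degenerate elliptic at $\partial\G_2^+$, so the absence of uniform convexity forces the auxiliary function $W$ to be engineered so that the good terms coming from the $r^{\a/2}$ factor dominate simultaneously the bad terms arising from the failure of uniform convexity and from the non-concavity of the prefactor in $F$. A secondary technical point is ensuring the pinching quantity $\s_2/\s_1^2$ remains bounded below along the flow, for which Gårding's inequality on $\G_2^+$ and the precise structure of the evolution of $\s_2$ under a speed containing the position vector will be the key ingredients.
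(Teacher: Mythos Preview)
Your $C^0$, $C^1$, and two-sided speed bounds are essentially what the paper does (Lemmas \ref{C0-est}--\ref{F2}); note that Theorem B is quoted from \cite{Xiao20} and the paper re-proves it only as the special case $k=\b=2$ of Theorem \ref{main-thm-2}. The genuine gap in your proposal is the $C^2$ estimate.

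The function $W=\log F-A\log u+B|X|^2$ controls the \emph{speed} $F=r^{\a/2}\s_2^{1/2}$, not the largest principal curvature; a bound on $\s_2$ does not bound $\k_{\max}$ absent uniform convexity. You try to close this via preservation of the pinching $\s_2/\s_1^2$, but you give no mechanism, and there is none available: the anisotropic factor $r^{\a/\b}$ injects position-dependent terms of indeterminate sign into the evolution of any scale-invariant curvature ratio, so G{\aa}rding-type inequalities alone cannot keep $\s_2/\s_1^2$ bounded below. This is precisely the obstacle the paper flags as ``the main difficulty'' and the reason Xiao's original argument for $k=2$ required a special device.

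The paper's resolution is different and specific. Since $\s_2>0$ forces $|A|^2<H^2$, it suffices to bound $H$; the paper tests $Q=H/(\Phi-a)$ with $\Phi=r^{\a/\b}F$ and $a=\tfrac12\inf\Phi$ (Lemma \ref{Lemma 4.5}). The second-order term $\ddot F^{pq,rs}\nabla_ih_{pq}\nabla_ih_{rs}$ is handled via the concavity of $(\s_k/\s_1)^{1/(k-1)}$ (Lemma \ref{liyanyan}); after substituting the critical-point relation $\nabla H/H=\nabla\Phi/(\Phi-a)$, the coefficient of $|\nabla F|^2$ becomes strictly negative exactly when $\b\in(0,1]\cup\{k\}$, which absorbs all gradient terms and yields $\mathcal{L}Q\leq -cH^2+CH+C$. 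Your proposal contains no analogue of this step. For the asymptotics the paper also avoids entropy and linearization: Lemma \ref{exp-decay} shows directly that $\max_{\mathbb{S}^n}|D\log r|^2$ decays exponentially from its own evolution inequality, and interpolation upgrades this to $C^\infty$ convergence.
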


%\begin{rem}
%$\iota$ in Theorem B is the same as $\frac{\a}{\b}$ in flow \eqref{flow-s}.
%\end{rem}
%\begin{rem}
%	Notice that %in Theorem B, %
	%the speed of flow \eqref{flow-Xiao} is the same as flow \eqref{flow-s} up to a constant multiples. Although flow \eqref{flow-Xiao} is different from flow \eqref{flow-s} after normalizing, by employing exactly the same approach we find that these two flows behave similar. So considering the case in Theorem B is the same as analysing our flow \eqref{flow-s} under the assumption of $\b=k$ and $k=2$.
%	the speed functions of flow \eqref{flow-s} and \eqref{flow-Xiao} are equal up to a multiplicative constant, where we assume $k =\b=2$ in \eqref{flow-s}. We can easily find that they have the same convergence result.
%\end{rem}
The main difficulty to prove Theorem B is the $C^2$ estimates of flow \eqref{flow-s}. When $k=2$, Ling Xiao \cite{Xiao20} introduced a  technique to give the bounds of the principal curvatures under the normalized flow of \eqref{flow-s}. However, Ling Xiao's technique does not work for $k \geq 3$. %, to analyze the $k$-convex solution to \eqref{flow-s} directly.
In this paper, by using the new auxiliary function \eqref{aux-fun-2} in Lemma \ref{Lemma 4.5}, we generalize Ling Xiao's Theorem B from $k=2$ to $2 \leq  k \leq n$.
%We first prove the following result.
% to case (1) of Theorem \ref{main-thm-3} and we weaken the uniformly convex assumption of Theorem A to $k$-convex in case (2) of Theorem \ref{main-thm-3}. By using an analogous approach in \cite{LSW20b}.
\begin{thm}\label{main-thm-2}
Let $X_0: M^n \to \mathbb \R^{n+1} $ be a smooth, closed, $k$-convex and star-shaped hypersurface in $\mathbb R^{n+1}$. Assume $\beta=k$, $k\geq2$ and $\a\geq 2k$, the flow \eqref{flow-s} has a unique smooth and $k$-convex solution for all time $t\in [0,\infty)$ and converges smoothly to the origin. After a proper rescaling $X \to \phi^{-1}(t) X$, where $\phi$ is given by \eqref{def-phi}, the hypersurface $\tilde{M_t} = \phi^{-1}(t) M_t$ converges exponentially to a sphere centered at the origin in the $C^\infty$ topology.
\end{thm}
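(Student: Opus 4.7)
The strategy parallels the proof of Theorem~\ref{main-thm-3}, with the decisive ingredient being the new auxiliary function from Lemma~\ref{Lemma 4.5}. First I would write $X(\cdot,t) = \rho(\xi, t) \xi$ for $\xi \in \mathbb{S}^n$, reducing \eqref{flow-s} to a scalar parabolic equation for the radial function $\rho$ on $\mathbb{S}^n$, and then pass to the rescaled flow $\tilde X = \phi^{-1}(t) X$ with $\phi$ as in \eqref{def-phi}. The form of $\phi$ is forced by requiring the unit sphere to be a stationary solution of the normalized flow: with $\b = k$ the speed $r^{\a/k}\s_k^{1/k}$ is homogeneous of degree $\a/k - 1$ in $X$, and matching this scaling determines both the ODE satisfied by $\phi$ and the constant $\gamma$ that appears in \eqref{def-phi}.

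\textbf{$C^0$, $C^1$ estimates and preservation of $k$-convexity.} Applying the maximum principle to $\rho$ at its spatial maximum and minimum under the normalized flow should yield $0 < c_1 \leq \rho \leq c_2 < \infty$: the hypothesis $\a \geq 2k$ is exactly what ensures the driving term has the correct sign so that $\max \rho$ does not blow up and $\min \rho$ does not vanish. A standard gradient estimate applied to a function of the form $|D\rho|^2 / \rho^{2(1+\e)}$ then gives a uniform $C^1$ bound, and in particular the support function $u = \langle X, \nu\rangle$ stays bounded below by a positive constant, so the hypersurfaces remain uniformly star-shaped. Preservation of $k$-convexity follows from the tensor maximum principle applied to the Weingarten tensor, together with the concavity of $\s_k^{1/k}$ on $\G_k^+$, exactly as in the proof of Theorem~\ref{main-thm-3}.

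\textbf{$C^2$ estimate — the main obstacle.} This is the crux of the argument, and it is here that Xiao's technique for $k=2$ fails when $k \geq 3$, since it exploits identities specific to $\s_2$. An upper bound on the flow speed $F := r^{\a/k}\s_k^{1/k}$ should follow from a direct maximum principle argument on the rescaled speed, using $\a \geq 2k$ to guarantee the requisite sign. The hard part is an upper bound on the largest principal curvature $\k_{\max}$. I would use the auxiliary function~\eqref{aux-fun-2} introduced in Lemma~\ref{Lemma 4.5}, which combines $\log \k_{\max}$ (or a symmetric proxy such as $\log(\s_k/\s_{k-1})$) with carefully chosen powers of $\rho$ and $u$. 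The purpose of the extra weights is to produce, in the evolution equation of the auxiliary quantity, negative terms that, via the Codazzi relations and Newton--MacLaurin inequalities, absorb the indefinite third-order terms arising from differentiating $\k_{\max}$ twice. Once $\k_{\max} \leq C$ is in hand, combining with a positive lower bound on $F$ (hence on $\s_k$) yields a uniform two-sided pinching of the principal curvatures inside $\G_k^+$.

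\textbf{Higher regularity, long-time existence and convergence.} With uniform $C^2$ bounds and non-degenerate parabolicity, and given the concavity of $\s_k^{1/k}$ on $\G_k^+$, Krylov--Safonov and Evans--Krylov yield uniform $C^{2,\a}$ estimates; parabolic bootstrap then gives uniform $C^\infty$ bounds and hence long-time existence of the normalized flow. For the asymptotic statement, I would aim to show exponential decay of $\mrm{osc}_{\mathbb{S}^n} \rho := \max \rho - \min \rho$: evaluating the normalized equation at spatial max and min yields a differential inequality of the form $(\max \rho - \min \rho)'(t) \leq -c(\max \rho - \min \rho)(t)$, where the positive gap $c$ is supplied by $\a \geq 2k$ together with $\b = k$. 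The uniform higher-order estimates then promote exponential decay of oscillation to exponential convergence to the unit sphere in the $C^\infty$ topology.
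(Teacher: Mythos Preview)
Your outline is broadly correct on the $C^0$, $C^1$, and higher-regularity steps, and your global strategy matches the paper's. However, your description of the central $C^2$ step contains a genuine gap: you have guessed the wrong auxiliary function and the wrong mechanism.

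The function \eqref{aux-fun-2} in Lemma~\ref{Lemma 4.5} is not $\log\k_{\max}$ with weights in $\rho$ and $u$; it is
\[
Q=\frac{H}{\Phi-a},\qquad \Phi=r^{\a/\b}\s_k^{1/\b},\quad a=\tfrac12\inf\Phi.
\]
The point is to bound $H$, not $\k_{\max}$: since $k\geq 2$ and $\k\in\Gamma_k^+$ one has $|A|^2=H^2-2\s_2<H^2$, so an upper bound on $H$ already controls every $|\k_i|$ without any two-sided pinching. The mechanism is also different from what you describe. The second-derivative terms $\ddot F^{pq,rs}\nabla_ih_{pq}\nabla_ih_{rs}$ are not absorbed by Codazzi tricks and radial weights; they are controlled via the concavity of $(\s_k/H)^{1/(k-1)}$ (Lemma~\ref{liyanyan}, the Guan--Li--Li inequality), which bounds them by a quadratic form in $\nabla F/F$ and $\nabla H/H$. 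At a maximum of $Q$ the critical equation $\nabla H/H=\nabla\Phi/(\Phi-a)$ eliminates $\nabla H$, leaving a coefficient in front of $|\nabla F|^2$ that is a quadratic in $a/(\Phi-a)$ with constant term $-\frac{(1-\b)(k-\b)}{\b(k-1)}$. This vanishes precisely when $\b=k$ (or $\b=1$), which is what singles out the hypothesis $\b=k$ in Theorem~\ref{main-thm-2} and replaces Xiao's $k=2$ identities. Your proposed $\log\k_{\max}$-with-weights scheme does not exhibit this structure, and it is unclear it could close without it.

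A smaller point: the exponential convergence in the paper is obtained not from an oscillation inequality for $\max\rho-\min\rho$, but from exponential decay of $|D\g|^2$ at its spatial maximum (Lemma~\ref{exp-decay}); the oscillation decay is then a consequence. Your approach here is plausible but would need a separate argument when $\a=\b+k$, whereas the gradient route works uniformly.
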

\begin{rem}
	When $k=2$, Theorem \ref{main-thm-2} reduces to Ling Xiao's Theorem B.
\end{rem}
%\begin{rem}
	%In Theorem \ref{main-thm-3}, we extend Theorem B from scalar curvature $\sigma_2(\k)$ to general $\sigma_k(\k)$ in case (1). Moreover, in case (2), we weaken the assumption of Theorem A from uniformly convex to $k$-convex.
%\end{rem}
In \cite{LSW20b}, the authors also studied the flow with mean-convex initial hypersurface and obtained
\begin{thmC}[\cite{LSW20b}]\label{C}
Let $M_0$ be a smooth, closed and weakly mean-convex hypersurface in $\mathbb{R}^{n+1}$. Suppose that $M_0$ is star-shaped with respect to the origin. Assume $k=1$, $\b=1$ and $\a \geq 2$, then the flow \eqref{flow-s} has a unique smooth solution $M_t$ for all time $t>0$, and $M_t$ converges to the origin. After a proper rescaling $X \to \phi^{-1}(t) X$, where $\phi$ is given by \eqref{lsw}, the hypersurface $\tilde{M_t} = \phi^{-1}(t)M_t$ converges exponentially to a sphere centered at the origin in the $C^\infty$ topology.
\end{thmC}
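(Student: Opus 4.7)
I would rewrite the flow as a quasilinear parabolic scalar equation for the radial graph function $\rho(\theta,t)$ on $\mathbb{S}^n$ defined by $X(\theta,t)=\rho(\theta,t)\theta$, and then carry out the standard $C^0\to C^1\to C^2\to C^\infty$ bootstrap. The crucial simplification compared with Theorems A, B and Theorems \ref{main-thm-3}, \ref{main-thm-2} is that, with $k=\b=1$, the speed $r^{\a}\s_1=r^{\a}H$ is \emph{linear} (hence both concave and convex) in the second derivatives of $\rho$; the resulting scalar equation is quasilinear with coefficients depending only on $\rho$ and $D\rho$. Thus the heavy Evans--Krylov machinery is not needed: once $C^1$ bounds are in hand the equation is uniformly parabolic, De Giorgi--Nash--Moser yields $C^{1,\gamma}$, and Schauder theory bootstraps to $C^\infty$.

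For the $C^0$ estimate I would compare with concentric evolving spheres: a sphere of radius $R$ centered at the origin satisfies $\dot R=-nR^{\a-1}$ under \eqref{flow-s}, so sandwiching $M_0$ between two such spheres gives uniform bounds $r_{\min}(t)\le r(\cdot,t)\le r_{\max}(t)$ with precisely the decay rate of \eqref{lsw}. For the $C^1$ estimate (equivalently preservation of star-shapedness with a quantitative lower bound on $\langle X,\nu\rangle/r$), I would apply the maximum principle to an auxiliary function of the form $|D\rho|^2/\rho^2$, using the favorable sign produced by $\a\ge 2$ at an interior spatial maximum.

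Weak mean-convexity is preserved by the maximum principle applied to the evolution of $H$: a direct computation shows $\partial_t H$ equals a uniformly elliptic linear operator in $H$ plus a nonnegative reaction term (up to the factor $r^\a$), so $H\ge 0$ initially yields $H\ge 0$ for all $t>0$. The main obstacle, and the most delicate step, is the \emph{upper} bound on $H$ (the $C^2$ estimate). Because we have only weak mean-convexity, individual principal curvatures are not directly controlled; fortunately, since the speed is linear in $H$, it suffices to bound $H$ itself. I would apply the maximum principle to a composite quantity of the form $H\cdot\varphi(r,u)$ with $u=\langle X,\nu\rangle$ and $\varphi$ chosen so that the bad curvature-squared terms at a maximum are absorbed by the good terms produced by differentiating $r^\a$; this is analogous in spirit to the auxiliary function \eqref{aux-fun-2} used elsewhere in the paper, but simpler because $H$ is a trace rather than $\s_k$. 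The hypothesis $\a\ge 2$ is precisely what makes the dominant second-order term in the evolution of this composite have the favorable sign.

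With uniform $C^2$ bounds and the quasilinear structure, standard parabolic theory delivers $C^\infty$ bounds and long-time existence via the usual continuation criterion. For convergence after the rescaling $\tilde X=\phi^{-1}(t)X$ with $\phi$ as in \eqref{lsw}, the normalized flow admits round spheres centered at the origin as stationary solutions, and linearizing the normalized operator at such a sphere produces a self-adjoint elliptic operator on $\mathbb{S}^n$ whose spectrum, after modding out the translation mode, has a strictly positive gap (concretely, the spherical-harmonic modes $j\ge 2$ all contribute strictly negative eigenvalues to the decay). I would then prove exponential decay of $\max\tilde\rho-\min\tilde\rho$ by a maximum-principle argument (and of the $L^2$ oscillation by a weighted energy estimate), and upgrade from $C^0$ to exponential decay in every $C^\ell$ via interpolation with the uniform higher-order bounds.
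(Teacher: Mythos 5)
Your broad outline agrees with the paper's (and Li--Sheng--Wang's) reading of this case: rewrite the flow as a scalar parabolic equation for the radial function, prove $C^0$ and $C^1$ bounds (Lemmas \ref{C0-est} and \ref{c1} here, both by a maximum-principle argument that exploits $\a\ge k+\b$), and then observe that because $k=\b=1$ the speed $r^{\a-1}\sqrt{r^2+|Dr|^2}\,H$ is \emph{linear} in the second derivatives, so the scalar equation is quasilinear and long-time existence follows from $C^0$, $C^1$ estimates plus standard quasilinear parabolic regularity. The paper says exactly this in the introduction: ``In Theorem C, $k=1$ and $\b=1$, the flow \eqref{flow-s} relates to a quasilinear parabolic equation and the long time existence follows from $C^0$, $C^1$ estimates.'' For the asymptotics, the paper proves exponential decay of $|D\g|$ directly by a maximum principle (Lemma \ref{exp-decay}, which gets a good negative term from $\a\ge\b+k$), rather than by linearization and a spectral gap; both routes are viable.

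The issue is an internal inconsistency in your middle paragraph, which then leads you toward a genuine error. After correctly explaining that the quasilinear structure makes Evans--Krylov unnecessary and that $C^1$ bounds bootstrap to $C^\infty$, you turn around and declare that ``the main obstacle, and the most delicate step, is the upper bound on $H$ (the $C^2$ estimate).'' For $\b=1$ this step simply is not needed: once $r$ and $|Dr|$ are bounded, the equation is uniformly parabolic with $C^1$-controlled coefficients, De Giorgi--Nash--Moser/Krylov--Safonov gives $C^{1,\gamma}$, and Schauder bootstraps. You should commit to one story.

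Worse, the way you propose to obtain the $C^2$ estimate is wrong in the $k=1$ setting. You say ``since the speed is linear in $H$, it suffices to bound $H$ itself'' and suggest an auxiliary function $H\cdot\varphi(r,u)$, ``analogous in spirit to \eqref{aux-fun-2}.'' But \eqref{aux-fun-2}, $Q=H/(\Phi-a)$, works in Lemma \ref{Lemma 4.5} only because for $k$-convex hypersurfaces with $k\ge2$ one has $\s_2>0$, hence $H^2=2\s_2+|A|^2>|A|^2$, so an upper bound on $H$ automatically bounds $|A|$. That implication fails completely for merely mean-convex (let alone weakly mean-convex) hypersurfaces: $\k_1$ and $\k_2$ can both blow up while $\k_1+\k_2$ stays bounded. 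This is precisely why the paper's auxiliary function for the genuinely fully nonlinear case $k=1$, $\b\ne1$ (Theorem \ref{main-thm-1}, Lemma \ref{Lem 4.3}) is built from $\log|A|^2$, namely $Q=\log|A|^2-2B\log(\Phi-a)$, and not from $H$. So if you do take the $C^2$-estimate route you must bound $|A|$, not $H$; but for Theorem C as stated ($\b=1$) the cleanest and intended argument is to skip that step entirely and rely on quasilinear regularity.

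Two smaller remarks. First, you should note explicitly that weak mean-convexity at $t=0$ upgrades to strict mean-convexity for $t>0$ by the strong maximum principle, so the star-shapedness/$C^1$ machinery applies on $(0,T)$. Second, your $C^0$ comparison with evolving spheres is fine and is essentially what Lemma \ref{C0-est} does via the maximum principle on $\g_{\max}$, $\g_{\min}$.
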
	
%For $k=1$, using a different $C^2$ estimate, we also obtain the convergence for mean-convex and star-shaped initial hypersurfaces.
In Theorem C, $k=1$ and $\b=1$, the flow \eqref{flow-s} relates to a quasilinear parabolic equation and the long time existence follows from $C^0$, $C^1$ estimates. However, if $k=1$ and $\b \neq 1$, the corresponding equation is fully nonlinear. In our paper, using a new auxiliary function \eqref{aux-fun-1} in Lemma \ref{Lem 4.3}, we give a proof of $C^2$ estimates of the normalized flow of \eqref{flow-s} and then obtain the following theorem.
\begin{thm}\label{main-thm-1}
Assume $n\geq 2$, let $X_0: M^n \to \mathbb \R^{n+1} $ be a smooth, closed, mean-convex and star-shaped hypersurface in $\mathbb R^{n+1}$. Assume $k=1$, $\b>0$, $\a\geq \b+1$ the flow \eqref{flow-s} has a unique smooth and mean-convex solution for all time $t\in [0,\infty)$ and it converges smoothly to the origin. After a proper rescaling $X \to \phi^{-1}(t) X$, where $\phi$ is given by \eqref{def-phi}, the hypersurface $\tilde{M_t} = \phi^{-1}(t) M_t$ converges exponentially to a sphere centered at the origin in the $C^\infty$ topology.
\end{thm}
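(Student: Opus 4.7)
I would represent the evolving hypersurfaces as graphs over $\mathbb{S}^n$ via the radial function $\rho(x,t)$ with $X(x,t)=\rho(x,t)x$, so that with $k=1$ the flow \eqref{flow-s} becomes a scalar parabolic equation of the form $\partial_t\rho=-\rho^{\alpha/\beta}\sqrt{1+|D\rho|^2/\rho^2}\,H^{1/\beta}$, where $H$ is a quasilinear second-order operator in $\rho$. Because $\beta$ is arbitrary positive, the power $H^{1/\beta}$ makes the equation genuinely fully nonlinear when $\beta\neq 1$; this is exactly the new difficulty compared with the quasilinear case $\beta=1$ handled in Theorem C. The plan is the standard curvature-flow pipeline: derive uniform $C^0$, $C^1$ and $C^2$ bounds on the rescaled flow $\tilde X=\phi^{-1}(t)X$, bootstrap to higher regularity, conclude long-time existence by continuation, and finally prove exponential convergence to a sphere.

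\textbf{$C^0$ and $C^1$ estimates.} The scaling factor $\phi(t)$ in \eqref{def-phi} should emerge from the ODE satisfied by a concentric sphere: a sphere of radius $r$ contracts at speed $n^{1/\beta}r^{(\alpha-1)/\beta}$, so $\dot r=-n^{1/\beta}r^{(\alpha-1)/\beta}$. Comparing $\rho_{\max}(t)$ and $\rho_{\min}(t)$ with this ODE via the parabolic maximum principle yields two-sided bounds $c\phi(t)\leq\rho\leq C\phi(t)$. The gradient bound, equivalently a positive lower bound on $\langle X,\nu\rangle/|X|$, is obtained by running the maximum principle on $|D\rho|^2/\rho^2$; the structural term coming from the nonnegative speed $\rho^{\alpha/\beta}H^{1/\beta}$ has the right sign to preserve star-shapedness of the normalized hypersurface.

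\textbf{The $C^2$ estimate is the main obstacle.} Even with $k=1$, once $\beta\neq 1$ the evolution equation for $H$ under \eqref{flow-s} contains cross terms from the nonlinear dependence of the speed on $H$, and the usual test function $H$ (or $\log H$) does not produce a sign at a maximum that can absorb these terms. The key ingredient is the auxiliary function introduced in \eqref{aux-fun-1} and Lemma \ref{Lem 4.3}, which mixes a suitable power of the normalized speed $\rho^{\alpha/\beta}H^{1/\beta}$ with powers of the distance and the support function. I would compute the evolution equation of this function on the rescaled flow, evaluate at an interior maximum, and use the hypothesis $\alpha\geq\beta+1$ to dominate the bad gradient and zero-order terms, thereby extracting the upper bound $H\leq C$. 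The strict positivity $H\geq c>0$, needed to keep the equation uniformly parabolic, is obtained by running the maximum principle from below on a similar barrier for $\rho^{\alpha/\beta}H^{1/\beta}$, again crucially using $\alpha\geq\beta+1$ together with the previously established $C^0$ and $C^1$ bounds.

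\textbf{Higher regularity and exponential convergence.} Once $0<c\leq H\leq C$, the scalar equation is uniformly parabolic in $\rho$; since $H$ is linear and $t\mapsto t^{1/\beta}$ is concave for $\beta\geq 1$ and convex for $\beta\leq 1$, the operator is concave or convex in $D^2\rho$, and Evans-Krylov yields $C^{2,\alpha}$ bounds, followed by Schauder estimates for all higher derivatives. Standard continuation gives a solution on $[0,\infty)$. For the asymptotic behavior I would monitor the oscillation of the normalized speed $Q=\rho^{\alpha/\beta}H^{1/\beta}$: its evolution equation at interior extrema produces a differential inequality of the form $\partial_t(Q_{\max}-Q_{\min})\leq -\delta(Q_{\max}-Q_{\min})$ with $\delta>0$ because $\alpha\geq\beta+1$, forcing $Q$ to become constant exponentially fast. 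Combined with the uniform higher-order estimates and interpolation, this yields exponential $C^\infty$ convergence of $\tilde M_t$ to a round sphere centered at the origin.
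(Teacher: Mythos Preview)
Your $C^0$ and $C^1$ outline matches the paper (Lemmas~\ref{C0-est} and~\ref{c1}), and you are right that the positive lower bound on $H$ comes from a maximum-principle argument on $\Phi=r^{\alpha/\beta}H^{1/\beta}$ (Lemma~\ref{F}).

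The substantive gap is in the $C^2$ step. You describe the auxiliary function \eqref{aux-fun-1} as ``a suitable power of the normalized speed \dots\ with powers of the distance and the support function'' whose purpose is to produce $H\le C$. That is not what \eqref{aux-fun-1} is and not what it is for. The function is
\[
Q=\log|A|^2-2B\log(\Phi-a),\qquad B=1-\tfrac{a}{2c},\ a=\tfrac12\inf\Phi,\ c=\sup\Phi,
\]
and the whole point of Lemma~\ref{Lem 4.3} is to bound $|A|^2=\sum_i\kappa_i^2$, i.e.\ \emph{all} principal curvatures, not merely $H$. The upper bound on $H$ (equivalently on $F=H^{1/\beta}$) is already obtained earlier, in Lemma~\ref{F2}, via the different test function $\log\Phi-\log(u-a)$; so your stated target for Lemma~\ref{Lem 4.3} duplicates a result already in hand and misses the actual content. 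Since the hypersurfaces are only mean-convex, a two-sided bound on $H$ does \emph{not} control individual $\kappa_i$ (one can have $\kappa_1=M$, $\kappa_2=-M+1$ with $H=1$), and the Krylov $C^{2,\alpha}$ estimate the paper invokes requires a genuine $C^2$ (i.e.\ $|\kappa_i|$) bound as input when the operator depends on $Du$, $u$ and $x$ as well as $D^2u$. The delicate part of Lemma~\ref{Lem 4.3} is precisely getting the gradient terms to close: the specific constant $B=1-\tfrac{a}{2c}$ is chosen so that the coefficient $2B\frac{\Phi}{\Phi-a}-2$ in \eqref{Bestimate} is uniformly positive, which is what kills the leading $|A|^2$ term; this mechanism is absent from your description.

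Your proposed convergence argument (exponential decay of the oscillation of $\Phi$) is different from the paper's. The paper instead shows directly in Lemma~\ref{exp-decay} that $|D\varphi|^2=|Dr|^2/r^2$ decays exponentially, by revisiting the $C^1$ computation \eqref{C1-final-ineq} and extracting a strictly negative coefficient (from the term $-\tfrac{\alpha-\beta-k}{\beta}\cdots|D\varphi|^2$ when $\alpha>\beta+k$, and from the remaining good term when $\alpha=\beta+k$). This immediately gives $C^1$ convergence to a sphere, and interpolation against the uniform higher-order bounds finishes. Your oscillation-of-speed route is plausible, but the differential inequality $\partial_t(Q_{\max}-Q_{\min})\le-\delta(Q_{\max}-Q_{\min})$ does not drop out of the evolution equation \eqref{phi} for $\Phi$ without further work, since \eqref{phi} contains the sign-indefinite term $r^{\alpha/\beta}\Phi\dot F^{ij}h_i{}^lh_{lj}$; the paper's gradient-decay argument avoids this issue entirely.
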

\begin{rem}
	When $k=1$, $\b=1$, Theorem \ref{main-thm-1} reduces to Theorem C.
\end{rem}
	%\begin{rem}
	%In \cite[Theorem 1.2]{LSW20b}, the authors has studied the case $\b=1$ for weakly mean-convex initial hypersurface.
	%We can also solve the condition $n=1$, $\a>\b+1$. When $n=1$, $\a=\b+1$,we need analyse the asymptotic 	estimates.
	%\end{rem}
In our paper, we also generalize Theorem A from $\b=1$ to $\b>0$ with uniformly convex initial hypersurface.
\begin{thm}\label{main-thm-4}
	Let $M_0$ be a smooth, closed, uniformly convex hypersurface in $\mathbb{R}^{n+1}$($n\geq2$) enclosing the origin. If $\beta>0$, $\a \geq \beta+k$, the flow \eqref{flow-s} has a unique smooth and uniformly convex solution $M_t$ for all time $t\in [0,\infty)$ and it converges smoothly to the origin. After a proper rescaling $X \to \phi^{-1}(t) X$, where $\phi$ is given by \eqref{def-phi}, the hypersurface $\tilde{M_t} = \phi^{-1}(t) M_t$ converges exponentially to a sphere centered at the origin in the $C^\infty$ topology.
\end{thm}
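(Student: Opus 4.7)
The plan is to parametrize the flow by the support function on $\mathbb{S}^n$, rescale so that the unit sphere becomes a stationary solution, and then establish uniform $C^0$, $C^1$, $C^2$ estimates for the normalized flow; long-time existence and smooth convergence will then follow from standard parabolic regularity together with a linearization argument at the sphere. Since $M_0$ is uniformly convex, for a short time $M_t$ is strictly convex and may be written as the graph of $u(\xi,t)=\langle X,\xi\rangle$ over $\mathbb{S}^n$. Writing $W=\nabla^2 u + uI$ so that the principal radii $\tau_i=1/\kappa_i$ are the eigenvalues of $W$, the flow \eqref{flow-s} becomes
\[
\partial_t u = -r^{\alpha/\beta}\left(\frac{\sigma_{n-k}(W)}{\sigma_n(W)}\right)^{1/\beta},
\]
where $r=\sqrt{u^2+|\nabla u|^2}$. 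After rescaling $\tilde u=\phi^{-1}(t)u$ with $\phi$ as in \eqref{def-phi}, the unit sphere is stationary for the normalized equation.

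The $C^0$ estimate I would obtain by applying the maximum principle to $\tilde u_{\max}(t)$ and $\tilde u_{\min}(t)$. At an extremum $\nabla\tilde u=0$, so $r=\tilde u$ and the reduced ODE is comparable to the ODE for a shrinking sphere; the hypothesis $\alpha\ge\beta+k$ is precisely what makes this comparison go in the correct direction and gives $0<c_1\le \tilde u\le c_2<\infty$ uniform in $t$. The $C^1$ estimate reduces to bounding $r/u$ from above, equivalently the angle between the position vector and the outer unit normal; this follows by applying the maximum principle to the evolution equation of $|\nabla u|^2/(2u^2)$ and using uniform convexity in an essential way.

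The main obstacle is the two-sided $C^2$ estimate, which must preserve strict convexity. To bound the largest principal radius I would test an auxiliary function of the form $Q(\xi,t)=\log \tau_{\max} + A\log \tilde u + B h(r)$ for suitable $A,B>0$ and a carefully chosen $h$, in the spirit of Lemma \ref{Lemma 4.5}. At an interior maximum the conditions $\nabla Q=0$ and $\nabla^2 Q\le 0$, combined with the Codazzi identities and the concavity structure of $(\sigma_{n-k}/\sigma_n)^{1/\beta}$ in $W$, yield an algebraic inequality in which the leading negative $\tau_{\max}$-term beats the positive cross terms generated by the $r^{\alpha/\beta}$ weight as soon as $\alpha\ge\beta+k$, giving $\tau_{\max}\le C$. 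A symmetric argument on $-\log\tau_{\min}$ (equivalently on $\log\kappa_{\max}$) gives $\tau_{\min}\ge c>0$. The delicate point, and the reason the passage $\beta=1\to\beta>0$ is not automatic from Theorem A, is that the exponent $1/\beta$ introduces additional commutator terms in the evolution of $\log\tau_{\max}$ that must be absorbed through the free parameters $A,B$ and the shape of $h$.

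Once uniform two-sided $C^2$ bounds are in hand the equation is uniformly parabolic and the speed is concave (after a Legendre change) in $\nabla^2 u$, so Krylov-Safonov and Schauder give uniform $C^{j,\alpha}$ bounds for every $j$, hence long-time existence. To identify the limit I would establish monotonicity of an appropriately scaled integral quantity such as $\int_{\mathbb{S}^n} u^p\,d\sigma$ for a suitable $p=p(\alpha,\beta,k)$, which along the normalized flow forces any smooth limit to satisfy the elliptic equation \eqref{elliptic2} with $f\equiv c$; uniqueness of solutions to that equation then implies the limit is a sphere centered at the origin. Exponential $C^\infty$ convergence then follows from linearizing the normalized equation at the unit sphere, using the known spectrum of $\Delta_{\mathbb{S}^n}$ on spherical harmonics, and noting that the center-of-mass constraint rules out the translation eigenmodes.
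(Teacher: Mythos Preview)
Your parametrization via the support function is a legitimate alternative to the paper's radial-graph approach, and your $C^0$ and $C^1$ sketches are plausible. The serious gap is in the $C^2$ estimate, which is the heart of the theorem.

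You invoke ``the concavity structure of $(\sigma_{n-k}/\sigma_n)^{1/\beta}$ in $W$'' and cite Lemma~\ref{Lemma 4.5} as the model. Neither is apt. Lemma~\ref{Lemma 4.5} treats the $k$-convex case only for $\beta\in(0,1]\cup\{k\}$; its auxiliary function $H/(\Phi-a)$ and the concavity of $(\sigma_k/\sigma_1)^{1/(k-1)}$ produce a coefficient on the gradient term that is only controllable in that range of~$\beta$, so it does \emph{not} extend to general $\beta>0$. For arbitrary $\beta>0$ the paper instead exploits uniform convexity through the \emph{inverse-concavity} of $G=\sigma_k^{1/k}$ (Lemma~\ref{lem-inv-con}): the test quantity is simply the largest principal radius $\tilde h_1{}^1$, and the inequality
\[
(\ddot G^{pq,lm}+2\dot G^{pm}\tilde h_q{}^l)\nabla_1 h_{pq}\nabla_1 h_{lm}\ \ge\ 2G^{-1}(\nabla_1 G)^2
\]
absorbs all second-order cross terms at once (Lemma~\ref{conv-c2}). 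Your proposed $Q=\log\tau_{\max}+A\log\tilde u+Bh(r)$ does not obviously generate such a term, and without inverse-concavity the commutators coming from the exponent $1/\beta$ cannot be absorbed for general $\beta$. Likewise, the upper bound on $\kappa_{\max}$ is not obtained by a ``symmetric argument'' on $-\log\tau_{\min}$; it follows algebraically from the lower bound on $\kappa_{\min}$ together with the already-established upper bound on $\sigma_k$.

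Your convergence mechanism is also different from the paper's and is not substantiated. The paper does not rely on any monotone integral, nor on an independent uniqueness theorem for \eqref{elliptic2}, nor on a spectral/linearization argument. Instead it upgrades the $C^1$ estimate to exponential decay of $|Dr|/r$ directly (Lemma~\ref{exp-decay}), using the explicit sign in \eqref{C1-final-ineq}; interpolation then gives exponential $C^\infty$ convergence, and a sphere comparison (for $\alpha>\beta+k$) or monotonicity of $r_{\max},r_{\min}$ (for $\alpha=\beta+k$) identifies the limiting radius. If you wish to pursue your route, you would need to exhibit the monotone quantity explicitly and prove the uniqueness statement for \eqref{elliptic2}; the paper obtains the latter only as a \emph{corollary} of the flow convergence, not as an input to it.
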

\begin{rem}
	when $\b=1$, Theorem \ref{main-thm-4} reduces to Theorem A.
\end{rem}
Analogous to the calculations in \cite{LSW20b, Xiao20}, we can derive the normalized flow of \eqref{flow-s}. For convenience, we denote $\Phi=r^{\frac{\a}{\b}}\s_k^{\frac{1}{\b}}$ and $F=\s_k^{\frac{1}{\b}}$. Define $\tilde{X}(t) = \phi^{-1}(t) X(t)$, where
\begin{equation}\label{def-phi}	
\phi(t) = \left\{\begin{split}
  &e^{-\gamma t}, \quad &{\rm if} \ \a =k+\b,\\
 &\f 1+\frac{(\a-\b-k) \gamma}{\b} t \r^{\frac{\b}{k+\b-\a}}, \quad &{\rm if} \ a \neq k+\b,
\end{split}
\right.
\end{equation}
and $\gamma= ({C_n^k})^{\frac{1}{\b}}$.
%We obtain $\phi'(t) = -\gamma\phi^{\frac{\a-k}{\b}}$.
Letting $\tilde{r} =\phi^{-1}(t) r$, $\tilde{F} = \phi^{\frac{k}{\b}}(t) F$, we have
\begin{align*}
\frac{\partial }{\partial t} \tilde{X} =& \partial_t (\phi^{-1}(t)) X + \phi^{-1}(t) \partial_t X \\
	=&\gamma\phi^{\frac{\a-\b-k}{\b}}(t) \tilde{X} - \phi^{-1}(t)(\phi(t) \tilde{r})^{\frac{\a}{\b}} \phi^{-\frac{k}{\b}}(t)\tilde{F}\nu \\
	=& \gamma\phi^{\frac{\a-\b-k}{\b}}(t) \tilde{X} - \phi^{\frac{\a-\b-k}{\b}}(t) \tilde{r}^{\frac{\a}{\b}} \tilde{F} \nu.
\end{align*}
Now we define $\tau$ to be
\begin{equation*}
\tau(t) = \left\{\begin{split}
&t, \quad &{\rm if} \ \a =k+\b,\\
&\frac{\b \log \f \b+\f \a-\b-k\r \gamma t \r-\b\log\b}{ \f \a-\b-k \r \gamma}, \quad &{\rm if} \ a \neq k+\b.
\end{split}
\right.
\end{equation*}
%	\begin{align*}
%	\tau =& t, \quad &\a =k+\b, \\
%	\tau =& \frac{\b \log \f \b+\f \a-\b-k\r \gamma t \r-\b\log\b}{ \f \a-\b-k \r \gamma}, \quad &\a \neq k+\b.
%	\end{align*}
%where we have $\frac{d\tau}{dt} = \phi^{\frac{\a-\b-k}{\b}}$
Then we have
\begin{align*}
\frac{\partial \tilde{X}}{\partial \tau} = \gamma\tilde{X} - \tilde{r}^{\frac{\a}{\b}} \tilde{F}\nu.
\end{align*}

From now on, we  call
\begin{equation}\label{s1:flow-n}
\left\{\begin{aligned}
\frac{\partial}{\partial t}X(x,t)=& -r^{\frac{\a}{\b}}\s_k(\kappa)^{\frac{1}{\b}} \nu(x,t)+\gamma   X,%\quad k=1,\cdots,n,\\
\\
X(x,0) =&X_0(x),
\end{aligned}\right.
\end{equation}
the normalized flow of \eqref{flow-s}. %We can analyze the asymptotic behavior of the solution to \eqref{flow-s} by using this flow.

%When $k=1$ the normalized flow becomes,
%\begin{equation}\label{s1:flow-H}
%\left\{\begin{aligned}
%\frac{\partial}{\partial t}X(x,t)=& -r^{\frac{\a}{\b}}H^{\frac{1}{\b}} \nu(x,t)+n^{\frac{1}{\b}}X\\
%X(\cdot,0)=& X_0(\cdot),
%\end{aligned}\right.
%\end{equation}

Notice that the  solution to  $r^{\a} u^{-\b}\s_k(\kappa) =\gamma$ remains invariant under flow \eqref{s1:flow-n}. Theorem \ref{main-thm-3}, Theorem \ref{main-thm-2}, Theorem \ref{main-thm-1} and Theorem \ref{main-thm-4} imply the following result of the prescribed curvature equation \eqref{elliptic2}.
\begin{prop}
When $k\geq 2$, $\a\geq\b+k$, either $\b=k$ or $0<\b\leq 1$, the smooth, closed, $k$-convex, star-shaped solutions to $r^{\a} u^{-\b}\s_k(\kappa) =\gamma$ are spheres. When $k=1$, $\a\geq\b+1$, $\b>0$, the smooth, closed, mean-convex, star-shaped hypersurfaces in $\mathbb \R^{n+1}$ $(n\geq2)$ that satisfy $r^{\a}u^{-\b}H=\gamma$ are spheres. When $n \geq 2$, $\a\geq\b+k$, $\b>0$, the smooth, closed, uniformly convex, star-shaped hypersurfaces that satisfy $r^{\a} u^{-\b}\s_k(\kappa) =\gamma$ are spheres.
\end{prop}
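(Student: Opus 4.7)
The plan is to derive this proposition as an immediate consequence of the four convergence theorems (Theorems \ref{main-thm-3}, \ref{main-thm-2}, \ref{main-thm-1}, \ref{main-thm-4}) via the standard ``stationary solution'' argument for rescaled curvature flows. The key observation is the one already flagged by the authors in the paragraph preceding the proposition: a smooth closed star-shaped hypersurface $M$ satisfying $r^{\a} u^{-\b} \s_k(\k) = \gamma$ is a stationary solution of the normalized flow \eqref{s1:flow-n} in the sense that its normal velocity vanishes identically. Indeed, from \eqref{s1:flow-n} one reads off
\begin{equation*}
\metric{\partial_t X}{\nu} = -r^{\frac{\a}{\b}} \s_k^{\frac{1}{\b}} + \gamma u,
\end{equation*}
and the elliptic equation (possibly after a harmless homothetic rescaling, which preserves all the structural hypotheses and the property of being a sphere) forces this to be zero. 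A tangential drift cannot change the image $M_t \subset \R^{n+1}$, so under the normalized flow with initial datum $M$ one has $\tilde M_\tau = M$ for all $\tau \geq 0$.

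Given this, I would conclude as follows. Feed $M$ as the initial hypersurface $X_0$ into the appropriate convergence theorem according to the four listed parameter regimes: case $k \geq 2$, $\a \geq \b+k$, $0 < \b \leq 1$ invokes Theorem \ref{main-thm-3}; case $k \geq 2$, $\b = k$, $\a \geq 2k$ invokes Theorem \ref{main-thm-2}; case $k = 1$, $\b > 0$, $\a \geq \b+1$ invokes Theorem \ref{main-thm-1}; case $n \geq 2$, $\b > 0$, $\a \geq \b+k$ with $M$ uniformly convex invokes Theorem \ref{main-thm-4}. In each case the smoothness, closedness, star-shapedness, and convexity hypotheses required of $X_0$ are exactly what the proposition assumes of $M$. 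The conclusion of the chosen theorem is that $\tilde M_\tau$ converges exponentially in $C^\infty$ to a sphere centered at the origin as $\tau \to \infty$. Combined with the stationarity $\tilde M_\tau \equiv M$, this forces $M$ itself to be that sphere.

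There is no genuine obstacle here: the proposition is really just a rigidity corollary, not an independent analytic result. The only point to verify carefully is the bookkeeping of hypotheses across the four cases and the trivial fact that the image of a homothety of a round sphere is again a round sphere, so the normalizing rescaling needed to match the constant on the right-hand side of $r^\a u^{-\b} \s_k = \gamma$ to the precise constant that makes the normal velocity vanish causes no loss of generality. Because the argument is uniform across the four cases, I would present a single short proof that simply invokes the relevant theorem inside a case distinction.
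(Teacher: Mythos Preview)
Your proposal is correct and matches the paper's approach exactly: the paper does not give a separate proof of the proposition but simply states, in the sentence immediately preceding it, that solutions to $r^{\a} u^{-\b}\s_k(\kappa) =\gamma$ remain invariant under the normalized flow \eqref{s1:flow-n} and that Theorems \ref{main-thm-3}--\ref{main-thm-4} therefore imply the result. Your write-up is just the natural expansion of that one-line remark, and your observation about the harmless homothetic rescaling to match constants is a reasonable safeguard (relevant only when $\a>\b+k$, since the equation is scale-invariant when $\a=\b+k$).
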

When $\a<\b+k$, % the flow \eqref{s1:flow-n} may reach the origin in finite time, before the hypersurface shrinks to a point. Therefore %
using Li-Sheng-Wang's technique in \cite{LSW20a} or \cite{LSW20b}, we find that the hypersurface evolving by \eqref{s1:flow-n} does not converge to a sphere centered at the origin in general. The following counter example was first given in the proof of \cite[Theorem 1.5]{LSW20a}.
%and also mentioned in \cite{Xiao20}
\begin{prop}\label{ce} %,\cite{Xiao20}]
Suppose $\a<\b+k$, there exists a smooth, closed, uniformly convex hypersurface ${M}_0$ such that under the flow \eqref{s1:flow-n},
\begin{align}\label{ace}
\mathcal{R}\f X\f \cdot, t \r \r:=\frac{\max_{\mathbb{S}^n} r(\cdot,t)}{\min_{\mathbb{S}^n} r(\cdot,t)}\rightarrow \infty \quad {\rm as} \quad  t\rightarrow T
\end{align}
for some $T>0$.
\end{prop}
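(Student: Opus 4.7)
The proof exploits that when $\alpha<\beta+k$, the unit sphere centered at the origin is an unstable stationary solution of the normalized flow \eqref{s1:flow-n}. I would first verify, by rotational symmetry, that spheres $\partial B_R(0)$ remain centered spheres under \eqref{s1:flow-n}, with radius satisfying the scalar ODE
\[
\dot R=\gamma R\,\bigl(1-R^{\rho}\bigr),\qquad \rho:=\tfrac{\alpha-\beta-k}{\beta}.
\]
Under the standing hypothesis, $\rho<0$ and $(\alpha-k)/\beta<1$, so $R=1$ is unstable: solutions with $R(0)<1$ collapse to $R=0$ in finite time $T_\star$, while solutions with $R(0)>1$ blow up as $R(t)\to+\infty$ for $t\to\infty$.

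Next I would take $M_0$ to be a smooth, uniformly convex hypersurface with $r_{\min}(M_0)<1<r_{\max}(M_0)$ and the origin in its interior; a convenient explicit choice is an off-center sphere $\partial B_{\bar R}(p_0)$ with $p_0\ne 0$ and $\bar R-|p_0|<1<\bar R+|p_0|$. The parabolic comparison principle for \eqref{flow-s} (monotonicity of the speed in the principal curvatures) transfers to \eqref{s1:flow-n} via the rescalings \eqref{def-phi}; applied with concentric-sphere barriers it yields
\[
R_-(t)\le r_{\min}(M_t),\qquad r_{\max}(M_t)\le R_+(t),
\]
where $R_\pm$ solve the sphere ODE with $R_-(0)=r_{\min}(M_0)<1<r_{\max}(M_0)=R_+(0)$; consequently $R_-(t)\to 0$ in finite time $T_\star$ and $R_+(t)\to+\infty$.

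The main step, and the main obstacle, is that these two concentric bounds are in the wrong direction for $\mathcal R=r_{\max}/r_{\min}$: they sandwich $M_t$ between a shrinking inner sphere and an expanding outer sphere, but by themselves do not force $\mathcal R(M_t)\to\infty$. Following \cite[Theorem 1.5]{LSW20a}, one instead builds \emph{off-center} sphere barriers: an inscribed ball $B_{\rho_-}(q_-)\subset\mathrm{int}(M_0)$ whose smallest enclosing concentric sphere about the origin has radius strictly less than $1$—so that this enclosure collapses to the origin in finite time by the sphere ODE and therefore pinches the corresponding region of $M_t$ toward $0$—and a second inscribed ball with $|q_+|+\rho_+>1$, whose enclosing concentric sphere grows under the sphere ODE and thereby produces a persistent lower bound on $r_{\max}(M_t)$. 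Combining these two barriers yields $\mathcal R(M_t)\to\infty$ as $t\to T$ for some $T\in(0,\infty]$. The principal technical difficulty is that \eqref{s1:flow-n} is not translation invariant (the factor $r^{\alpha/\beta}$ depends explicitly on the origin), so off-center spheres do not remain spheres under the evolution; they must be tracked by iterated nested concentric-sphere comparisons, which is the technical heart of \cite[Theorem 1.5]{LSW20a} and which I would adapt verbatim to the present flow.
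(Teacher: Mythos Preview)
Your instability heuristic is correct: when $\alpha<\beta+k$ the unit sphere is a repelling fixed point of the radial ODE $\dot R=\gamma R(1-R^{\rho})$, and this is indeed the phenomenon underlying the proposition. You also correctly identify that naive concentric-sphere barriers bound $\mathcal R$ in the wrong direction. However, your proposed remedy has a genuine gap, and your description of what \cite{LSW20a} does is inaccurate.

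The specific problem with your off-center balls is that the comparison directions are backwards. An inscribed ball $B_{\rho_-}(q_-)\subset M_0$ is an \emph{inner} barrier: whatever it evolves to stays inside $M_t$, so at best it furnishes lower bounds. The fact that its enclosing concentric sphere $\partial B_{|q_-|+\rho_-}(0)$ collapses under the ODE says nothing about $M_t$, because $M_0$ is not contained in that sphere (you chose $r_{\max}(M_0)>1$). To force $r_{\min}(M_t)\to 0$ you need an \emph{outer} barrier that pinches toward the origin, and no sphere-based object does this: any concentric sphere enclosing $M_0$ has radius larger than $1$ and therefore expands, while off-center spheres do not stay spheres under this non-translation-invariant flow. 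Your phrase ``iterated nested concentric-sphere comparisons'' does not describe a mechanism that overcomes this obstruction.

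More importantly, the proof in \cite{LSW20a} (which the paper follows) does \emph{not} proceed by off-center sphere barriers. The actual construction is an explicit, non-spherical sub-solution. One writes down a rotationally symmetric family $\hat M_t$, $t\in[-1,0)$, obtained by extending to a closed convex hypersurface the graph over $B_1^n(0)$ of a two-scale profile
\[
\phi(x,t)=\begin{cases}
-|t|^{\theta}+|t|^{(\sigma-1)\theta}|x|^2, & |x|<|t|^{\theta},\\[2pt]
-|t|^{\theta}-\frac{1-\sigma}{1+\sigma}|t|^{\theta(1+\sigma)}+\frac{2}{1+\sigma}|x|^{1+\sigma}, & |t|^{\theta}\le |x|\le 1,
\end{cases}
\]
with $\sigma=(q\theta-\beta)/(k\theta)$, $q=\beta+k-\alpha>0$, and $\theta>1/q$. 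One checks by hand that, for a suitable speed constant $a$, $\hat M_t$ is a sub-solution of $\partial_t X=-a\,r^{\alpha/\beta}\sigma_k^{1/\beta}\nu$ and that $\hat M_t$ reaches the origin as $t\to 0^-$ while still enclosing a fixed ball $B_1(z)$ with $z=(0,\dots,0,10)$. Placing $M_0$ between $\hat M_\tau$ and $B_1(z)$ and running comparison on both sides (the inner comparison uses a sphere centered at $z$, with the speed adjusted by the uniform bound on $r/\tilde r$) forces $M_{t_0}$ to touch the origin while still enclosing $B_{1/2}(z)$; a final rescaling removes the constant $a$. The essential missing ingredient in your outline is this hand-built sub-solution $\hat M_t$: sphere barriers alone are insufficient here.
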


The main difficulty to verify the long time existence of the flow \eqref{flow-s} is the $C^2$ estimate. %in the $k$-convex condition.
In this paper, we propose two new auxiliary functions to deal with %the convergence of %
 flow \eqref{s1:flow-n} for $k$-convex star-shaped hypersurfaces, which we hope might be a useful tool to study other problems. In addition, contracting flows have been employed by several authors in proving geometric inequalities \cite{And96, BHL20, Sch08, Top98}. We hope that we can find new geometric inequalities by use of the flow \eqref{flow-s}.

This paper is organized as follows. In Section \ref{sec:2}, we collect some notations and conventions of star-shaped hypersurfaces, derive the evolution equations of the flow \eqref{s1:flow-n} and give some properties of elementary symmetric functions, we also show that the normalized flow \eqref{s1:flow-n} can be reduced to a parabolic PDE of the radial function. In Section \ref{sec:3}, we prove $C^0$, $C^1$ estimates and show the hypersurface preserves star-shaped along the normalized flow \eqref{s1:flow-n}. In Section \ref{sec:4}, after a delicate calculation, we obtain the $C^2$ estimates of various cases, then we have the long time existence of the flow \eqref{s1:flow-n}. In Section \ref{sec:5},
% the asymptotic convergence is obtained, then the convergence is sphere.
we study the asymptotic behavior of the flow \eqref{s1:flow-n} and give the proofs of Theorem \ref{main-thm-3}--\ref{main-thm-4}. In Section \ref{sec:6}, we give a counter example to verify that the condition $\a \geq \b+k$ in Theorem \ref{main-thm-3}--\ref{main-thm-4} is necessary.
 %The research of the flow \eqref{flow-s} is an important step to Christoffel-Minkowski type problem. %And there are still  many cases of $\b$ unsolved which need more study.
\begin{ack}
	The authors would like to thank Dr. Yingxiang Hu and Dr. Yong Wei for helpful discussions. The work was supported by NSFC Grant No. 11831005 and NSFC-FWO Grant No. 11961131001.
\end{ack}

\section{Preliminaries}\label{sec:2}
In this section, we first derive the evolution equations of the normalized flow \eqref{s1:flow-n}.
\subsection{Evolution equation of the flow}
 For convenience, we rewrite \eqref{s1:flow-n} as
\begin{equation}\label{s2:flow-BGL}
\frac{\partial}{\partial t}X(x,t)= -\Phi \nu(x,t)+\gamma   X,
\end{equation}
where $\Phi=r^{\frac{\a}{\b}}F$, $F=\s_k^{\frac{1}{\b}}$ and $\gamma = (C_n^k)^{\frac{1}{\b}}$.

Let $f(\k)$ be a symmetric function of the principal curvatures $\k = (\k_1, \k_2, \cdots, \k_n)$. There exists a function $\mathcal{F}(A)$, where $A =\lbrace h_i{}^j\rbrace$, such that $F (A)=f(\k)$.
Since $h_i{}^j = \sum_l h_{il} g^{lj}$, $\mathcal{F}$ can be viewed as a function $\hat{\mathcal{F}}(h_{ij}, g_{ij})$ defined on the second fundamental form $\lbrace h_{ij}\rbrace$ and the metric $\lbrace g_{ij}\rbrace$. We denote
\begin{align*}
\dot{ \mathcal{F} }^{pq}(A): =\frac{\partial \hat{\mathcal{F}}}{\partial h_{pq}}(h_{ij}, g_{ij}) \ , \quad  \ddot{\mathcal{F}}^{pq,rs}(A): =\frac{\partial ^2 \hat{\mathcal{F}} }{\partial h_{pq}\partial h_{rs}}(h_{ij}, g_{ij}).
\end{align*}
If we diagonalize $A$ and $\lbrace g_{ij}\rbrace$ at one point, we have
\begin{align*}
\dot{f}^p(\k) :=\frac{\partial f }{\partial \kappa_p}(\k)=\dot{\mathcal{F}}^{pp}(A) \ , \quad  \ddot{f}^{pq}(\k):=\frac{\partial ^2 f}{\partial \kappa_p\partial \kappa_q}=\ddot{\mathcal{F}}^{pp,qq}(A).
\end{align*}
 We refer to \cite{And07} for detailed properties of
$\mathcal{F}(A)$.

%For a symmetric function $\mathcal{F} = \mathcal{F} (h_i{}^j)=\mathcal{F} (h_{ij},g_{ij})$, we define
%\begin{align*}
%\dot{ \mathcal{F} }^{pq}: =\frac{\partial \mathcal{F}}{\partial h_{pq}} \ , \quad  %\ddot{\mathcal{F}}^{pq,rs}: =\frac{\partial ^2\mathcal{F} }{\partial h_{pq}\partial h_{rs}}.
%\end{align*}
% At one point, if $A = \lbrace h_i{}^j\rbrace$ is diagonal with eigenvalues $\k = (\k_1, \k_2, %\cdots, \k_n)$, we have
%\begin{align*}
%\dot{\mathcal{F}}^{pp} =\frac{\partial f }{\partial \kappa_p}= \dot{f}^p \ , \quad  %\ddot{\mathcal{F}}^{pp,qq} =\frac{\partial ^2 f}{\partial \kappa_p\partial \kappa_q}=\ddot{f}^{pq},
%\end{align*}
%where $f$ is defined by $f(\k) = F (A)$. We refer to \cite{And07} for detailed properties of
% $\mathcal{F}(h_i{}^j)$.

%\begin{lem}
	%On a hypersurface $M \subset \R^{n+1}$, we have	
	%\begin{align}
	%\dot{F}^{pq}\nabla_j \nabla_i h_{pq}
	%=&\dot{F}^{pq}\nabla_q \nabla_p h_{ij}-\frac{k}{\b}Fh_i{}^l h_{lj}+\dot{F}^{pq}h_p{}^lh_{lq}h_{ij}\\
	%\nabla_i P=&\frac{\a}{\b}r^{\frac{\a}{\b}-1} \nabla_i r \label{Pi}\\
	%\nabla^j \nabla_i P=&\frac{\a}{\b}(\frac{\a}{\b}-2)r^{\frac{\a}{\b}-2}\nabla_i r \nabla^j r+\frac{\a}{\b}r^{\frac{\a}{\b}-1}\d_i{}^j-\frac{\a}{\b}r^{\frac{\a}{\b}-1}uh_i{}^j\label{Pij}
 	%\end{align}
	
%\end{lem}
%
\begin{lem}
	Along the flow \eqref{s1:flow-n}, we have the following evolution equations	(also see \cite{And07, LSW20b, Xiao20})
	\begin{align}
 \frac{\partial}{\partial t}g_{ij}=&-2\Phi h_{ij}+2 \gamma  g_{ij}, \quad
 \frac{\partial }{\partial t} \nu = \nabla \Phi \label{g} , \\
\frac{\partial}{\partial t}u=&-\Phi+\gamma u+r^{\frac{\a}{\b}} \langle X,\nabla F \rangle +\frac{\a}{\b}\Phi \langle X,\nabla \log r \rangle,\label{u} \\
\frac{\partial}{\partial t} h_i{}^j=&r^{\frac{\a}{\b}}\dot{F}^{pq}\nabla_p\nabla_qh_i{}^j-\f\frac{k}{\b}-1\r\Phi h_i{}^lh_l{}^j+r^{\frac{\a}{\b}}\dot{F}^{pq}h_p{}^lh_{lq}h_i{}^j\nonumber\\
&+r^{\frac{\a}{\b}}\ddot{F}^{pq,rs}\nabla_ih_{pq}\nabla^jh_{rs}
+\frac{\a}{\b}r^{\frac{\a}{\b}-1}\nabla_iF\nabla^jr+\frac{\a}{\b}r^{\frac{\a}{\b}-1}\nabla^jF\nabla_ir \nonumber\\
&+\frac{\a}{\b}r^{\frac{\a}{\b}-1}F\nabla_i\nabla^jr+\frac{\a}{\b}\f\frac{\a}{\b}-1\r r^{\frac{\a}{\b}-2}F\nabla_ir\nabla^jr-\gamma h_i{}^j,\label{evolution h}
\end{align}
where $\nabla$ is the Levi-Civita connection of the induced metric on $M_t$.	
\end{lem}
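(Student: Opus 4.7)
The plan is to obtain each equation by differentiating the defining expression in $t$, substituting the flow equation $\partial_t X = -\Phi \nu + \gamma X$, and then passing every tangential component through the covariant derivative $\nabla$. The tangential component of $\partial_t X$, which equals $\gamma X^{\top}$, generates a Lie-derivative term that I will have to track carefully; under the standard convention it does not move the parametrization (equivalently, one absorbs it by a time-dependent reparametrization), and in any event the scalar geometric quantities $g_{ij}$, $h_{ij}$, $\nu$, $u$ transform in the standard way under the purely normal speed $-\Phi$ plus the scaling part $\gamma X$. Concretely, from $g_{ij} = \langle \partial_i X, \partial_j X\rangle$ I get $\partial_t g_{ij} = 2\langle \partial_i \partial_t X, \partial_j X\rangle_{\mathrm{sym}} = -2\Phi h_{ij} + 2\gamma g_{ij}$, using $\partial_i \nu = h_i{}^l \partial_l X$ for the normal part and $\partial_i X$ itself for the $\gamma X$ part. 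For $\nu$, preserving $|\nu|=1$ and $\langle \nu, \partial_i X\rangle = 0$ forces $\partial_t \nu$ to be tangential with $\langle \partial_t \nu, \partial_i X\rangle = \nabla_i \Phi$, so $\partial_t \nu = \nabla \Phi$ (the $\gamma X$ piece contributes nothing because $\langle \partial_i(\gamma X), \nu\rangle = 0$).

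For the support function $u = \langle X, \nu\rangle$, I would differentiate and substitute the previous two identities:
\begin{equation*}
\partial_t u = \langle \partial_t X, \nu\rangle + \langle X, \partial_t \nu\rangle = -\Phi + \gamma u + \langle X, \nabla \Phi\rangle.
\end{equation*}
To match \eqref{u}, I then expand $\nabla \Phi = \nabla(r^{\a/\b} F) = r^{\a/\b}\nabla F + \frac{\a}{\b} r^{\a/\b} F \, \nabla \log r$, which gives the two remaining terms $r^{\a/\b}\langle X,\nabla F\rangle$ and $\frac{\a}{\b}\Phi\langle X,\nabla \log r\rangle$.

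The real work is the Weingarten tensor \eqref{evolution h}. My route is the standard one: first compute $\partial_t h_{ij}$ by differentiating $h_{ij} = -\langle \partial_i\partial_j X, \nu\rangle + \Gamma^k_{ij}\langle \partial_k X, \nu\rangle = \nabla^2_{ij}\Phi - \Phi\, h_i{}^l h_{lj} + \gamma\, h_{ij}$ (here the $\gamma X$ normal part contributes $\gamma u$-type pieces that cancel against the tangential pieces, leaving only $\gamma h_{ij}$), and then convert to the mixed tensor $h_i{}^j = g^{jk}h_{ik}$ using $\partial_t g^{jk} = 2\Phi h^{jk} - 2\gamma g^{jk}$ from \eqref{g}. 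That step produces the extra $-(k/\b -1)\Phi\, h_i{}^l h_l{}^j$ factor once I combine $2\Phi h_{ij}h^{jk}$ with the piece coming from $\nabla^2 F$ via Simons. Next I invoke the Simons-type identity: for $F = \mathcal F(h_i{}^j)$, $\dot F^{pq}\nabla_p\nabla_q h_i{}^j = \nabla^2_{ij} F - \ddot F^{pq,rs}\nabla_i h_{pq}\nabla^j h_{rs} + \dot F^{pq}h_p{}^l h_{lq} h_i{}^j - F\, h_i{}^l h_l{}^j$, which lets me replace $\nabla^2 F$ in terms of the elliptic operator. Finally I expand $\nabla^2_{ij}(r^{\a/\b} F) = r^{\a/\b}\nabla^2_{ij}F + \frac{\a}{\b}r^{\a/\b-1}(\nabla_i F\,\nabla^j r + \nabla^j F\,\nabla_i r + F\nabla_i\nabla^j r) + \frac{\a}{\b}(\frac{\a}{\b}-1)r^{\a/\b-2}F\nabla_i r\,\nabla^j r$, which accounts for every remaining term on the right-hand side of \eqref{evolution h}.

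The main obstacle is bookkeeping, not ideas. In particular, I must be consistent about what the "time derivative" of a $(1,1)$-tensor means in a moving orthonormal frame, make sure the Simons identity is applied to the scalar $F$ (not to $\Phi$) so that the $\ddot F^{pq,rs}$ term appears without spurious $r$-weights, and verify that all terms generated by the $\gamma X$ drift in \eqref{s2:flow-BGL} collapse to the single $-\gamma h_i{}^j$ on the right of \eqref{evolution h} after one passes from $h_{ij}$ to $h_i{}^j$. Once these routine (but tedious) index contractions are checked against \cite{And07,LSW20b,Xiao20}, the lemma follows.
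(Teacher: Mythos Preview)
Your outline is exactly the paper's argument: compute $\partial_t g_{ij}$, $\partial_t\nu$, $\partial_t u$ directly, obtain $\partial_t h_{ij}=\nabla_i\nabla_j\Phi-\Phi h_i{}^l h_{lj}+\gamma h_{ij}$, raise an index via $\partial_t g^{ij}=2\Phi h^{ij}-2\gamma g^{ij}$ to get $\partial_t h_i{}^j=\nabla^j\nabla_i\Phi+\Phi h_i{}^l h_l{}^j-\gamma h_i{}^j$, expand $\nabla^j\nabla_i\Phi$ by the product rule, and commute derivatives on $h$ using Codazzi plus the Ricci identity. The discussion of a tangential Lie-derivative drift is unnecessary here; the paper simply differentiates $-\langle\partial_i\partial_j X,\nu\rangle$ with the full velocity $-\Phi\nu+\gamma X$, and $-\langle\partial_i\partial_j(\gamma X),\nu\rangle=\gamma h_{ij}$ directly.

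There is, however, a concrete error in your stated Simons-type identity. Since $F=\s_k^{1/\b}$ is homogeneous of degree $k/\b$, one has $\dot F^{pq}h_{pq}=\tfrac{k}{\b}F$, and the commutation (Codazzi then Ricci, as in the paper) yields
\[
\dot F^{pq}\nabla_p\nabla_q h_i{}^j \;=\; \nabla^j\nabla_i F \;-\; \ddot F^{pq,rs}\nabla_i h_{pq}\nabla^j h_{rs}\;-\;\dot F^{pq}h_p{}^l h_{lq}\,h_i{}^j \;+\;\tfrac{k}{\b}F\,h_i{}^l h_l{}^j,
\]
i.e.\ the opposite signs on the last two terms from what you wrote, and a factor $\tfrac{k}{\b}$ rather than $1$. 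With your version the $h_i{}^l h_l{}^j$ contributions would sum to $+2\Phi h_i{}^l h_l{}^j$ instead of $-\f\tfrac{k}{\b}-1\r\Phi h_i{}^l h_l{}^j$, and the $r^{\a/\b}\dot F^{pq}h_p{}^l h_{lq}h_i{}^j$ term would carry the wrong sign in \eqref{evolution h}. Once you correct these two signs and the homogeneity coefficient, your sketch and the paper's proof coincide line for line.
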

\begin{proof}
By direct calculations, we have
\begin{align*}
\frac{\partial}{\partial t}g_{ij}=&{\partial_t} \metric{\partial_i X}{\partial_j X}\\
=&\langle \partial_i\f-\Phi\nu+\gamma X\r,\partial_jX \rangle +\langle \partial_iX,\partial_j\f-\Phi\nu+\gamma X\r \rangle \\
=&-\Phi\f \langle \partial_i\nu,\partial_jX \rangle +\langle \partial_i X,\partial_j\nu \rangle \r+2\gamma g_{ij}\\
=&-2\Phi h_{ij}+2\gamma g_{ij},
\end{align*}
\begin{equation} \label{ev-nu}
\begin{aligned}
\frac{\partial}{\partial t} \nu=& \langle \partial_t\nu,\partial_jX \rangle g^{il}\partial_l X\\
=&-\langle \nu,\partial_j\f-\Phi\nu+\gamma X\r \rangle g^{jl}\partial_l X\\
=& \partial_j \Phi g^{jl}\partial_l X \\
=&\nabla\Phi.
\end{aligned}
\end{equation}
Using \eqref{ev-nu}, we have the evolution of the support function $u$
\begin{align*}
\frac{\partial}{\partial t}u=&\partial_t\langle X,\nu \rangle \\
=&\langle -\Phi\nu+\gamma X,\nu \rangle+\langle X,\nabla\Phi \rangle \\
=&-\Phi+\gamma u+r^{\frac{\a}{\b}} \langle X,\nabla F \rangle +\frac{\a}{\b}\Phi \langle X,\nabla \log r \rangle.
\end{align*}
Now we calculate the evolution of $h_{ij}$,
\begin{align*}
\frac{\partial}{\partial t} h_{ij}=&-\partial_t \langle \partial_i\partial_j X,\nu \rangle \\
=&- \langle \partial_i\partial_j\f-\Phi\nu+\gamma X\r,\nu \rangle \\
=&\nabla_j\nabla_i\Phi-\Phi h_i{}^l h_{lj}+\gamma h_{ij}.
\end{align*}
From \eqref{g}, we have
\begin{align*}
\frac{\partial}{\partial t} g^{ij}=-g^{il}\f \partial_t g_{lm} \r g^{mj}=2\Phi h^{ij}-2\gamma g^{ij}.
\end{align*}
Thus,
\begin{equation}
\begin{aligned}\label{h_i^j}
\frac{\partial}{\partial t} h_i{}^j=&\partial_t h_{il}g^{lj}+h_{il}\partial_t g^{lj}\\
=&\nabla^j\nabla_i\Phi+\Phi h_i{}^l h_l{}^j-\gamma h_i{}^j.
\end{aligned}
\end{equation}
For the first term of \eqref{h_i^j}, we have
\begin{equation}
\begin{aligned}\label{Phi_ij}
\nabla^j\nabla_i\Phi=&r^{\frac{\a}{\b}}\nabla^j\nabla_iF+\nabla^jr^{\frac{\a}{\b}}\nabla_iF+\nabla^jF\nabla_ir^{\frac{\a}{\b}}+F\nabla^j\nabla_ir^{\frac{\a}{\b}}\\
=&r^{\frac{\a}{\b}}\dot{F}^{pq}\nabla^j\nabla_ih_{pq} + r^{\frac{\a}{\b}}\ddot{F}^{pq,rs}\nabla^jh_{rs}\nabla_ih_{pq}+\frac{\a}{\b}r^{\frac{\a}{\b}-1}\nabla^jr\nabla_iF\\
&+ \frac{\a}{\b}r^{\frac{\a}{\b}-1}\nabla^jF\nabla_ir+\frac{\a}{\b}r^{\frac{\a}{\b}-1}F\nabla^j\nabla_ir+\frac{\a}{\b}\f\frac{\a}{\b}-1\r r^{\frac{\a}{\b}-2}F\nabla^jr\nabla_ir.
\end{aligned}
\end{equation}
Using the Codazzi equation, the Ricci identities and $\dot{F}^{pq}h_q{}^l=\dot{F}^{lq}h_q{}^p$, we have
\begin{equation}
\begin{aligned}\label{Ric id}
\dot{F}^{pq}\nabla^j\nabla_ih_{pq}=&\dot{F}^{pq}\nabla^j\nabla_qh_{pi}\\
=&\dot{F}^{pq} \nabla_q\nabla^jh_{pi}+\dot{F}^{pq}h_i{}^lR_{lpq}{}^j+\dot{F}^{pq}h_p{}^lR_{liq}{}^j\\
=&\dot{F}^{pq}\nabla_p\nabla_qh_i{}^j+\dot{F}^{pq}h_i{}^lh_{lq}h_p{}^j-\dot{F}^{pq}h_i{}^lh_l{}^jh_{pq}+\dot{F}^{pq}h_p{}^lh_{lq}h_i{}^j-\dot{F}^{pq}h_p{}^lh_{qi}h_l{}^j\\
=&\dot{F}^{pq}\nabla_p\nabla_qh_i{}^j-\frac{k}{\b}Fh_i{}^lh_l{}^j+\dot{F}^{pq}h_p{}^lh_{lq}h_i{}^j,
\end{aligned}
\end{equation}
where we used the fact that $\dot{F}^{pq} h_{pq} = \frac{k}{\b} F$ because $F$ is homogeneous of degree $\frac{k}{\b}$.
%$$
%\dot{F}^{pq}h_i{}^lh_{lq}h_p{}^j-\dot{F}^{pq}h_p{}^lh_{qi}h_l{}^j=0,
%$$
%since $F$ is defined on the symmetric matrix $\lbrace h_{ij}\rbrace$ and satisfies $GL\f\mathbb{R}^n\r$-invariant. i.e. $F\f %P^{-1}AP\r=F,\quad \forall P\in GL\f\mathbb{R}^n\r$. \\
Inserting \eqref{Ric id} and \eqref{Phi_ij} into \eqref{h_i^j}, we have
\begin{align*}
\frac{\partial}{\partial t} h_i{}^j=&r^{\frac{\a}{\b}}\dot{F}^{pq}\nabla_p\nabla_qh_i{}^j-\f\frac{k}{\b}-1\r\Phi h_i{}^lh_l{}^j+r^{\frac{\a}{\b}}\dot{F}^{pq}h_p{}^lh_{lq}h_i{}^j\\
&+r^{\frac{\a}{\b}}\ddot{F}^{pq,rs}\nabla_ih_{pq}\nabla^jh_{rs}
+\frac{\a}{\b}r^{\frac{\a}{\b}-1}\nabla_iF\nabla^jr
+\frac{\a}{\b}r^{\frac{\a}{\b}-1}\nabla^jF\nabla_ir\\
&+\frac{\a}{\b}r^{\frac{\a}{\b}-1}F\nabla_i\nabla^jr+\frac{\a}{\b}\f\frac{\a}{\b}-1\r r^{\frac{\a}{\b}-2}F\nabla_ir\nabla^jr-\gamma h_i{}^j.
\end{align*}
\end{proof}
%\begin{lem}
	%Let $L=\partial_t-P\dot{F}^{pq}\nabla_q \nabla_p$, along the flow \eqref{s1:flow-n}, we have the following evolution equations.
	%\begin{align}\label{evolution u}
 %L h_i{}^j=&P\ddot{F}^{pq,rs}\nabla_i h_{pq} \nabla^j h_{rs}+F\nabla^j \nabla_i P+\nabla_i F\nabla^j P+\nabla^j F \nabla_i P\\
 %&+P\dot{F}^{pq}h_p{}^{l} h_{lq}h_i{}^j
 %-(\frac{k}{\b}-1) \Phi h_{il}h^{lj}-\phi  h_i{}^j\nonumber\\
	%L u=&-(1+\frac{k}{\b})\Phi+F\langle X,\nabla P\rangle +P\dot{F}^{ij}h_i{}^kh_{kj}u+\phi  u
	%\end{align}%
%\end{lem}
\subsection{Evolution of the radial graph}
	For a star-shaped hypersurface $M \subset \mathbb{R}^{n+1}$, we can parameterize it as a graph of the radial function $r(\theta): \mathbb{S}^n\to \mathbb{R}$, i.e.% over $\mathbb{S}^n$%
%	\begin{align*}
%	M = \lbrace (r(\theta, t), \theta) : \theta \in \mathbb{S}^n \rbrace,
%	\end{align*}
	\begin{align*}
	M = \lbrace r(\theta) \theta : \theta \in \mathbb{S}^n \rbrace,
	\end{align*}
	where $\theta=(\theta^1,\cdots,\theta^n)$ is a local normal coordinate system of $\mathbb{S}^n$. %Set $\g= \log r$ and $\rho=\sqrt{1+|D\g|^2}$.
	Let $f_i = D_i f$, $f_{ij} = D_{ij} f$, $f_{ijk} =D_{ijk} f$, where $D$ denotes the Levi-Civita connection on %the unit sphere %
	$\mathbb{S}^n$.
	Similar to \cite[p. 839]{LSW20b}, the flow \eqref{s1:flow-n} can be written as the scalar equation %$M_t = ( r(\t, t), \t)$
	\begin{equation}\label{s1:flow-rn}
	\left\{\begin{aligned}
	\frac{\partial}{\partial t}r(\theta,t)=& -r^{\frac{\a}{\b} -1}\sqrt{r^2+|D r|^2}\s_k(\kappa)^{\frac{1}{\b}}+\gamma  r,\\
	r(\cdot,0)=& r_0(\cdot).
	\end{aligned}\right.
	\end{equation}
	%For simplicity,
	We define $\g = \log r$% and will use the evolution of $\g$ in the later estimates.%$C^1$ estimate,
	, then
	\begin{equation}\label{varphig}
	\frac{\partial}{\partial t}\g=\frac{\partial}{\partial t} \log r= -e^{{(\frac{\a}{\b}-1})\g}\sqrt{1+|D\g|^2}\s_k(\kappa)^{\frac{1}{\b}}+\gamma.
	\end{equation}
	
 Now we define $\rho = \sqrt{1+ |D \g|^2}$. The support function $u= \langle X, \nu \rangle$, induced metric $g_{ij}$ and the second fundamental form $h_{ij}$ of hypersurface $M$ can be expressed as %(see also \cite{})
	\begin{equation}\label{rg-g}
	\begin{aligned}
	u = \frac{r}{\rho}, \quad
	g_{ij}=r^2(\d_{ij}+\g_i \g_j), \quad
	g^{ij}=\frac{1}{r^2}(\d_{ij}-\frac{\g_i \g_j}{\rho^2}),
	\end{aligned}
	\end{equation}
	\begin{equation}\label{rg-h}
	\begin{aligned}
	h_{ij} = \frac{r}{\rho}(\d_{ij} + \g_i \g_j -\g_{ij}), \quad
%	h_i{}^j =& \frac{1}{r \rho} (\d_{il} +\g_i \g_l -\g_{il})(\d_{lj} - \frac{\g_l \g_j}{\rho^2}). \nonumber
	h_i{}^j =\frac{1}{r \rho}(\d_{ij} - \g_{ij} + \frac{\g_i{}^l \g_l \g_j}{\rho^2}).
	\end{aligned}
	\end{equation}
	%h_{ij}=&\frac{1}{\rho}(r\delta_{ij}+\frac{2}{r}r_ir_j-r_{ij}) \label{radial-h}
\subsection{Elementary symmetric functions}\label{symm}
	Here we state some algebraic properties of the elementary symmetric function $\s_m(\k)$, $m=1,\cdots,n$, where $\kappa=(\kappa_1,\cdots,\kappa_n)$. We recall $\s_m(\k)$ defined by
	\begin{equation*}
	\s_m(\k)=\sum_{1\leq i_1<\cdots<i_m \leq n}\k_{i_1}\cdots \k_{i_m}, \quad m=1,2, \cdots,n.
	\end{equation*}
	The Garding cone $\Gamma_m^+$ is an open symmetric convex cone in $\mathbb{R}^n$ with vertex at the origin, given by
	\begin{align*}
	\Gamma_m^{+}=\lbrace\f \kappa_1,\cdots, \kappa_n\r\in\mathbb{R}^n |\s_j(\kappa)>0,  \forall j= 1,\cdots,m\rbrace.
	\end{align*}
	Clearly $\s_m(\kappa)=0$ for $\k \in\partial \bar{\Gamma}_m^+$ and
	\begin{align*}
	\Gamma_n^+\subset\cdots\subset {\Gamma}_m ^+\subset\cdots\subset\Gamma_1^+.
	\end{align*}
	In particular, $\Gamma^+ = \Gamma_n^+$ is called the positive cone,
	\begin{align*}
	\Gamma^+=\lbrace\f \kappa_1,\cdots, \kappa_n\r\in\mathbb{R}^n |\kappa_1>0,\cdots,\kappa_n>0\rbrace.
	\end{align*}
	We collect some inequalities %in \cite{Wang09}%
	 related to the polynomial $\s_k(\kappa)$, which are needed in the following text. For $\kappa\in\Gamma_m^+$, from the symmetry of $\s_m(\k)$, we can assume that $\kappa_1 \geq \cdots \geq \kappa_n$. Define $\s_{m}\f \kappa|i_1 i_2 \cdots i_j \r$ as the $m$-th elementary symmetric function of $\kappa_1,\kappa_2,\cdots,\kappa_n$ with $\kappa_{i_1}=0,\kappa_{i_2}=0,\cdots, \kappa_{i_j}=0$. Then $\frac{\partial\s_m}{\partial\kappa_i}(\kappa)=\s_{m-1}(\kappa|i )$. At the same time, if we set $\s_0=1$ and $\s_l=0$ for $l>n$, one can verify the following properties (see \cite{GaoLM18, Lg}, \cite[pp. 183-184]{Wang09})
	\begin{enumerate}
		\item $\s_m(\kappa)=\s_{m}(\kappa|i)+\kappa_i\s_{m-1}(\kappa|i),$
		\item $\s_{m}(\kappa)\s_{m-2}(\kappa)\leq \frac{(m-1)(n-m+1)}{m(n-m+2)} \s_{m-1}^2(\kappa),$
		\item $ \f \frac{\s_m(\kappa)}{C_n^m} \r^{\frac{1}{m}}\leq  \f \frac{\s_l(\kappa)}{C_n^l} \r^{\frac{1}{l}}, 1\leq l\leq m,$
		\item $\sum_i \frac{\partial \s_m}{\partial \k_i} \k_i^2 = \s_1 \s_m -(m+1) \s_{m+1}$,
		\item $\kappa_1\sigma_{m-1}(\kappa|1)\geq \frac{m}{n}\s_m \f \kappa \r. $
	\end{enumerate}
    We can also find the following properties (see \cite{LT94}). If $\kappa\in\Gamma_m^+$,
	\begin{align*}
	\kappa_i\geq& 0,\quad 1\leq i\leq m,\\
	\s_{l}\f\kappa|i_1 i_2\cdots i_j  \r>&0, \quad  \forall \lbrace i_1 i_2\cdots i_j\rbrace\subseteq\lbrace1,2,\cdots,n\rbrace, \  l+j\leq m.
	\end{align*}
	Besides, %there are two important concavities %for $\s_m(\k)$ \cite{CNS85,Tru90}
	%, which are essential to our paper.
    $ \f  \frac{\s_m}{\s_1}  \r^{\frac{1}{m-1}}(\k)$ and $\s_m^{\frac{1}{m}}(\k)$ are concave on $\Gamma_m^+$.  $\s_m^{\frac{1}{m}}(\k)$ is  inverse-concave on $\Gamma^+$. Here, we call $f(\k)$ inverse-concave, if
	\begin{align*}
	f_{-1} (\k_1, \k_2, \cdots, \k_n) := f(\k_1^{-1}, \k_2^{-1}, \cdots, \k_n^{-1})^{-1}
	\end{align*}
	is concave.
	We refer readers to \cite{CNS85,Tru90} for details.
	%\begin{align*}
	%S_{k-1;i}(\kappa)\geq \theta S_{k-1}(\kappa)  $  for all $i\geq k
	%\end{align*}
	%Where $\theta$ only depends on n and k

\section{ $C^0$ and $C^1$ estimates}\label{sec:3}
In this section, we first give $C^0$ and $C^1$ estimates of the normalized flow \eqref{s1:flow-n}. As a corollary, we show that the hypersurface preserves star-shaped along the flow \eqref{s1:flow-n}.
\subsection{ $C^0$ estimate}\label{sec:3.1}
\begin{lem}\label{C0-est}
	Let $r(\cdot, t)$ be a positive, $k$-convex smooth solution to \eqref{s1:flow-n} on $\mathbb{S}^n \times [0,T)$. If $\a \geq k+\b$, $\b>0$, then there exists a positive constant $C$ depending only on $\max_{\mathbb{S}^n}r(\cdot,0)$ and $\min_{\mathbb{S}^n}r(\cdot,0)$ such that
	\begin{align*}
	\frac{1}{C} \leq r(\cdot, t) \leq C, \quad \forall t \in [0,T).
	\end{align*}
\end{lem}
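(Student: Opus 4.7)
The plan is to apply the maximum principle to the scalar evolution equation \eqref{s1:flow-rn} for the radial function, tracking $r_{\max}(t) = \max_{\mathbb{S}^n} r(\cdot,t)$ and $r_{\min}(t) = \min_{\mathbb{S}^n} r(\cdot,t)$ separately via Hamilton's trick. The analysis reduces to an ODE comparison at each of these extremal points.

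At a spatial maximum $\theta_0$ of $r(\cdot,t)$, I would first pass to $\g=\log r$: since $Dr(\theta_0)=0$ and $D^2 r(\theta_0)\le 0$, it follows that $D\g(\theta_0)=0$ and $D^2\g(\theta_0)\le 0$, hence $\rho(\theta_0)=1$ and by \eqref{rg-h}
\[
h_i{}^j(\theta_0)=\frac{1}{r_{\max}}(\delta_{ij}-\g_{ij})\ge \frac{1}{r_{\max}}\delta_{ij}.
\]
Thus all principal curvatures satisfy $\kappa_i\ge 1/r_{\max}$; in particular $\kappa\in\Gamma^+$ at $\theta_0$, and
\[
\s_k(\kappa)\ge C_n^k\, r_{\max}^{-k}=\gamma^{\b}r_{\max}^{-k}.
\]
Inserting this together with $|Dr|=0$ into \eqref{s1:flow-rn} yields
\[
\frac{d}{dt}r_{\max}\le -\gamma\, r_{\max}^{(\a-k)/\b}+\gamma r_{\max}=\gamma r_{\max}\bigl(1-r_{\max}^{(\a-k-\b)/\b}\bigr).
\]
Since $\a\ge k+\b$, the bracket is nonpositive whenever $r_{\max}\ge 1$, and ODE comparison gives $r_{\max}(t)\le\max\{r_{\max}(0),1\}$ on $[0,T)$.

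At a spatial minimum $\theta_1$ the same computation reverses the sign: $D^2\g(\theta_1)\ge 0$ forces $\kappa_i\le 1/r_{\min}$ for every $i$. Some $\kappa_i$ could be negative here, so a termwise estimate on $\s_k$ is unavailable, but $\kappa\in\Gamma_k^+$ permits the Maclaurin chain of Section \ref{symm}. Combined with $\s_1=\sum_i\kappa_i\le n/r_{\min}$ this gives
\[
\s_k(\kappa)\le C_n^k\Bigl(\tfrac{\s_1}{n}\Bigr)^k\le C_n^k\, r_{\min}^{-k}=\gamma^{\b}r_{\min}^{-k}.
\]
Feeding this into \eqref{s1:flow-rn} yields
\[
\frac{d}{dt}r_{\min}\ge \gamma r_{\min}\bigl(1-r_{\min}^{(\a-k-\b)/\b}\bigr),
\]
which is nonnegative whenever $r_{\min}\le 1$, so $r_{\min}(t)\ge\min\{r_{\min}(0),1\}$.

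The main (mild) obstacle is the lower bound. At the minimum, the maximum principle only gives the one-sided control $\kappa_i\le 1/r_{\min}$, and because $k$-convexity allows negative principal curvatures a direct termwise bound on $\s_k$ is impossible. It is the Maclaurin inequality available on $\Gamma_k^+$ that rescues the argument by converting the bound on $\s_1$ into one on $\s_k$. The hypothesis $\a\ge k+\b$ enters only to make the exponent $(\a-k-\b)/\b$ nonnegative, ensuring the driving terms have the right sign on each side of $r=1$; everything else is a standard ODE comparison, and the resulting constant $C$ depends only on $\max_{\mathbb{S}^n} r(\cdot,0)$ and $\min_{\mathbb{S}^n} r(\cdot,0)$.
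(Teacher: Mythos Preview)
Your proof is correct and follows essentially the same route as the paper: apply the maximum principle to the scalar radial equation, compare the curvature at an extremal point with that of the round sphere of the same radius, and reduce to an ODE comparison. The paper carries this out for $\g=\log r$ rather than $r$, but the substance is identical. One remark: for the lower bound the paper simply writes ``for the same reason'' and jumps to the inequality for $\g_{\min}$, whereas you make the extra step explicit by invoking the Newton--Maclaurin inequality on $\Gamma_k^+$ to pass from $\s_1\le n/r_{\min}$ to $\s_k\le C_n^k r_{\min}^{-k}$; this is a perfectly clean way to justify the step the paper leaves to the reader (one could equally use the monotonicity $\partial\s_k/\partial\kappa_i=\s_{k-1}(\kappa|i)>0$ on $\Gamma_k^+$ together with the convexity of $\Gamma_k^+$ to compare with the diagonal point).
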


\begin{proof}
	Now we focus on the scalar equation \eqref{varphig} of $\g$. Fix some time $t \in [0,T)$, at the maximum point $(\theta_0, t)\in \mathbb{S}^n$ of $\g(\cdot,t)$, we know $|D \g| =0$ and $\lbrace \g_{ij} \rbrace$ is negative semi-definite, which imply
	\begin{align*}
	\rho =1, \quad h_i^j = e^{-\g}(\d_{ij} -\g_{ij})\geq e^{-\g}\d_{ij}.
	\end{align*}
	If we let $\g_{\max}(t)=\max\g(\cdot,t)$, from \eqref{varphig} and $\s_k^{\frac{1}{\b}}(h_i{}^j) \geq  \gamma e^{- \frac{k}{\b} \g}$ at $(\theta_0, t)$, we have
	\begin{equation*}
	\begin{aligned}
	\frac{d \g_{\max}}{dt}(t) =& - e^{\f\frac{\a}{\b}-1\r\g_{\max}(t)}\s_k^{\frac{1}{\b}}(h_i{}^j) +\gamma \\
	 \leq& - \gamma e^{\f\frac{\a}{\b}-1-\frac{k}{\b}\r \g_{\max}(t)} + \gamma.
	\end{aligned}
	\end{equation*}
 For the same reason, let $\g_{\min}(t)=\min\g(\cdot,t)$, we have
	\begin{align}
	\frac{d \g_{\min}}{dt}(t) \geq -\gamma e^{\f\frac{\a}{\b}-1-\frac{k}{\b} \r\g_{\min}(t)} +\gamma.
	\end{align}
	
	If $\a =\b+k$, it means $\g_{\min}(0) \leq \g(t) \leq \g_{\max}(0)$ along the normalized flow. If $\a > \b+k$, it implies $\min \lbrace \g_{\min}(0), 0 \rbrace \leq \g (t) \leq \max \lbrace \g_{\max}(0), 0 \rbrace$. So we have
	\begin{align*}
	\frac{1}{C} \leq r \leq C
	\end{align*}
	for some constant $C>0$, which only depends on the initial hypersurface $M_0$.
	\end{proof}
\subsection{ $C^1$ estimate}
\begin{lem}\label{c1}
Let $r(\cdot, t)$ be a positive, smooth solution to \eqref{s1:flow-n} on $\mathbb{S}^n \times [0,T)$. If $\a \geq k+\b$, $\b>0$, we have%there exists a constant $C>0$, such that
\begin{equation*}
|Dr|\leq C,
\end{equation*}
where $C$ only depends on $M_0$.
\end{lem}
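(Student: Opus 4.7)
The plan is to pass to the scalar parabolic equation \eqref{varphig} for $\varphi:=\log r$ on $\mathbb{S}^n$ and apply the maximum principle to the auxiliary function $W:=\tfrac{1}{2}|D\varphi|^2$. By Lemma~\ref{C0-est}, $r$ is already controlled and $|Dr|=r|D\varphi|$, so a bound on $W$ is equivalent to the claim.

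First I would rewrite \eqref{varphig} in the fully-nonlinear form $\varphi_t=G(\varphi,D\varphi,D^2\varphi)$. Substituting \eqref{rg-h} into $\sigma_k^{1/\beta}$ and extracting the factor $(r\rho)^{-k/\beta}$ via the $k$-homogeneity of $\sigma_k$ gives
\begin{equation*}
G \;=\; -\,e^{\frac{\alpha-\beta-k}{\beta}\varphi}\,\rho^{\frac{\beta-k}{\beta}}\,\widetilde F(D\varphi,D^2\varphi) \,+\,\gamma,
\end{equation*}
in which $\widetilde F$ does not depend on $\varphi$. Ellipticity of $\sigma_k^{1/\beta}$ on $\Gamma_k^+$ makes $G^{pq}:=\partial G/\partial\varphi_{pq}$ positive definite, and the hypothesis $\alpha\ge\beta+k$ together with $G\le\gamma$ yields $G_\varphi=\tfrac{\alpha-\beta-k}{\beta}(G-\gamma)\le 0$.

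Next, differentiating $\varphi_t=G$ in $D_i$, contracting with $\varphi^i$, and invoking the Ricci identity on $\mathbb{S}^n$ to commute third covariant derivatives produces an evolution equation of the schematic form
\begin{equation*}
\partial_t W - G^{pq}D_pD_q W \;=\; G^pD_pW + 2G_\varphi W - G^{pq}\varphi_{ip}\varphi^i{}_q + G^{pq}\bigl(|D\varphi|^2\delta_{pq}-\varphi_p\varphi_q\bigr),
\end{equation*}
the last group of terms coming from the positive sectional curvature of $\mathbb{S}^n$. At a space–time maximum $(\theta_0,t_0)$ of $W$ with $t_0>0$, $D_pW=0$, $G^{pq}D_pD_qW\le 0$ and $\partial_tW\ge 0$. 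Both $2G_\varphi W$ and $-G^{pq}\varphi_{ip}\varphi^i{}_q$ are non-positive, so the curvature contribution must dominate; combining this with the $C^0$ control of $r$ and $\mathrm{tr}(G^{pq})$ supplied by Lemma~\ref{C0-est} yields $W(\theta_0,t_0)\le C(M_0)$, and hence $W\le C$ everywhere.

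The principal technical obstacle is tracking the sign of the Ricci corrections after commuting the third covariant derivatives and arranging the auxiliary function so that the curvature term has a sign compatible with a bound for large $|D\varphi|$; the positive curvature of $\mathbb{S}^n$ supplies precisely the required sign. Once $W$ is bounded, Lemma~\ref{C0-est} immediately gives $|Dr|\le r|D\varphi|\le C$, completing the proof.
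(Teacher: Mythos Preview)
Your overall strategy matches the paper's: reduce to the scalar equation for $\varphi=\log r$, differentiate, contract with $D\varphi$, commute via the Ricci identity on $\mathbb{S}^n$, and apply the maximum principle to $W=\tfrac12|D\varphi|^2$. However, the curvature term in your evolution equation carries the wrong sign. Using the paper's Ricci identity $\varphi_{pqi}=\varphi_{ipq}+\varphi_q\delta_{pi}-\varphi_i\delta_{pq}$, contracting $G^{pq}\varphi_{pqi}$ with $\varphi^i$ yields
\[
\partial_tW-G^{pq}D_pD_qW=2G_\varphi W+G^pD_pW-G^{pq}\varphi_{ip}\varphi^i{}_q+G^{pq}\bigl(\varphi_p\varphi_q-|D\varphi|^2\delta_{pq}\bigr),
\]
so the Ricci contribution is $-G^{pq}(|D\varphi|^2\delta_{pq}-\varphi_p\varphi_q)\le 0$, not $+G^{pq}(\dots)$ as you wrote. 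With the correct sign every term on the right is nonpositive at a spatial maximum (since $G_\varphi\le 0$, $D_pW=0$, and $G^{pq}$ is positive definite), so $\partial_tW_{\max}\le 0$ and $W\le W(0)$. This is exactly inequality \eqref{C1-final-ineq} in the paper.

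Your concluding paragraph, by contrast, tries to salvage a bound from a \emph{positive} curvature term by saying it ``must dominate'' and then invoking control of $\mathrm{tr}(G^{pq})$ from Lemma~\ref{C0-est}. That argument does not work: an inequality $0\le(\text{nonpositive})+(\text{nonnegative})$ gives no bound on $W$, and Lemma~\ref{C0-est} is purely a $C^0$ estimate on $r$---it says nothing about $\mathrm{tr}(G^{pq})$, which depends on the second fundamental form. Fix the sign and the proof is complete (and simpler) without any appeal to $\mathrm{tr}(G^{pq})$.
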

\begin{proof}
For a fixed time $t$, we assume that $\theta_0 \in \mathbb{S}^n$ is the spatial maximum point of $\frac{1}{2}|D\g|^2$. At $(\theta_0, t)$, we have by use of \eqref{varphig}
\begin{equation}
\begin{aligned}\label{Dg}
\frac{\partial}{\partial t} (\frac{1}{2}|D\g|^2)=&\g_i(\partial_t \g)_i
                              =\g_i(-e^{(\frac{\a}{\b}-1)\g}\sqrt{1+|D\g|^2}\s_k^{\frac{1}{\b}})_i \\
                              =&-(\frac{\a}{\b}-1)e^{(\frac{\a}{\b}-1)\g}\sqrt{1+|D\g|^2}\s_k^{\frac{1}{\b}}|D\g|^2-e^{(\frac{\a}{\b}-1)\g}\sqrt{1+|D\g|^2}\g_i(\s_k^{\frac{1}{\b}})_i,
\end{aligned}
\end{equation}
where we used $\rho_i=(\sqrt{1+|D\g|^2}),_i=0$ at $(\theta_0, t)$. For the last term of \eqref{Dg}, we know
\begin{equation}\label{s_k,i}
\begin{aligned}
(\s_k^{\frac{1}{\b}})_i=&\frac{1}{\b}\s_k^{\frac{1}{\b}-1}\frac{\partial \s_k}{\partial h_p{}^q}h_p{}^q{},_i
                       =\frac{1}{\b}\s_k^{\frac{1}{\b}-1}\s_k^{pq}g_{qs}h_p{}^s{},_i\\
                       =&\frac{1}{\b}\s_k^{\frac{1}{\b}-1}\s_k^{pq}\f(g_{qs}h_p{}^s),_i-h_p{}^sg_{qs,i} \r\\          =&\frac{1}{\b}\s_k^{\frac{1}{\b}-1}\s_k^{pq}(h_{pq,i}-h_p{}^sg_{qs,i}),
\end{aligned}
\end{equation}
where we regarded $h_{pq}$, $g_{qs}$ as tensors on $\mathbb S^n$.% and so are their covariant derivatives%

Now we calculate the term $h_{pq,i}$. Differentiating $\g_i=\frac{r_i}{r}$, we have $\g_{ij}=\frac{r_{ij}}{r}-\frac{r_ir_j}{r^2}$, which gives $\frac{r_{ij}}{r}=\g_{ij}+\g_i\g_j$.
By \eqref{rg-h}, we have at $(\theta_0, t)$
%\begin{align*}
%h_{ij}=\frac{r}{\rho}(\d_{ij}+2\g_i\g_j-\frac{r_{ij}}{r})
%\end{align*}
\begin{align}\label{hpqi}
h_{pq,i}=\frac{r}{\rho}\g_i(\d_{pq}+\g_p\g_q-\g_{pq})+\frac{r}{\rho}(\g_{pi}\g_q+\g_p\g_{qi}-\g_{pqi}).
\end{align}
%--------------------------------------------------------------------------------2020.11.25
From \eqref{rg-g}, \eqref{s_k,i} and \eqref{hpqi}, we have at $(\theta_0, t)$
\begin{equation}\label{phi i s k i}
\begin{aligned}
\g_i(\s_k{}^{\frac{1}{\b}})_i=&\frac{1}{\b}\s_k{}^{(\frac{1}{\b}-1)}\s_k{}^{pq}h_{pq,i}\g_i-\frac{1}{\b}\s_k{}^{(\frac{1}{\b}-1)}(\s_k)^{pq}h_p{}^sg_{qs,i}\g_i\\
                             =&\frac{1}{\b}\s_k{}^{(\frac{1}{\b}-1)}\s_k{}^{pq}\frac{r}{\rho} \f |D\g|^2(\d_{pq}+\g_p\g_q-\g_{pq})+\g_i\g_q\g_{pi}+\g_i\g_p\g_{qi}-\g_i\g_{pqi} \r \\
                             &- \frac{1}{\b}\s_k{}^{(\frac{1}{\b}-1)}\s_k^{pq}h_p{}^s\g_i \f 2e^{2\g}(\d_{qs}+\g_q\g_s)\g_i +e^{2\g}\g_{qi}\g_s+e^{2\g}\g_q\g_{si} \r \\
                             =&\frac{1}{\b}\frac{r}{\rho}\s_k{}^{(\frac{1}{\b}-1)}\s_k{}^{pq}(\d_{pq}+\g_p\g_q-\g_{pq})|D\g|^2-\frac{1}{\b}\frac{r}{\rho}\s_k{}^{(\frac{1}{\b}-1)}\s_k{}^{pq}\g_i\g_{pqi}\\
                             &- \frac{2}{\b}e^{2\g}\s_k{}^{(\frac{1}{\b}-1)}\s_k^{pq}h_p{}^s(\d_{qs}+\g_q\g_s)|D\g|^2,
\end{aligned}
\end{equation}
where we used $\frac{1}{2}D_p|D\g|^2=\g_i\g_{pi}=0$. Inserting \eqref{rg-g} and \eqref{rg-h} into \eqref{phi i s k i}, we have
%Since \eqref{rg-h} can be written into $\rho e^{-\g}h_{pq}=\d_{pq}+\g_p\g_q-\g_{pq}$, $e^{-2\g}g_{qs}=\d_{qs}+\g_q\g_s$,
\begin{equation}\label{phi i s k i 2}
\begin{aligned}
\g_i(\s_k{}^{\frac{1}{\b}})_i=&\frac{1}{\b}\s_k{}^{(\frac{1}{\b}-1)}k\s_k|D\g|^2-\frac{1}{\b}\frac{e^{\g}}{\rho}\s_k{}^{(\frac{1}{\b}-1)}\s_k{}^{pq}\g_i\g_{pqi}-\frac{2}{\b}\s_k{}^{(\frac{1}{\b}-1)}k\s_k|D\g|^2\\
                             =&-\frac{k}{\b}\s_k{}^{\frac{1}{\b}}|D\g|^2-\frac{1}{\b}\frac{e^{\g}}{\rho}\s_k{}^{(\frac{1}{\b}-1)}\s_k{}^{pq}\g_i\g_{pqi}.
\end{aligned}
\end{equation}
By the Ricci identity
\begin{align*}
\g_{pqi}=&\g_{piq}+\g_lR^{\mathbb{S}^n}_{lpqi}\nonumber\\
         =&\g_{ipq}+\g_l(\d_{lq}\d_{pi}-\d_{li}\d_{pq}\nonumber)\\
         =&\g_{ipq}+\g_q\d_{pi}-\g_i\d_{pq}.
\end{align*}
Thus
\begin{equation} \label{phi i phi pqi}
\g_i\g_{pqi}=\g_i\g_{ipq}+\g_p\g_q-\d_{pq}|D\g|^2=(\frac{1}{2}|D\g|^2)_{pq}-\g_{ip}\g_{iq}+\g_p\g_q-\d_{pq}|D\g|^2.
\end{equation}
Putting \eqref{phi i phi pqi} into \eqref{phi i s k i 2}, we have
\begin{equation}\label{phi i s k i 3}
\g_i(\s_k{}^{\frac{1}{\b}})_i=-\frac{k}{\b}\s_k{}^{\frac{1}{\b}}|D\g|^2-\frac{1}{\b}\frac{e^{\g}}{\rho}\s_k{}^{(\frac{1}{\b}-1)}  \s_k{}^{pq}\f (\frac{1}{2}|D\g|^2)_{pq}-\g_{ip}\g_{iq}+\g_p\g_q- \d_{pq}|D\g|^2 \r .
\end{equation}
Plugging \eqref{phi i s k i 3} into \eqref{Dg}, we have
%------------------------------------------------------------2021.02.22
\begin{align}
\frac{\partial }{\partial t}(\frac{1}{2}|D\g|^2)=& -(\frac{\a}{\b}-1)e^{(\frac{\a}{\b}-1)\g}\rho\s_k^{\frac{1}{\b}}|D\g|^2+e^{(\frac{\a}{\b}-1)\g}\rho\frac{k}{\b}\s_k^{\frac{1}{\b}}|D\g|^2\nonumber\\
                              &+ \frac{1}{\b}e^{(\frac{\a}{\b}-1)\g}\rho\s_k^{(\frac{1}{\b}-1)}\frac{e^{\g}}{\rho} \f \s_k{}^{pq}(\frac{1}{2}|D\g|^2)_{pq}-\s_k{}^{pq}\g_{ip}\g_{iq}+\s_k{}^{pq}\g_p\g_q-\s_k{}^{pq} \d_{pq}|D\g|^2 \r \nonumber\\
                              =& -\frac{(\a-\b-k)}{\b}e^{(\frac{\a}{\b}-1)\g}\rho\s_k{}^{\frac{1}{\b}}|D\g|^2 \nonumber \\
                              &+\frac{1}{\b}e^{\frac{\a}{\b}\g}\s_k{}^{(\frac{1}{\b}-1)} \f \s_k{}^{pq}(\frac{1}{2}|D\g|^2)_{pq}
                              -\s_k{}^{pq}\g_{ip}\g_{iq}+\s_k{}^{pq}\g_p\g_q-\s_k{}^{pq} \d_{pq}|D\g|^2 \r \nonumber\\
                              \leq& -\frac{(\a-\b-k)}{\b}e^{(\frac{\a}{\b}-1)\g}\rho\s_k{}^{\frac{1}{\b}}|D\g|^2-\frac{1}{\b}e^{\frac{\a}{\b}\g}\s_k{}^{(\frac{1}{\b}-1)}\s_k{}^{pq}\g_{ip}\g_{iq} \nonumber\\
                              &-\frac{1}{\b}e^{\frac{\a}{\b}\g}\s_k{}^{(\frac{1}{\b}-1)}\s_k{}^{pq}(\d_{pq}|D\g|^2-\g_p\g_q), \label{C1-final-ineq}
\end{align}
where the last inequality was from $(|D\g|^2)_{pq} \leq 0$ at $(\theta_0,t)$. When $\a\geq\b+k$, from $\s_k{}^{pq}\geq 0$ and $\d_{pq}|D\g|^2-\g_p\g_q\geq 0$, we get $\partial_ t(\frac{1}{2}|D\g|^2)\leq 0$ at $ (\theta_0, t)$. So $|D \g|$ is bounded from above by a constant. By Lemma \ref{C0-est}, we have $|Dr|\leq C$ for some constant $C$.
\end{proof}
\begin{cor}\label{cor u}
	Under the same assumptions in Lemma \ref{C0-est}, along the normalized flow \eqref{s1:flow-n}, the hypersurface $M_t$ preserves star-shaped and the support function $u$ satisfies
	\begin{align*}
	\frac{1}{C} \leq u \leq C
	\end{align*}
	for some constant $C >0$, where $C$ only depends on the initial hypersurface $M_0$.
\end{cor}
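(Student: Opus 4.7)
The plan is to derive Corollary \ref{cor u} as a direct consequence of Lemmas \ref{C0-est} and \ref{c1}, using the radial graph representation set up in Section \ref{sec:2}. Recall that for a star-shaped hypersurface written as $X = r(\theta)\theta$, the support function satisfies $u = r/\rho$ with $\rho = \sqrt{1+|D\varphi|^2}$ and $\varphi = \log r$, so all the needed bounds reduce to bounds on $r$ and $|D\varphi|$.

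First I would observe that star-shapedness is preserved along the flow simply because the flow \eqref{s1:flow-n} is equivalent to the scalar parabolic equation \eqref{s1:flow-rn} for the radial function $r(\theta,t)$, and the $C^0$ estimate in Lemma \ref{C0-est} guarantees that $r(\theta,t)$ stays strictly positive for all $\theta \in \mathbb{S}^n$ and $t \in [0,T)$. Hence $M_t$ remains a radial graph over $\mathbb{S}^n$, i.e. star-shaped with respect to the origin, throughout the evolution.

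Next, for the bounds on $u$, I would combine the two previous lemmas as follows. From Lemma \ref{C0-est} there exists $C_1 > 0$ depending only on $M_0$ such that $C_1^{-1} \leq r \leq C_1$. From Lemma \ref{c1} there exists $C_2 > 0$ with $|Dr| \leq C_2$, which together with the lower bound on $r$ gives
\begin{equation*}
|D\varphi| = \frac{|Dr|}{r} \leq C_1 C_2,
\end{equation*}
so $\rho = \sqrt{1+|D\varphi|^2}$ satisfies $1 \leq \rho \leq \sqrt{1 + C_1^2 C_2^2}$. Plugging into $u = r/\rho$ yields
\begin{equation*}
\frac{1}{C_1 \sqrt{1 + C_1^2 C_2^2}} \leq u = \frac{r}{\rho} \leq C_1,
\end{equation*}
which gives the desired two-sided bound $C^{-1} \leq u \leq C$ with $C$ depending only on $M_0$.

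There is no real obstacle here; the corollary is essentially a bookkeeping consequence of the two preceding lemmas, and the only mild subtlety is ensuring that the constants depend only on the initial hypersurface, which is transparent from the way $C_1$ and $C_2$ are produced in the proofs of Lemma \ref{C0-est} and Lemma \ref{c1}.
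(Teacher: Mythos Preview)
Your proof is correct and follows essentially the same approach as the paper: both derive the bounds on $u=r/\rho$ from the $C^0$ estimate on $r$ (Lemma \ref{C0-est}) and the gradient bound on $\varphi=\log r$ (Lemma \ref{c1}). The only cosmetic difference is that the paper phrases the preservation of star-shapedness via the quantitative lower bound $\langle\partial_r,\nu\rangle=1/\rho\geq 1/C'$, whereas you argue it through the persistence of the radial-graph representation; these are equivalent.
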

\begin{proof}
	As $|D \g|$ and $r$ are bounded by two positive constants, it follows that
	\begin{align*}
	u =\frac{r}{\rho} = \frac{r}{\sqrt{1 +|D \g|^2}}
	\end{align*}
	is also bounded, i.e., $\frac{1}{C} \leq u \leq C$ for some $C$. Since $u = \metric{X}{\nu} = r \metric{\partial_r}{\nu}$, we have
	\begin{align*}
	\metric{\partial_r}{\nu} = \frac{u}{r} =\frac{1}{\rho}= (1+ |D \g|^2)^{-\frac{1}{2}} \geq \frac{1}{C'}
	\end{align*}
	for some $C'>0$. Thus, the hypersurface preserves star-shaped along the flow \eqref{s1:flow-n}.
\end{proof}
\section{$C^2$ estimates}\label{sec:4}
In this section, we will derive the $C^2$ estimates of flow \eqref{s1:flow-n}. To simplify the statements of  following lemmas, we always assume $\a \geq \b+k$ and the initial hypersurface is smooth, closed, star-shaped and at least $k$-convex. Besides, we define a parabolic operator
\begin{align*}
\mathcal{L} = \partial_t -r^{\frac{\a}{\b}} \dot{F}^{ij} \nabla_j \nabla_i.
\end{align*}
\subsection{The bounds of $F = \sigma_k^{\frac{1}{\b}}$}
\begin{lem}\label{F}
Along the normalized flow \eqref{s1:flow-n}, there exists a constant $C>0$ such that
\begin{equation*}
F\geq C.
\end{equation*}
\end{lem}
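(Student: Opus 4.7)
The plan is to obtain a uniform positive lower bound on the speed $\Phi=r^{\alpha/\beta}F$ by the parabolic maximum principle applied to a well-chosen quotient, and then read off $F\geq C$ from the $C^{0}$ bound $r\leq C$ in Lemma \ref{C0-est}.

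The auxiliary function I would try is
\[
Q \;=\; \frac{\Phi}{u-c_{0}},\qquad c_{0}:=\tfrac{1}{2}\inf_{\mathbb{S}^{n}\times[0,T)}u,
\]
whose denominator stays uniformly positive thanks to Corollary \ref{cor u}; equivalently one can work with $\log\Phi-\log(u-c_{0})$ so the algebra is linear. To compute $\mathcal{L}Q$, I would first compute $\mathcal{L}\Phi$: contract the evolution equation \eqref{evolution h} for $h_{i}{}^{j}$ with $\dot F^{i}{}_{j}$ (viewing $F$ as a function of the Weingarten map) and combine with $\partial_{t}r=-\Phi u/r+\gamma r$ and the identity $\dot F^{pq}h_{pq}=\tfrac{k}{\beta}F$, which collapses several terms into multiples of $\Phi$. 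In parallel, I would compute $\mathcal{L}u$ from \eqref{u} together with the standard Gauss--Weingarten identities for $\nabla_{i}\nabla_{j}u$, $\nabla_{i}\nabla_{j}r$ and $|\nabla r|^{2}=1-u^{2}/r^{2}$.

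At a spatial minimum $(x_{0},t_{0})$ of $Q(\cdot,t)$, the first-order condition $(u-c_{0})\nabla\Phi=\Phi\nabla u$ lets me eliminate $\nabla\Phi$ in favor of $\nabla u$ throughout $\mathcal{L}Q$; the second-order condition plus ellipticity of $\dot F^{ij}$ on $\Gamma_{k}^{+}$ gives $\dot F^{ij}\nabla_{i}\nabla_{j}Q\geq 0$, hence $\partial_{t}Q\geq\mathcal{L}Q$ there. After cancellation and using the hypothesis $\alpha\geq\beta+k$ to fix signs, I expect a pointwise differential inequality of the schematic form
\[
\partial_{t}Q_{\min}\;\geq\;A-B\,Q_{\min},
\]
with uniform positive constants $A,B$ depending only on $M_{0}$ and $n,k,\alpha,\beta$. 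This forces $Q_{\min}(t)\geq\min\{Q_{\min}(0),A/B\}>0$ for all $t\in[0,T)$, and unpacking $Q$ together with Lemma \ref{C0-est} and Corollary \ref{cor u} gives $\Phi\geq C'$ and therefore $F\geq C''$.

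The main obstacle is the bookkeeping of the numerous lower-order terms produced when $\dot F^{i}{}_{j}$ is contracted into \eqref{evolution h}: the indefinite-sign cross terms $\tfrac{\alpha}{\beta}r^{\alpha/\beta-1}\nabla_{i}F\,\nabla^{j}r$ and the contribution from $\nabla_{i}\nabla^{j}r$ must be absorbed into the good terms coming from $\dot F^{pq}h_{pq}=\tfrac{k}{\beta}F$ and the positivity of $\dot F^{ij}$, and this is the step where $\alpha\geq\beta+k$ is essential. If the simple choice of $Q$ above does not produce a favourable sign one would adjust by inserting an extra factor of $r^{\delta}$ or $u^{\delta}$ with $\delta$ tuned so that the $r$-Hessian contribution is controlled.
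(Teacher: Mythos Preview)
Your plan overcomplicates the argument and, as written, has a genuine gap. The paper does \emph{not} use the quotient $Q=\Phi/(u-c_{0})$ for the lower bound; it simply applies the maximum principle to $\Phi$ itself. The key is that $\mathcal{L}\Phi$ is already clean: if you use the unprocessed evolution $\partial_{t}h_{i}{}^{j}=\nabla^{j}\nabla_{i}\Phi+\Phi h_{i}{}^{l}h_{l}{}^{j}-\gamma h_{i}{}^{j}$ (equation \eqref{h_i^j}) rather than the expanded form \eqref{evolution h}, together with $\partial_{t}r^{2}=-2\Phi u+2\gamma r^{2}$, you get directly
\[
\mathcal{L}\Phi \;=\; r^{\alpha/\beta}\Phi\,\dot F^{ij}h_{i}{}^{l}h_{lj}\;-\;\frac{\alpha}{\beta}\frac{\Phi^{2}u}{r^{2}}\;+\;\frac{(\alpha-k)\gamma}{\beta}\,\Phi,
\]
with \emph{no} cross terms $\nabla_{i}F\,\nabla^{j}r$ at all. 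Those cross terms in \eqref{evolution h} come precisely from expanding $\nabla_{i}\nabla_{j}\Phi$ by the product rule, so they re-absorb when you contract back; your ``main obstacle'' is an artifact of computing $\mathcal{L}\Phi$ the hard way. At a spatial minimum of $\Phi$ the first term on the right is nonnegative and the last two give $\partial_{t}\Phi_{\min}\geq -c_{1}\Phi_{\min}^{2}+c_{2}\Phi_{\min}$, which yields $\Phi_{\min}\geq\min\{\Phi_{\min}(0),c_{2}/c_{1}\}$ and hence $F\geq C$ by the $C^{0}$ bound on $r$.

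Your choice $Q=\Phi/(u-c_{0})$ is in fact the auxiliary function the paper uses for the \emph{upper} bound (Lemma~\ref{F2}), and it does not work for the lower bound. Combining $\mathcal{L}\Phi$ and $\mathcal{L}u$ in $\mathcal{L}Q$ produces the term
\[
r^{\alpha/\beta}\dot F^{ij}h_{i}{}^{l}h_{lj}\Bigl(\frac{\Phi}{u-c_{0}}-\frac{\Phi u}{(u-c_{0})^{2}}\Bigr)
\;=\;-\,\frac{c_{0}}{(u-c_{0})^{2}}\,r^{\alpha/\beta}\Phi\,\dot F^{ij}h_{i}{}^{l}h_{lj},
\]
which is \emph{negative}. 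Since $\dot F^{ij}h_{i}{}^{l}h_{lj}=\sum_{i}\dot f^{i}\kappa_{i}^{2}$ is not a~priori bounded above at this stage of the argument (the $C^{2}$ estimates come later), this term cannot be absorbed, and your expected inequality $\partial_{t}Q_{\min}\geq A-BQ_{\min}$ will not materialize; in particular there is no source for a positive constant $A$. Drop the denominator and work with $\Phi$ directly.
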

\begin{proof}
First, we calculate the evolution equation of $\Phi$. From \eqref{h_i^j}, we have
\begin{equation}
\begin{aligned}
\frac{\partial}{\partial t}\Phi=&r^{\frac{\a}{\b}} \partial_t F+F\partial_t \f r^{\frac{\a}{\b}} \r \\
              =&r^{\frac{\a}{\b}}%\dot{F}^{ij}
              \frac{\partial F}{\partial h_i{}^j}\partial_t h_i{}^j+\frac{\a}{2\b}r^{\frac{\a}{\b}-2} F \partial_t \langle X, X \rangle \\
              =&r^{\frac{\a}{\b}}\frac{\partial F}{\partial h_i{}^j}(\nabla^j \nabla_i \Phi+\Phi h_i{}^l h_l{}^j-\gamma  h_i{}^j)+\frac{\a}{\b}r^{\frac{\a}{\b}-2}F(-\Phi u+\gamma  r^2)\\
              =&r^{\frac{\a}{\b}}\dot{F}^{ij}\nabla_j \nabla_i \Phi+r^{\frac{\a}{\b}}\Phi\dot{F}^{ij}h_i{}^l h_{lj}-\frac{\a}{\b}\frac{\Phi^2}{r^2}u+\frac{(\a-k)\gamma}{\b} \Phi.\label{phi}
\end{aligned}
\end{equation}
 At the spatial minimum point of $\Phi$ on $M_t$, we have $r^{\frac{\a}{\b}}\dot{F}^{ij}\nabla_j \nabla_i\Phi\geq 0$ % We can diagonalize the second fundamental form {$h_{i}^j$} at $Q$.
and $r^{\frac{\a}{\b}}\Phi\dot{F}^{ij}h_i{}^l h_l{}^j=r^{\frac{\a}{\b}}\Phi\dot{f}^i\kappa_i{}^2\geq 0$.
Then, we have
\begin{equation*}
\begin{aligned}
	\frac{d}{dt} \Phi_{\min} (t) \geq&- \frac{\a}{\b}\frac{\Phi_{\min}^2(t)}{r^2} u+ \frac{(\a-k)\gamma}{\b} \Phi_{\min}(t) \\
	 \geq& -c_1 \Phi_{\min}^2(t) + %c_2%
	 \gamma \Phi_{\min}(t),
\end{aligned}
\end{equation*}
for some constant $c_1 >0$, where $c_1$ can be given by Lemma \ref{C0-est} and Corollary \ref{cor u}. If $0 \leq \Phi_{\min}(t) \leq \frac{\gamma}{c_1}$, we have $\frac{d}{dt} \Phi_{\min}(t) \geq 0$. Therefore $\Phi_{\min} (t) \geq \min \lbrace \Phi_{\min} (0), \frac{\gamma}{c_1} \rbrace$. Since $F = r^{- \frac{\a}{\b}}\Phi$ and $r$ is bounded, $F$ is bounded from below by a constant $C$ that only depends on $M_0$, $\a$, $\b$.
\end{proof}

\begin{lem}\label{F2}
Along the normalized flow \eqref{s1:flow-n}, there exists a constant $C>0$, such that
\begin{equation*}
F\leq C.
\end{equation*}
\end{lem}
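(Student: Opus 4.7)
My plan is to bound $F$ from above via a parabolic maximum principle applied to the scale-invariant auxiliary function
\begin{equation*}
W = \frac{\Phi}{u} = \frac{r^{\a/\b}\,\s_k^{1/\b}}{u}.
\end{equation*}
This choice is natural because the stationary equation $r^{\a}u^{-\b}\s_k = \gamma^{\b}$ is equivalent to $W \equiv \gamma$, so any upper bound on $W_{\max}$ translates directly into an upper bound on $F$ once $u$ and $r$ are pinched between positive constants (which already follows from Lemma \ref{C0-est} and Corollary \ref{cor u}).

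First I would derive the evolution of $u$. Using $\nabla_i u = h_i{}^k \langle X, \partial_k X\rangle$, the Codazzi identity, and the Gauss formula, one obtains after contracting with $\dot F^{ij}$ the identity
\begin{equation*}
\dot F^{ij}\nabla_j\nabla_i u = \langle X, \nabla F\rangle + \frac{k}{\b}F - u\,\dot F^{ij}h_i{}^l h_{lj}.
\end{equation*}
Combining this with \eqref{u} and $\langle X, \nabla\log r\rangle = 1 - u^2/r^2$ yields
\begin{equation*}
\mc L u = \gamma u + \frac{\a - k - \b}{\b}\,\Phi - \frac{\a u^2\Phi}{\b r^2} + u\,r^{\a/\b}\dot F^{ij}h_i{}^l h_{lj}.
\end{equation*}
Putting this together with $\mc L \Phi$ from \eqref{phi} and expanding $\mc L W$ directly from $W = \Phi/u$, the decisive observation is a clean cancellation: the two a priori dangerous positive contributions, $r^{\a/\b}\Phi\,\dot F^{ij}h_i{}^l h_{lj}$ (from $\mc L \Phi$) and $\frac{\a\Phi^2}{\b r^2}$, cancel identically, and after reorganizing the remaining cross terms using $\nabla_i W = (\nabla_i\Phi - W\nabla_i u)/u$ one finds
\begin{equation*}
\mc L W = \frac{\a - k - \b}{\b}\,W(\gamma - W) + \frac{2 r^{\a/\b}}{u}\,\dot F^{ij}\nabla_j u\,\nabla_i W.
\end{equation*}

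At any point where $W$ attains its spatial maximum on $M_t$ one has $\nabla W = 0$ and $\dot F^{ij}\nabla_j\nabla_i W \leq 0$ (since $\dot F^{ij}$ is positive definite on $\G_k^+$), hence
\begin{equation*}
\frac{d W_{\max}}{d t} \leq \frac{\a - k - \b}{\b}\,W_{\max}(\gamma - W_{\max}).
\end{equation*}
The hypothesis $\a \geq \b + k$ makes the prefactor non-negative, so the right-hand side is non-positive whenever $W_{\max} \geq \gamma$. A standard ODE comparison then yields $W_{\max}(t) \leq \max\{W_{\max}(0),\,\gamma\}$ for all $t$, and the two-sided bounds on $u$ and $r$ from Lemma \ref{C0-est} and Corollary \ref{cor u} upgrade this to $F \leq C$.

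The hardest part of the plan is really just identifying the right auxiliary function: the term $r^{\a/\b}\Phi\,\dot F^{ij}h_i{}^l h_{lj}$ is positive and cannot be bounded using $C^0$ or $C^1$ data alone, so one needs an auxiliary function whose evolution eliminates it by exact cancellation rather than by some Newton--MacLaurin-style inequality. The scaling of the elliptic equation $r^{\a}u^{-\b}\s_k = \gamma^{\b}$ pins down $W = \Phi/u$ as the canonical choice, after which the computation is essentially bookkeeping.
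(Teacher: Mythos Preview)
Your argument is correct, and it is genuinely different from the paper's proof. The paper uses the shifted auxiliary function $Q=\log\Phi-\log(u-a)$ with $a=\tfrac12\inf u$; at the maximum of $Q$ the quadratic curvature term $r^{\a/\b}\dot F^{ij}h_i{}^lh_{lj}$ does \emph{not} cancel but survives with the favorable sign $-\tfrac{a}{u-a}r^{\a/\b}\dot F^{ij}h_i{}^lh_{lj}$, and the authors then invoke the Newton--MacLaurin inequality $\sum_i\dot f^i\k_i^2\geq cF^{1+\b/k}$ to produce a super-linear absorbing term, arriving at $\mc L Q\leq -c_1F^{1+\b/k}+CF+C$. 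Your unshifted choice $W=\Phi/u$ gives exact cancellation of that term (and of $\a\Phi^2/(\b r^2)$ as well), leading to the clean closed ODE $\frac{d}{dt}W_{\max}\leq \frac{\a-k-\b}{\b}W_{\max}(\gamma-W_{\max})$ with no estimate needed. Your route is shorter, avoids the Newton--MacLaurin step entirely, and yields the sharper bound $W_{\max}(t)\leq\max\{W_{\max}(0),\gamma\}$; the paper's shifted approach, on the other hand, is a more robust template that does not rely on the specific scaling that makes the cancellation exact, which is why it recurs in many curvature-flow arguments.
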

\begin{proof}
Let $Q= \log\Phi-\log(u-a)$, where $a = \frac{1}{2}\inf_{M \times [0,T)} u$. % equals half of the uniformly lower bound of $u$ given in Lemma \eqref{c1} and we denote $\mathcal{L}=\partial_t-PF^{ij}\nabla_j \nabla_i$.
%We note that if $Q \geq C$ at some point, we then have $F \geq C'$ for some $C'$ that only depends on $C$ and $M_0$ at the point, since $u-a$ is bounded by $C^0$ and $C^1$ estimates.
Now we calculate the evolution of $u$,
	\begin{align*}
	\nabla_i u =& \metric{X}{\partial_i \nu} = h_{i}{}^l \metric{X}{\partial_l X}, \\
	\nabla_j \nabla_i u =& \nabla^l h_{ij}\metric{X}{\partial_l X} + h_{ij} - h_i{}^lh_{lj} u,
	\end{align*}
where we used the Codazzi equation in the second equality. From \eqref{u}, we have
\begin{equation}
\begin{aligned}\label{Lu}
\mathcal{L}u=&\partial_t u-%r^{\frac{\a}{\b}}
r^{\frac{\a}{\b}}\dot{F}^{ij}\nabla_i \nabla_ju\\
=&-\f 1+\frac{k}{\b}\r \Phi+\gamma u +\frac{\a}{\b}\Phi \metric{X}{\nabla \log r}+r^{\frac{\a}{\b}}
\dot{F}^{ij}h_i{}^lh_{lj} u.
\end{aligned}
\end{equation}
For a fixed time $t$, we calculate at the maximum point of $Q$ on $M_t$, where we have
\begin{align}\label{cr-Q}
\frac{\nabla \Phi}{\Phi} = \frac{\nabla u}{u-a}.
\end{align}
From \eqref{phi}, \eqref{Lu} and \eqref{cr-Q},  %we can calculate the evolution equation of $Q$.
we have at the maximum point of $Q$
\begin{align*}
\mathcal{L}Q=&\frac{\mathcal{L}\Phi}{\Phi}-\frac{\mathcal{L}u}{u-a}\\
  =&r^{\frac{\a}{\b}}\dot{F}^{ij}h_i{}^l h_{lj}-\frac{\a}{\b}\frac{\Phi}{r^2}u+\frac{(\a-k)\gamma}{\b}+(1+\frac{k}{\b})\frac{\Phi}{u-a}-\gamma  \frac{u}{u-a}\\
  &-\frac{\a}{\b}\frac{\Phi}{u-a}\langle X,\nabla \log r\rangle -\frac{u}{u-a}r^{\frac{\a}{\b}} \dot{F}^{ij}h_i{}^lh_{lj}\\
  \leq&-\frac{a}{u-a}r^{\frac{\a}{\b}}\dot{F}^{ij}h_i{}^lh_{lj}+CF+C
\end{align*}
for some $C>0$ by %last inequality is from the%
 $C^0$, $C^1$ estimates. % At the maximum point of $Q$ on $M_t$, we can diagonalize the second fundamental form $\{h_i{}^j\}$% at p %
 %.
Then $\dot{F}^{ij}h_i{}^l h_l{}^j$ can be written as $\sum_i \dot{f}^i\kappa_i{}^2$.
By the formula $\sum_i \frac{\partial \s_k}{\partial \k_i} \k_i^2 = \s_1 \s_k -(k+1)\s_{k+1}$ and Newton-MacLaurin inequalities, we have
\begin{equation*}
\begin{aligned}
\sum_i\dot{f}^i\kappa_i{}^2=&\sum_i \frac{\partial \s_k^{\frac{1}{\b}}}{\partial \k_i} \k_i^2 = \frac{1}{\b}\s_k^{\frac{1}{\b}-1}(H\s_k -(k+1) \s_{k+1}) \\
=& \frac{1}{\b}H \s_k^{\frac{1}{\b}} - \frac{k+1}{\b} \s_k^{\frac{1-\b}{\b}} \s_{k+1}\\
\geq& (C_n^k)^{-\frac{1}{k}} \frac{k}{\b} \s_k^{\frac{1}{\b} + \frac{1}{k}} =  (C_n^k)^{-\frac{1}{k}} \frac{k}{\b}f^{1+ \frac{\b}{k}}.
\end{aligned}
\end{equation*}
% is concave on $\Gamma_k^+$, we have
%\begin{align*}
%\sum_i \frac{\partial \s_k^{\frac{1}{k}}}{\partial \k_i}(\k) \k_i^2
%=&\sum_i \frac{\partial \s_k^{\frac{1}{k}}}{\partial \k_i}(\k) \k_i + \sum_i \frac{\partial \s_k^{\frac{1}{k}}}{\partial \k_i}(\k) (\k_i^2 -\k_i) \\
%=&\s_k^{\frac{1}{k}}(\k) + \sum_i \frac{\partial \s_k^{\frac{1}{k}}}{\partial \k_i}(\k) (\k_i^2 -\k_i) \\
%\geq& \s_k^{\frac{1}{k}}(\k_1^2, \cdots, \k_n^2)\\
%\geq& c(n,k)\s_k^{\frac{2}{k}}(\k)
%\end{align*}
%for some constant $c(n,k)>0$, where we used $\s_k^{\frac{1}{k}}$ is homogeneous of degree one in the first line, used the concavity of $\s_k^{\frac{1}{k}}$ in the third line and used Cauchy-Schwarz inequality in the last line.
Therefore
\begin{align*}
 \mathcal{L}Q\leq&-c_1 F^{(1+ \frac{\b}{k})}+ CF+C
\end{align*}
 for some constants $c_1$, $C >0$. Thus, there exists a constant $C>0$ that only depend on $M_0$, whenever $F >C$, we have $\frac{d}{dt}Q_{\max}(t) <0$. From Lemma \ref{C0-est} and Corollary \ref{cor u}, $r$ and $u$ are bounded. Hence $F$ goes to infinity when $Q$ goes to infinity. Therefore $Q$ is bounded from above by a constant, which gives an upper bound of $F$.
\end{proof}

\begin{cor}\label{cor Phi}
	Along the flow \eqref{s1:flow-n}, $\Phi = r^{\frac{\a}{\b}} F$ is bounded, i.e.
	\begin{align*}
	\frac{1}{C} < \Phi(x, t) < C
	\end{align*}
	for some constant $C>0$ that only depends on $M_0$, $\a$, $\b$, $k$.
\end{cor}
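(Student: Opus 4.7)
The plan is to observe that Corollary \ref{cor Phi} is an essentially immediate consequence of the $C^0$ estimate for the radial function together with the two-sided bound on $F$ that was just established. By Lemma \ref{C0-est}, there is a constant $C_1 > 0$ depending only on $M_0$ (and through the hypothesis $\alpha \geq \beta + k$) such that $\tfrac{1}{C_1} \leq r(\cdot,t) \leq C_1$ on $\mathbb{S}^n \times [0,T)$. Since $\alpha, \beta > 0$, this yields uniform two-sided positive bounds on $r^{\alpha/\beta}$.

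Next I would invoke Lemma \ref{F} and Lemma \ref{F2}, which together give a constant $C_2 > 0$ depending only on $M_0$, $\alpha$, $\beta$, $k$ (and implicitly $n$) with $\tfrac{1}{C_2} \leq F(\cdot,t) \leq C_2$. Multiplying the two-sided bound for $r^{\alpha/\beta}$ by the two-sided bound for $F$, I conclude
\[
\tfrac{1}{C_1^{\alpha/\beta} C_2} \;\leq\; \Phi(x,t) \;=\; r^{\alpha/\beta} F \;\leq\; C_1^{\alpha/\beta} C_2,
\]
which is exactly the claimed estimate after setting $C := \max\{C_1^{\alpha/\beta} C_2,\, C_1^{\alpha/\beta} C_2\}$ (or the larger of this and its reciprocal). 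Since every constant in this chain depends only on the listed data, so does the final $C$.

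There is no real obstacle here; the only mildly delicate point to mention is that the bounds on $F$ in Lemmas \ref{F} and \ref{F2} were themselves obtained from evolution arguments which implicitly used the $C^0$ and $C^1$ control of Lemmas \ref{C0-est} and \ref{c1} and Corollary \ref{cor u}, so the constants in those lemmas trace back to $M_0$, $\alpha$, $\beta$, $k$. Hence combining the inequalities is just bookkeeping, and the corollary follows directly.
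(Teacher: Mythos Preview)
Your proof is correct and follows exactly the same approach as the paper: combine the two-sided bound on $r$ from Lemma \ref{C0-est} with the two-sided bound on $F$ from Lemmas \ref{F} and \ref{F2} to bound $\Phi = r^{\alpha/\beta} F$. The paper's own proof is a two-line version of what you wrote.
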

\begin{proof}
	From Lemma \ref{F} and Lemma \ref{F2} we know $F$ is bounded. From Lemma \ref{C0-est}, $r$ is also bounded by two positive constants. Then the corollary follows.
\end{proof}

%-------------------------------------------------------------------------2020.11.25
\subsection{The bound of principal curvatures}
\begin{lem}\label{Lem 4.3}
Under the flow \eqref{s1:flow-n}, when $k=1$, $\b>0$ and $\a\geq\b+1$, the principal curvatures of the mean-convex solution have a uniform bound, i.e.
\begin{equation*}
|\kappa_i|\leq C, \quad i=1,\cdots,n.
\end{equation*}
\end{lem}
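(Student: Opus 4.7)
Since Corollary \ref{cor Phi} gives a two-sided bound on $\Phi = r^{\alpha/\beta}F$ and Lemma \ref{C0-est} bounds $r$, the mean curvature $H = \sigma_1 = F^{\beta}$ is pinched between two positive constants. Because $M_t$ is mean-convex, this reduces the problem to a one-sided upper bound on $\kappa_n := \kappa_{\max}$: once $\kappa_n \leq C_1$, mean-convexity together with $H \leq C$ gives $\kappa_i \geq H - (n-1)\kappa_n \geq -C_2$ for every $i$, so $|\kappa_i| \leq C$ follows.

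To bound $\kappa_n$ I would apply the parabolic maximum principle to an auxiliary function of the form
\begin{equation*}
W(\cdot,t) = \log \kappa_n + \Psi(u, r),
\end{equation*}
with $\Psi$ a judiciously chosen function of the support function and the radius (this is \eqref{aux-fun-1}). For each fixed $t$ pick a point $x_0$ realizing the spatial maximum, use Hamilton's standard perturbation trick to treat $\kappa_n$ as a smooth simple eigenvalue in a neighborhood of $x_0$, and diagonalize $\{h_i{}^j\}$ there. Specializing \eqref{evolution h} to $k=1$ is particularly clean because
\begin{equation*}
\dot F^{pq} = \tfrac{1}{\beta} H^{\frac{1}{\beta}-1} g^{pq}, \qquad \ddot F^{pq,rs} = \tfrac{1}{\beta}\!\left(\tfrac{1}{\beta}-1\right) H^{\frac{1}{\beta}-2} g^{pq} g^{rs},
\end{equation*}
so that combining this with the evolutions of $u$ from \eqref{Lu} and of $r$ from \eqref{s1:flow-rn} lets one compute $\mathcal{L} W$ explicitly at $(x_0,t)$. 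The identity $\nabla W = 0$ at $x_0$ amounts to $\nabla \kappa_n/\kappa_n = -\nabla \Psi$, which absorbs the troublesome gradient-of-curvature contributions coming from $\mathcal{L} \kappa_n$.

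The decisive positive term in $\partial_t \log \kappa_n$ is the Simons-type piece $r^{\alpha/\beta}\dot F^{pq} h_p{}^l h_{lq} = \tfrac{1}{\beta} r^{\alpha/\beta} H^{\frac{1}{\beta}-1}|A|^2$, which is bounded below by a positive multiple of $\kappa_n^2$ once $\kappa_n$ is large. The role of $\Psi$ is to arrange that $\mathcal{L}\Psi$ generates a negative contribution of comparable (or larger) order, so that $\mathcal{L} W(x_0,t) \leq 0$ as soon as $\kappa_n$ exceeds a threshold; standard maximum-principle reasoning, in the spirit of the proof of Lemma \ref{F2}, then bounds $W$ and hence $\kappa_n$.

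I expect the main obstacle to be the fully nonlinear contribution $r^{\alpha/\beta}\ddot F^{pq,rs}\nabla_n h_{pq}\nabla^n h_{rs} = \tfrac{1}{\beta}(\tfrac{1}{\beta}-1) r^{\alpha/\beta} H^{\frac{1}{\beta}-2}|\nabla_n H|^2$, which vanishes exactly when $\beta = 1$ (the case of Theorem C) but has the wrong sign when $\beta < 1$ and the favourable sign when $\beta > 1$. Using the critical equation to replace $|\nabla_n H|^2$ by $|\nabla \Psi|^2 \kappa_n^2$ (modulo controlled errors from the other $\nabla \kappa_i$), and choosing $\Psi$ so that its gradient and Laplacian produce the right balance -- together with the assumption $\alpha \geq \beta + 1$ which keeps the $r$-derivative terms in \eqref{evolution h} well-signed -- is the delicate point that makes the lemma go through uniformly for $\beta > 0$.
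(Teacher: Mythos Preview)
Your outline misidentifies the auxiliary function and, with it, the mechanism that makes the argument work. The paper's \eqref{aux-fun-1} is \emph{not} of the form $\log\kappa_n+\Psi(u,r)$; it is
\[
Q=\log|A|^2-2B\log(\Phi-a),\qquad B=1-\tfrac{a}{2c},\ a=\tfrac12\inf\Phi,\ c=\sup\Phi,
\]
so the correction term depends on $\Phi=r^{\a/\b}H^{1/\b}$, not merely on $u$ and $r$. This matters for the very obstacle you flagged. When $\b<1$ the bad $\ddot F$ contribution is essentially $+C\,|\nabla H^{1/\b}|^2/|A|$. In the paper the cure is that $-2B\,\mathcal L\log(\Phi-a)$ together with the critical equation $\nabla|A|^2/|A|^2=2B\,\nabla\Phi/(\Phi-a)$ and the elementary inequality \eqref{key-ineq-lem-4.3} leaves a \emph{strictly negative} gradient term $-\tfrac{a}{c}\bigl(1-\tfrac{a}{c}\bigr)\tfrac1\b r^{\a/\b}H^{1/\b-1}\,|\nabla\Phi|^2/(\Phi-a)^2$; expanding $|\nabla\Phi|^2$ then produces $-c_2|\nabla H^{1/\b}|^2$, which dominates the bad term as soon as $|A|$ is large. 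The point is that the carefully tuned constant $B\in(\tfrac12,1)$ is what makes the quadratic in $\nabla\Phi$ negative---with $B=1$ it would vanish.

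Your scheme with $\Psi(u,r)$ cannot reproduce this. The critical equation then only says $\nabla\kappa_n=-\kappa_n(\Psi_u\nabla u+\Psi_r\nabla r)$, which gives no handle on $\nabla H=\sum_i\nabla_i h_{ii}$; the ``controlled errors from the other $\nabla\kappa_i$'' are exactly the uncontrolled part. The good second-variation term in $\mathcal L\log\kappa_n$ is $-\sum_{r\ne n}\dot F^{pp}(\nabla_p h_{nr})^2/\bigl(\kappa_n(\kappa_n-\kappa_r)\bigr)$, of order $|\nabla A|^2/\kappa_n^2$, while the $\ddot F$ term is of order $(\nabla_n H)^2/\kappa_n$---one order of $\kappa_n$ too strong to be absorbed. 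In short, to kill the $\ddot F$ term you need the auxiliary function to contain $F$ (equivalently $\Phi$), so that the critical equation and the $-|\nabla\Phi|^2$ piece can interact with $\nabla F$; a correction depending only on $u$ and $r$ does not suffice.
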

\begin{proof}
In this case, $F = H^{\frac{1}{\b}}$, $\mathcal{L}=\partial_t-\frac{1}{\b}r^{\frac{\a}{\b}}H^{\frac{1}{\b}-1}\Delta$.
We observe that
\begin{align*}
\dot{F}^{pq}= \frac{1}{\b}H^{\frac{1}{\b}-1} g^{pq}, \quad
\ddot{F}^{pq,rs}= \frac{1-\b}{\b^2}H^{\frac{1}{\b}-2}g^{rs}g^{pq}.
\end{align*}
Thus,
\begin{align}\label{H''}
\ddot{F}^{pq,rs} \nabla_ih_{pq}\nabla^jh_{rs}
=-(\b-1)H^{-\frac{1}{\b}} \nabla_i H^{\frac{1}{\b}}\nabla^j H^{\frac{1}{\b}}.
\end{align}
Now we calculate the evolution of $|A|^2$. By \eqref{evolution h} and \eqref{H''}, we have
\begin{align}
\mathcal{L}|A|^2=&2h_j{}^i\mathcal{L}h_i{}^j-2\frac{1}{\b}r^{\frac{\a}{\b}}H^{\frac{1}{\b}-1}|\nabla A|^2
\nonumber\\
%---------------------------------
=&-2\f \frac{1}{\b}-1 \r \Phi h_i{}^lh_l{}^jh_j{}^i
+ 2\frac{1}{\b} r^{\frac{\a}{\b}} H^{\frac{1}{\b}-1}|A|^4
+2 r^{\frac{\a}{\b}}h_i{}^j \ddot{F}^{pq,rs} \nabla_ih_{pq}\nabla^jh_{rs}
\nonumber\\
&+ 4 \frac{\a}{\b} r^{\frac{\a}{\b}-1} h_j{}^i \nabla_i H^{\frac{1}{\b}} \nabla^j r +2\frac{\a}{\b} r^{\frac{\a}{\b}-1} H^{\frac{1}{\b}} h_j{}^i \nabla_i\nabla^jr+ 2\frac{\a}{\b} \f \frac{\a}{\b}-1 \r r^{\frac{\a}{\b}-2}H^{\frac{1}{\b}}h_j{}^i\nabla_ir\nabla^jr
\nonumber\\
&-2\gamma|A|^2-2\frac{1}{\b}r^{\frac{\a}{\b}}H^{\frac{1}{\b}-1}|\nabla A|^2
\nonumber\\
%---------------------------------
=&-2\f \frac{1}{\b}-1 \r \Phi h_i{}^lh_l{}^jh_j{}^i+ 2\frac{1}{\b} r^{\frac{\a}{\b}} H^{\frac{1}{\b}-1}|A|^4- 2\f\b-1 \r r^{\frac{\a}{\b}}H^{-\frac{1}{\b}}h_j{}^i\nabla_iH^{\frac{1}{\b}}\nabla^jH^{\frac{1}{\b}}
\nonumber\\
&+4 \frac{\a}{\b} r^{\frac{\a}{\b}-1} h_j{}^i \nabla_i H^{\frac{1}{\b}} \nabla^j r
+2\frac{\a}{\b} r^{\frac{\a}{\b}-1} H^{\frac{1}{\b}} h_j{}^i \nabla_i\nabla^jr
+ 2\frac{\a}{\b} \f \frac{\a}{\b}-1 \r r^{\frac{\a}{\b}-2}H^{\frac{1}{\b}}h_j{}^i\nabla_ir\nabla^jr
\nonumber\\
&-2\gamma|A|^2- 2\frac{1}{\b}r^{\frac{\a}{\b}}H^{\frac{1}{\b}-1}|\nabla A|^2\label{L A^2}.
\end{align}
From \eqref{phi}, we have
\begin{align}\label{L Phi}
\mathcal{L}\Phi=\frac{1}{\b}r^{\frac{\a}{\b}} H^{\frac{1}{\b}-1}\Phi|A|^2-\frac{\a}{\b}\frac{\Phi^2}{r^2}u+\frac{(\a-1) \gamma}{\b}\Phi.
\end{align}
Now we introduce a new auxiliary function
\begin{equation}\label{aux-fun-1}
Q=\log|A|^2-2B\log\f \Phi-a\r,
\end{equation}
where $B=1-\frac{a}{2c}$, $a=\frac{1}{2}\inf_{M \times [0,T)} \Phi>0$ and $c=\sup_{M \times [0,T)}\Phi>0$. The existence of $a$ and $c$ follows from Corollary \ref{cor Phi}. Then by using \eqref{L A^2} and \eqref{L Phi}, we obtain
\begin{align}\label{LQ 1}
\mathcal{L}Q=&
\frac{\mathcal{L}|A|^2}{|A|^2}
-2B\frac{\mathcal{L}\Phi}{\Phi-a}
+\frac{1}{\b}r^{\frac{\a}{\b}}H^{\frac{1}{\b}-1}\frac{|\nabla|A|^2|^2}{|A|^4}
-2B\frac{1}{\b}r^{\frac{\a}{\b}}H^{\frac{1}{\b}-1}\frac{|\nabla\Phi|^2}{\f\Phi-a\r^2}
\nonumber\\
%------------------------------------------
=&-2\f \frac{1}{\b}-1 \r \Phi \frac{h_i{}^lh_l{}^jh_j{}^i}{|A|^2}
- \frac{1}{\b}(2B\frac{\Phi}{\Phi-a}-2) r^{\frac{\a}{\b}} H^{\frac{1}{\b}-1}|A|^2
- 2\f\b-1 \r r^{\frac{\a}{\b}}H^{-\frac{1}{\b}}\frac{h_j{}^i}{|A|^2}\nabla_iH^{\frac{1}{\b}}\nabla^jH^{\frac{1}{\b}}
\nonumber\\
&+4 \frac{\a}{\b} r^{\frac{\a}{\b}-1} \frac{h_j{}^i}{|A|^2} \nabla_i H^{\frac{1}{\b}} \nabla^jr
+2\frac{\a}{\b} r^{\frac{\a}{\b}-1} H^{\frac{1}{\b}} \frac{h_j{}^i}{|A|^2} \nabla_i\nabla^jr
+ 2\frac{\a}{\b} \f \frac{\a}{\b}-1 \r r^{\frac{\a}{\b}-2}H^{\frac{1}{\b}}\frac{h_j{}^i}{|A|^2}\nabla_ir\nabla^jr
\nonumber\\
&-2\gamma-2\frac{1}{\b}r^{\frac{\a}{\b}}H^{\frac{1}{\b}-1}\frac{|\nabla A|^2}{|A|^2}
%\frac{a}{c-a}\frac{1}{\b} r^{\frac{\a}{\b}} H^{\frac{1}{\b}-1}|A|^2%
+2B\frac{\a}{\b}\frac{\Phi^2u}{r^2(\Phi-a)}
-2B\frac{(\a-1)\gamma}{\b}\frac{\Phi}{\Phi-a}
\nonumber\\
&+\frac{1}{\b}r^{\frac{\a}{\b}}H^{\frac{1}{\b}-1}\frac{|\nabla|A|^2|^2}{|A|^4}
-2B\frac{1}{\b}r^{\frac{\a}{\b}}H^{\frac{1}{\b}-1}\frac{| \nabla \Phi|^2}{\f\Phi-a\r^2}.
\end{align}
By the definition of $B$, %we can estimate the second term in \eqref{LQ 1} as
we have
\begin{align}\label{Bestimate}
2B\frac{\Phi}{\Phi-a}-2 =\f 2- \frac{a}{c}\r \frac{\Phi}{\Phi-a} -2
\geq \f 2- \frac{a}{c}\r \frac{c}{c-a}-2 = \frac{a}{c-a}.
\end{align}
%where we used the definition of $B$ and $c = \sup_{M \times [0,T)} \Phi$.
%The second inequality we used in \eqref{LQ 1} is
%As for the eighth term in \eqref{LQ 1}, by using a local orthonormal frame near one point p on $M_{t_0}$, we get
Using an orthonormal frame, it is easy to see
\begin{align}\label{key-ineq-lem-4.3}
\frac{\metric{ \nabla |A|^2}{\nabla \Phi}}{(\Phi-a)}
= 2\nabla_l h_{ij} \f h_{ij} \frac{\nabla_l \Phi}{\Phi-a} \r
\leq  |\nabla A|^2 + |A|^2 \frac{|\nabla \Phi|^2}{(\Phi-a)^2}.
\end{align}
%Clearly,
%\begin{align*}
%|\nabla A|^2 + |A|^2\frac{|\nabla \Phi|^2}{(\Phi-a)^2} \geq \frac{\metric{ \nabla %|A|^2}{\nabla \Phi}}{(\Phi-a)}.
%\end{align*}
In \eqref{LQ 1}, we use  \eqref{Bestimate} in the second term and use \eqref{key-ineq-lem-4.3} in the eighth term, then
\begin{equation}\label{LQ 3}
\begin{aligned}
\mathcal{L}Q\leq&-2\f \frac{1}{\b}-1 \r \Phi \frac{h_i{}^lh_l{}^jh_j{}^i}{|A|^2}
- \frac{a}{c-a}\frac{1}{\b} r^{\frac{\a}{\b}} H^{\frac{1}{\b}-1}|A|^2
- 2\f\b-1 \r r^{\frac{\a}{\b}}H^{-\frac{1}{\b}}\frac{h_j{}^i}{|A|^2}\nabla_iH^{\frac{1}{\b}}\nabla^jH^{\frac{1}{\b}}
\\
&+4 \frac{\a}{\b} r^{\frac{\a}{\b}-1} \frac{h_j{}^i}{|A|^2} \nabla_i H^{\frac{1}{\b}} \nabla^jr
+2\frac{\a}{\b} r^{\frac{\a}{\b}-1} H^{\frac{1}{\b}} \frac{h_j{}^i}{|A|^2} \nabla_i\nabla^jr
+ 2\frac{\a}{\b} \f \frac{\a}{\b}-1 \r r^{\frac{\a}{\b}-2}H^{\frac{1}{\b}}\frac{h_j{}^i}{|A|^2}\nabla_ir\nabla^jr
\\
&-2\gamma+2\frac{1}{\b}r^{\frac{\a}{\b}}H^{\frac{1}{\b}-1}\frac{|\nabla\Phi|^2}{\f\Phi-a\r^2}-2\frac{1}{\b}r^{\frac{\a}{\b}}H^{\frac{1}{\b}-1}\frac{ \metric{\nabla\Phi}{\nabla|A|^2} }{|A|^2\f \Phi-a\r }
+2B\frac{\a}{\b}\frac{\Phi^2u}{r^2(\Phi-a)}\\
&-2B\frac{(\a-1)\gamma}{\b}\frac{\Phi}{\Phi-a}
+\frac{1}{\b}r^{\frac{\a}{\b}}H^{\frac{1}{\b}-1}\frac{|\nabla|A|^2|^2}{|A|^4}
-2B\frac{1}{\b}r^{\frac{\a}{\b}}H^{\frac{1}{\b}-1}\frac{| \nabla \Phi|^2}{\f\Phi-a\r^2}.
 \end{aligned}
 \end{equation}
At the maximum point p of $Q$ on $M_t$, we have the critical equation
 \begin{align}\label{critical equation}
 \frac{\nabla|A|^2}{|A|^2}-2B\frac{\nabla\Phi}{\Phi-a}=0.
 \end{align}
Plugging \eqref{critical equation} and $B=1-\frac{a}{2c}$ into \eqref{LQ 3}, we obtain
\begin{align*}
\mathcal{L} Q
\leq&-2\f \frac{1}{\b}-1 \r \Phi \frac{h_i{}^lh_l{}^jh_j{}^i}{|A|^2}
- \frac{a}{c-a}\frac{1}{\b} r^{\frac{\a}{\b}} H^{\frac{1}{\b}-1}|A|^2
- 2\f\b-1 \r r^{\frac{\a}{\b}}H^{-\frac{1}{\b}}\frac{h_j{}^i}{|A|^2}\nabla_iH^{\frac{1}{\b}}\nabla^jH^{\frac{1}{\b}}
\\
&+4 \frac{\a}{\b} r^{\frac{\a}{\b}-1} \frac{h_j{}^i}{|A|^2} \nabla_i H^{\frac{1}{\b}} \nabla^jr
+2\frac{\a}{\b} r^{\frac{\a}{\b}-1} H^{\frac{1}{\b}} \frac{h_j{}^i}{|A|^2} \nabla_i\nabla^jr
+ 2\frac{\a}{\b} \f \frac{\a}{\b}-1 \r r^{\frac{\a}{\b}-2}H^{\frac{1}{\b}}\frac{h_j{}^i}{|A|^2}\nabla_ir\nabla^jr
\\
&-2\gamma- \frac{a}{c}\f 1-\frac{a}{c}\r\frac{1}{\b}r^{\frac{\a}{\b}}H^{\frac{1}{\b}-1}\frac{|\nabla\Phi|^2}{\f\Phi-a\r^2}
%\frac{a}{c-a}\frac{1}{\b} r^{\frac{\a}{\b}} H^{\frac{1}{\b}-1}|A|^2%
+\f 2-\frac{a}{c} \r \frac{\a}{\b}\frac{\Phi^2u}{r^2(\Phi-a)}
-\f 2-\frac{a}{c} \r \frac{(\a-1)\gamma}{\b}\frac{\Phi}{\Phi-a}.
%--------------------------------------------------------
%\mathcal{L}Q \leq &-2\f \frac{1}{\b}-1 \r \Phi \frac{h_i{}^lh_l{}^jh_i{}^j}{|A|^2}- 2\f\b-1 \r r^{\frac{\a}{\b}}H^{-\frac{1}{\b}}\frac{h_i{}^j}{|A|^2}\nabla_iH^{\frac{1}{\b}}\nabla^jH^{\frac{1}{\b}}+4 \frac{\a}{\b} r^{\frac{\a}{\b}-1} \frac{h_i{}^j}{|A|^2} \nabla_i H^{\frac{1}{\b}} \nabla^j r\nonumber\\
%&- \frac{a}{c-a}\frac{1}{\b} r^{\frac{\a}{\b}} H^{\frac{1}{\b}-1}|A|^2- \frac{a}{c}\f 1-\frac{a}{c}\r\frac{1}{\b}r^{\frac{\a}{\b}}H^{\frac{1}{\b}-1}\frac{|\nabla\Phi|^2}{\f\Phi-a\r^2}+2\frac{\a}{\b} r^{\frac{\a}{\b}-1} H^{\frac{1}{\b}} \frac{h_i{}^j}{|A|^2} \nabla_i\nabla^jr\nonumber\\
%&+ 2\frac{\a}{\b} \f \frac{\a}{\b}-1 \r r^{\frac{\a}{\b}-2}H^{\frac{1}{\b}}\frac{h_i{}^j}{|A|^2}\nabla_ir\nabla^jr+\f 2-\frac{a}{c} \r \frac{\a}{\b}\frac{\Phi^2 u}{r^2(\Phi-a)}-\f 2-\frac{a}{c} \r \gamma \f\frac{\a}{\b}-\frac{1}{\b}\r \frac{\Phi}{\Phi-a} -2\gamma.
\end{align*}
Diagonalizing the second fundamental form at p with an orthonormal frame $\{ e_1, \cdots, e_n\}$, we have
\begin{align}\label{LQ 2}
\mathcal{L} Q
\leq&-2\f \frac{1}{\b}-1 \r \Phi \frac{\sum_i\kappa_i{}^3}{|A|^2}
- \frac{a}{c-a}\frac{1}{\b} r^{\frac{\a}{\b}} H^{\frac{1}{\b}-1}|A|^2
- 2\f\b-1 \r r^{\frac{\a}{\b}}H^{-\frac{1}{\b}}\sum_i\frac{\kappa_i}{|A|^2}\f\nabla_iH^{\frac{1}{\b}}\r^2
\nonumber\\
&+ 4\frac{\a}{\b} r^{\frac{\a}{\b}-1} \sum_i\frac{\kappa_i}{|A|^2} \nabla_i H^{\frac{1}{\b}} \nabla_i r
+2\frac{\a}{\b} r^{\frac{\a}{\b}-1} H^{\frac{1}{\b}}\sum_i \frac{\kappa_i}{|A|^2} \nabla_i\nabla_ir
+ 2\frac{\a}{\b} \f \frac{\a}{\b}-1 \r r^{\frac{\a}{\b}-2}H^{\frac{1}{\b}}\sum_i\frac{\kappa_i}{|A|^2}\f\nabla_ir\r^2
\nonumber\\
&-2\gamma- \frac{a}{c}\f 1-\frac{a}{c}\r\frac{1}{\b}r^{\frac{\a}{\b}}H^{\frac{1}{\b}-1}\frac{|\nabla\Phi|^2}{\f\Phi-a\r^2}
%\frac{a}{c-a}\frac{1}{\b} r^{\frac{\a}{\b}} H^{\frac{1}{\b}-1}|A|^2%
+\f 2-\frac{a}{c} \r \frac{\a}{\b}\frac{\Phi^2u}{r^2(\Phi-a)}
-\f 2-\frac{a}{c} \r \frac{(\a-1)\gamma}{\b}\frac{\Phi}{\Phi-a}.
%\\
%------------------------------------------------------
%\mathcal{L}Q \leq &-2\f \frac{1}{\b}-1 \r \Phi \frac{\sum_i\kappa_i{}^3}{|A|^2}
%- 2\f\b-1 \r r^{\frac{\a}{\b}}H^{-\frac{1}{\b}}\sum_i\frac{\kappa_i}{|A|^2}\f\nabla_iH^{\frac{1}{\b}}\r^2+ 4\frac{\a}{\b} r^{\frac{\a}{\b}-1} \sum_i\frac{\kappa_i}{|A|^2} \nabla_i H^{\frac{1}{\b}} \nabla_i r\nonumber\\
% &- \frac{a}{c-a}\frac{1}{\b} r^{\frac{\a}{\b}} H^{\frac{1}{\b}-1}|A|^2- \frac{a}{c}\f 1-\frac{a}{c}\r\frac{1}{\b}r^{\frac{\a}{\b}}H^{\frac{1}{\b}-1}\frac{|\nabla\Phi|^2}{\f\Phi-a\r^2}+2\frac{\a}{\b} r^{\frac{\a}{\b}-1} H^{\frac{1}{\b}}\sum_i \frac{\kappa_i}{|A|^2} \nabla_i\nabla_ir\nonumber\\
% &+ 2\frac{\a}{\b} \f \frac{\a}{\b}-1 \r r^{\frac{\a}{\b}-2}H^{\frac{1}{\b}}\sum_i\frac{\kappa_i}{|A|^2}\f\nabla_ir\r^2+\f 2-\frac{a}{c} \r \frac{\a}{\b}\frac{\Phi^2 u}{r^2(\Phi-a)}-\f 2-\frac{a}{c} \r \gamma \f\frac{\a}{\b}-\frac{1}{\b} \r \frac{\Phi}{\Phi-a}-2\gamma.
\end{align}
Denote $|\kappa|_{\max}=\max\lbrace|\kappa_1|,\cdots,|\kappa_n|\rbrace$, then
\begin{align}
\frac{|\kappa_i|}{|A|^2}\leq& \frac{1}{|\k|_{\max}}\leq\frac{\sqrt{n}}{|A|}, \quad \forall \ i=1, \cdots,n, \label{k_i est}\\
\frac{|\sum_i\kappa_i{}^3|}{|A|^2}\leq &\frac{n|\k|_{\max}^3}{|\k|_{\max}^2} = n|\k|_{\max}\leq n|A|. \label{k_i^3 est}
\end{align}
By using \eqref{k_i est} and \eqref{k_i^3 est}, we can estimate the first line of \eqref{LQ 2} as
\begin{equation}
\begin{aligned} \label{LQ 1l}
&-2\f \frac{1}{\b}-1 \r \Phi \frac{\sum_i\kappa_i{}^3}{|A|^2}
- \frac{a}{c-a}\frac{1}{\b} r^{\frac{\a}{\b}} H^{\frac{1}{\b}-1}|A|^2
- 2\f\b-1 \r r^{\frac{\a}{\b}}H^{-\frac{1}{\b}}\sum_i\frac{\kappa_i}{|A|^2}\f\nabla_iH^{\frac{1}{\b}}\r^2
\\
\leq& -c_1 |A|^2 +C|A| +C \frac{|\nabla H^{\frac{1}{\b}}|^2}{|A|}
\end{aligned}
\end{equation}
for some $C$, $c_1>0$. By direct computation, we also obtain
\begin{align}
\nabla_i r=&\frac{\nabla_i \metric{X}{X}}{2r}=\frac{ \metric{X}{e_i}}{r}\leq 1, \quad |\nabla r| \leq 1, \label{ri}\\
\nabla_j \nabla_i r=&\frac{\nabla_j \metric{X}{e_i}}{r}-\frac{\nabla_i r\nabla_j r}{r}=\frac{\delta_{ij}}{r}-\frac{u}{r}h_{ij}-\frac{\nabla_i r\nabla_j r}{r}. \label{rij}
\end{align}
Thus the second line of \eqref{LQ 2} becomes
\begin{align}\label{LQ 2l}
&4\frac{\a}{\b} r^{\frac{\a}{\b}-1} \sum_i\frac{\kappa_i}{|A|^2} \nabla_i H^{\frac{1}{\b}} \nabla_i r
+2\frac{\a}{\b} r^{\frac{\a}{\b}-1} H^{\frac{1}{\b}}\sum_i \frac{\kappa_i}{|A|^2} \nabla_i\nabla_ir
+ 2\frac{\a}{\b} \f \frac{\a}{\b}-1 \r r^{\frac{\a}{\b}-2}H^{\frac{1}{\b}}\sum_i\frac{\kappa_i}{|A|^2}\f\nabla_ir\r^2\nonumber\\
=& 4\frac{\a}{\b} r^{\frac{\a}{\b}-1} \sum_i\frac{\kappa_i}{|A|^2} \nabla_i H^{\frac{1}{\b}} \nabla_i r +2\frac{\a}{\b} r^{\frac{\a}{\b}-2} H^{\frac{1}{\b}} \sum_i \frac{\kappa_i}{|A|^2}-2\frac{\a}{\b} r^{\frac{\a}{\b}-2}uH^{\frac{1}{\b}}\nonumber\\
&+ 2\frac{\a}{\b} \f \frac{\a}{\b}-2 \r r^{\frac{\a}{\b}-2}H^{\frac{1}{\b}}\sum_i \frac{\kappa_i}{|A|^2}\f\nabla_i r \r ^2 \nonumber\\
\leq& C+ C\frac{1}{|A|}+C \frac{|\nabla H^{\frac{1}{\b}}|}{|A|}
\end{align}
for some $C>0$, where we used \eqref{k_i est} in the second inequality. Since $\Phi=r^{\frac{\a}{\b}}H^\frac{1}{\b}$, we obtain
\begin{align*}
|\nabla\Phi|^2=|r^{\frac{\a}{\b}}\nabla H^{\frac{1}{\b}}+\frac{\a}{\b}r^{\frac{\a}{\b}-1}H^{\frac{1}{\b}}\nabla r|^2=r^{\frac{2\a}{\b}}|\nabla H^{\frac{1}{\b}}|^2+2\frac{\a}{\b}r^{\frac{2\a}{\b}-1}H^{\frac{1}{\b}}\nabla_i H^{\frac{1}{\b}}\nabla_i r+\frac{\a^2}{\b^2}r^{\frac{2\a}{\b}-2}H^{\frac{2}{\b}}|\nabla r|^2.
\end{align*}
Therefore, for the third line of \eqref{LQ 2}, we get
\begin{align}\label{LQ 3l}
&-\frac{a}{c}\f 1-\frac{a}{c}\r\frac{1}{\b}r^{\frac{\a}{\b}}H^{\frac{1}{\b}-1}\frac{|\nabla\Phi|^2}{\f\Phi-a\r^2}-2\gamma +\f 2-\frac{a}{c} \r \frac{\a}{\b}\frac{\Phi^2u}{r^2(\Phi-a)}
-\f 2-\frac{a}{c} \r \frac{(\a-1)\gamma}{\b}\frac{\Phi}{\Phi-a}\nonumber\\
=&-\frac{a}{c}\f 1-\frac{a}{c}\r\frac{1}{\b}r^{\frac{3\a}{\b}}H^{\frac{1}{\b}-1}\frac{|\nabla H^{\frac{1}{\b}}| ^2}{\f\Phi-a\r^2}- \frac{a}{c}\f 1-\frac{a}{c}\r \frac{2\a}{\b^2}r^{\frac{3\a}{\b}-1}H^{\frac{2}{\b}-1}\frac{ \metric{\nabla H^{\frac{1}{\b}}}{\nabla r }}{\f\Phi-a\r^2}\nonumber\\
&- \frac{a}{c}\f 1-\frac{a}{c}\r \frac{\a^2}{\b^3}r^{\frac{3\a}{\b}-2}H^{\frac{3}{\b}-1}\frac{|\nabla r|^2}{\f\Phi-a\r^2}-2\gamma +\f 2-\frac{a}{c} \r \frac{\a}{\b}\frac{\Phi^2u}{r^2(\Phi-a)}
-\f 2-\frac{a}{c} \r \frac{(\a-1)\gamma}{\b}\frac{\Phi}{\Phi-a}\nonumber\\
\leq& -c_2 |\nabla H^{\frac{1}{\b}}|^2 +C |\nabla H^{\frac{1}{\b}}| +C
\end{align}
for some constants $C$, $c_2>0$.
%-----------------------------------2021.01.07
Combining \eqref{LQ 1l}, \eqref{LQ 2l} with \eqref{LQ 3l}, we arrive at
\begin{align*}
\mathcal{L} Q \leq -c_1|A|^2-c_2|\nabla H^{\frac{1}{\b}}| ^2+C|A|+C+C\frac{1}{|A|}+C|\nabla H^{\frac{1}{\b}}| +C\frac{|\nabla H^{\frac{1}{\b}}| ^2}{|A|}
\end{align*}
where $C$, $c_1$, $c_2$ only depend on the $C^0$, $C^1$ estimates and the bounds of $H$.
If $|A| > \frac{2}{c_2}$ at p, we have %If $|\nabla H^{\frac{1}{\b}}|$ is also large enough at p, Then
%\begin{align*}
%-c_2|\nabla H^{\frac{1}{\b}}|^2 +C|\nabla H^{\frac{1}{\b}}| +C\frac{|\nabla H^{\frac{1}{\b}}| ^2}{|A|}<0.
%\end{align*}
%Otherwise, we have $-c_2|\nabla H^{\frac{1}{\b}}|^2 +C|\nabla H^{\frac{1}{\b}}| +C\frac{|\nabla H^{\frac{1}{\b}}| ^2}{|A|}$ is bounded above by a constant only depends on the initial hypersurface and $\a$, $\b$. Hence, we obtain
\begin{align*}
\mathcal{L}Q \leq -c_1|A|^2+C|A|+C,
\end{align*}
%such that $|A|^2\leq C$,
which induces $Q$ is bounded along the flow, since $2B \log (\Phi-a)$ is bounded. Then, we get the uniform bound of the principal curvatures from the definition of $Q$.
\end{proof}

Before analyzing the case of $k\geq 2$, we first state a crucial lemma (see \cite[Lemma 3.2]{GLL11}). For readers' convenience, we give a proof here.
\begin{lem}\label{liyanyan}
If $\{h_i{}^j\}\in\Gamma^+_{k}$ and $k\geq2$, we have the following inequality,
\begin{equation}
\ddot{\s_k}^{pq,rs}\nabla_ih_{pq}\nabla_ih_{rs}\leq-\s_k \f \frac{\nabla_i\s_k}{\s_k}-\frac{\nabla_iH}{H} \r\f  \f \frac{2-k}{k-1} \r\frac{\nabla_i\s_k}{\s_k}- \f \frac{k}{k-1} \r\frac{\nabla_iH}{H} \r .
\end{equation}
\end{lem}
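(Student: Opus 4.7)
\medskip

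\noindent\textbf{Proof proposal.} The plan is to exploit the concavity property already recorded in Section~\ref{sec:2}: the function
\[
G(h) := \left(\frac{\sigma_k(h)}{\sigma_1(h)}\right)^{\frac{1}{k-1}} = \left(\frac{\sigma_k}{H}\right)^{\frac{1}{k-1}}
\]
is concave on $\Gamma_k^+$. Equivalently, as a function of the entries $h_{pq}$, its Hessian satisfies $\ddot{G}^{pq,rs}\xi_{pq}\xi_{rs}\leq 0$ for every symmetric tensor $\xi$. From the definition we have the algebraic identity $\sigma_k = H\,G^{k-1}$.

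First I would fix a direction $v_{pq}$ (which later will be specialized to $\nabla_i h_{pq}$) and consider the one-variable function $f(t) = \sigma_k(h+tv) = H(h+tv)\,G(h+tv)^{k-1}$. Differentiating twice via the product rule, and using the crucial observation that $H$ is \emph{linear} in the matrix entries (so $H''(t)\equiv 0$), I get
\[
f''(0) = 2(k-1)\,H'\,G^{k-2}\,G' \;+\; (k-1)(k-2)\,H\,G^{k-3}(G')^2 \;+\; (k-1)\,H\,G^{k-2}\,G''(0).
\]
Since $G''(0) = \ddot G^{pq,rs}v_{pq}v_{rs}\leq 0$ by concavity, the last term is $\leq 0$ and can be dropped to produce an upper bound.

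Next I would factor $\sigma_k = HG^{k-1}$ out of the remaining two terms, obtaining
\[
\ddot{\sigma_k}^{pq,rs}v_{pq}v_{rs} \;\leq\; \sigma_k\left[\,2(k-1)\,\frac{H'}{H}\,\frac{G'}{G} \;+\; (k-1)(k-2)\left(\frac{G'}{G}\right)^{2}\right].
\]
Now I specialize $v = \nabla_i h$, so that $H' = \nabla_i H$ and $\sigma_k' = \nabla_i \sigma_k$. Logarithmically differentiating $\sigma_k = HG^{k-1}$ gives the key relation
\[
\frac{G'}{G} = \frac{1}{k-1}\left(\frac{\nabla_i\sigma_k}{\sigma_k} - \frac{\nabla_i H}{H}\right).
\]
Substituting this into the bound and carrying out a direct algebraic simplification (expanding both sides in $a:=\nabla_i H/H$ and $b:=\nabla_i\sigma_k/\sigma_k$ reduces matters to verifying
$2(k-1)a(b-a) + (k-2)(b-a)^2 = -(b-a)\bigl((2-k)b - ka\bigr)$, which is a routine polynomial identity) recovers the stated inequality.

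The main obstacle is essentially bookkeeping: one must be careful that the $\ddot{}$ notation is used consistently for the Hessian with respect to the independent variables $h_{pq}$, that $H$ is truly linear in these variables so that $\ddot H = 0$ (and hence the $H''$ contribution drops), and that the algebraic rearrangement in the final step correctly distributes the factors of $\sigma_k$ and $(k-1)$. Once these points are handled, the inequality follows purely from the concavity of $(\sigma_k/\sigma_1)^{1/(k-1)}$ recorded in Section~\ref{sec:2}, with no further curvature hypothesis beyond $\{h_i{}^j\}\in\Gamma_k^+$.
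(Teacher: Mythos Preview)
Your proposal is correct and is essentially the paper's own argument: both proofs rest solely on the concavity of $G=(\sigma_k/H)^{1/(k-1)}$ on $\Gamma_k^+$ and a chain-rule computation relating $\ddot\sigma_k$, $\nabla_i\sigma_k$, and $\nabla_iH$. The only cosmetic difference is that the paper expands the Hessian of $G$ directly and reads off the inequality for $\ddot\sigma_k$, whereas you invert the relation to $\sigma_k=HG^{k-1}$, differentiate twice using $H''=0$, and drop the $G''\le 0$ term---these are two equivalent bookkeepings of the same identity.
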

\begin{proof}
We know from \cite[p. 23]{And07} that $(\frac{\s_k}{H})^{\frac{1}{k-1}}$ is concave. So we have
\begin{align*}
0\geq&\frac{\partial^2}{\partial h_{pq}\partial h_{rs}}\f\frac{\s_k}{H}\r^{\frac{1}{k-1}}\nabla_ih_{pq}\nabla_ih_{rs}\\
=& \frac{\partial }{\partial h_{rs}}\f\frac{1}{k-1}\f\frac{\s_k}{H}\r^{\frac{1}{k-1}-1}\dot{\f\frac{\s_k}{H}\r}^{pq} \r \nabla_ih_{pq}\nabla_ih_{rs}\\
    =&\frac{1}{k-1}\f\frac{\s_k}{H}\r^{\frac{1}{k-1}-1} \frac{\partial }{\partial h_{rs}}\f\frac{\dot{\s_k}^{pq}}{H}-\frac{\s_k\dot{H}^{pq}}{H^2}\r \nabla_ih_{pq}\nabla_ih_{rs}\\
    &+\frac{1}{k-1}\f\frac{1}{k-1}-1\r\f\frac{\s_k}{H}\r^{\frac{1}{k-1}-2}\f\frac{\dot{\s_k}^{rs}}{H}-\frac{\s_k\dot{H}^{rs}}{H^2}\r\f\frac{\dot{\s_k}^{pq}}{H}-\frac{\s_k\dot{H}^{pq}}{H^2}\r \nabla_ih_{pq}\nabla_ih_{rs}.
\end{align*}
That is,
\begin{align*}
\frac{\ddot{\s_k}^{pq,rs}\nabla_ih_{pq}\nabla_ih_{rs}}{\s_k}-\frac{2\nabla_i\s_k\nabla_iH}{H\s_k}+\frac{2|\nabla H|^2}{H^2}+\f\frac{1}{k-1}-1\r\f\frac{\nabla_i\s_k}{\s_k}-\frac{\nabla_iH}{H}\r\f\frac{\nabla_i\s_k}{\s_k}-\frac{\nabla_iH}{H}\r\leq 0.
\end{align*}
Hence,
\begin{align*}
\ddot{\s_k}^{pq,rs}\nabla_ih_{pq}\nabla_ih_{rs}\leq-\s_k\f\frac{\nabla_i\s_k}{\s_k}-\frac{\nabla_iH}{H}\r\f \f\frac{2-k}{k-1}\r\frac{\nabla_i\s_k}{\s_k}-\f\frac{k}{k-1}\r\frac{\nabla_iH}{H}\r.
\end{align*}
\end{proof}

\begin{lem}\label{Lemma 4.5}
Under the flow \eqref{s1:flow-n}, when $k\geq2$ and $\b \in (0,1] \cup \{ k\} $, the principal curvatures of the $k$-convex solution have a uniform bound, i.e.
\begin{equation*}
|\kappa_i|\leq C \quad i=1,\cdots,n.
\end{equation*}
\end{lem}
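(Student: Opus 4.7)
The plan is to mimic the strategy of Lemma \ref{Lem 4.3}, but replace $\log |A|^2$ by $\log \kappa_{\max}$, i.e. I would take the new auxiliary function to be of the form
\begin{equation*}
Q = \log \kappa_{\max} - B\log(\Phi - a),
\end{equation*}
with $a = \tfrac{1}{2}\inf_{M\times[0,T)}\Phi$ and $B$ a constant chosen as in Lemma \ref{Lem 4.3}. Since $\kappa_{\max}$ is only Lipschitz, at a point $p\in M_{t_0}$ where $Q$ achieves a spatial maximum I would diagonalize $h_i{}^j$ so that $\kappa_{\max} = h_1{}^1$ has multiplicity one, and apply the standard perturbation (working with $h_i{}^j\xi^i\xi_j/g_{ij}\xi^i\xi^j$ for a fixed vector $\xi$ equal to the eigenvector at $p$). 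The evolution of $\kappa_{\max}$ then follows from the formula \eqref{evolution h} for $\partial_t h_i{}^j$.

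At the maximum of $Q$, after combining $\mathcal{L}(\log\kappa_{\max})$ with $-B\,\mathcal{L}\log(\Phi-a)$ using \eqref{phi} and using the critical equation
\begin{equation*}
\frac{\nabla \kappa_{\max}}{\kappa_{\max}} = B\,\frac{\nabla \Phi}{\Phi - a},
\end{equation*}
the first-order gradient terms in $r$ and $\Phi$ can be absorbed in the $C^0$, $C^1$ bounds and in Corollary \ref{cor Phi}. The favorable cubic term $r^{\a/\b}\dot{F}^{pq}h_p{}^\ell h_{\ell q}$ is bounded below via the Newton--MacLaurin argument already used in Lemma \ref{F2} by $c\,F^{1+\b/k}$, which, by $k$-convexity and $\sigma_k^{1/k}\leq c'\kappa_{\max}$, dominates $c''\kappa_{\max}^2$. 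As in Lemma \ref{Lem 4.3}, one hopes to arrive at
\begin{equation*}
\mathcal{L} Q \leq - c_1 \kappa_{\max}^{2} + C\kappa_{\max} + C,
\end{equation*}
from which a uniform upper bound on $\kappa_{\max}$ follows. The lower bounds on the remaining principal curvatures then follow from $\k\in\Gamma_k^+$: together with $\sigma_1\geq c\sigma_k^{1/k}>0$ (Newton--MacLaurin) and $\sigma_1\leq n\kappa_{\max}$, the estimate $\kappa_n\geq\sigma_1-(n-1)\kappa_{\max}$ yields $|\k_i|\le C$ for all $i$.

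The main obstacle is the second-order term $r^{\a/\b}\ddot{F}^{pq,rs}\nabla_1 h_{pq}\nabla_1 h_{rs}$, which for $k\ge 2$ is genuinely delicate (in contrast with $k=1$, where the formula \eqref{H''} is trivial). I would expand
\begin{equation*}
\ddot{F}^{pq,rs} = \tfrac{1}{\b}\sigma_k^{\frac{1}{\b}-1}\,\ddot{\sigma}_k^{pq,rs} + \tfrac{1}{\b}\bigl(\tfrac{1}{\b}-1\bigr)\sigma_k^{\frac{1}{\b}-2}\,\dot{\sigma}_k^{pq}\dot{\sigma}_k^{rs},
\end{equation*}
and bound the first piece using Lemma \ref{liyanyan} to produce a quadratic form in $\bigl(\tfrac{\nabla\sigma_k}{\sigma_k},\tfrac{\nabla H}{H}\bigr)$, while the second piece contributes an additional $(\dot\sigma_k\cdot\nabla h)^2$ term with sign determined by $\tfrac{1}{\b}-1$. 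The critical equation rewrites $\nabla\sigma_k/\sigma_k$ in terms of $\nabla \Phi/(\Phi-a)$ and hence (modulo bounded terms in $\nabla r$) in terms of $\nabla\kappa_{\max}/\kappa_{\max}$. The delicate task is to choose the signs so that the resulting quadratic form in $\nabla\kappa_{\max}/\kappa_{\max}$ and $\nabla H/H$ is negative semi-definite; a direct inspection of the coefficients shows that this happens exactly when $0<\b\le 1$ (so that $\tfrac{1}{\b}-1\geq 0$ aligns with the Lemma \ref{liyanyan} estimate) or when $\b=k$ (which kills the coefficient $\tfrac{2-k}{k-1}$ after suitable rearrangement), and this is precisely the reason the hypothesis $\b\in(0,1]\cup\{k\}$ appears in the statement. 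Outside these two ranges, the quadratic form is indefinite and the scheme breaks down, explaining also why the main theorems of the paper split into these two cases.
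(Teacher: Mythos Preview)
Your proposal has a genuine gap in the handling of the second-order term, and it stems from the choice of auxiliary function. The paper does \emph{not} use $\log\kappa_{\max}-B\log(\Phi-a)$; it uses
\[
Q=\frac{H}{\Phi-a},\qquad a=\tfrac12\inf\Phi,
\]
and this choice is essential. At a maximum of $Q$ the critical equation reads $\dfrac{\nabla H}{H}=\dfrac{\nabla\Phi}{\Phi-a}$, so \emph{both} quantities appearing in Lemma~\ref{liyanyan}, namely $\nabla\sigma_k/\sigma_k=\b\,\nabla F/F$ and $\nabla H/H$, are expressible in terms of $\nabla F$ (and bounded $\nabla r$). Substituting, the $\ddot F$ contribution becomes a one-variable quadratic in $|\nabla F|$ whose leading coefficient is
\[
-\frac{k}{\b(k-1)}\Bigl(\frac{a}{\Phi-a}\Bigr)^{2}+\frac{2(\b-k)}{\b(k-1)}\frac{a}{\Phi-a}-\frac{(1-\b)(k-\b)}{\b(k-1)},
\]
which is uniformly negative precisely when $\b\in(0,1]\cup\{k\}$. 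With your auxiliary function, the critical equation gives $\nabla\kappa_{\max}/\kappa_{\max}$ in terms of $\nabla\Phi$, but says nothing about $\nabla H/H$; the quadratic form from Lemma~\ref{liyanyan} then remains a two-variable form in $(\nabla\sigma_k/\sigma_k,\,\nabla H/H)$, and a direct check shows this form is negative semidefinite only for $\b\ge k$, not for $\b\in(0,1]$. So the case $0<\b\le1$ cannot be closed by your scheme.

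There is a second error. Your assertion that the ``cubic'' term $r^{\a/\b}\dot F^{pq}h_p{}^{\ell}h_{\ell q}\ge cF^{1+\b/k}$ ``dominates $c''\kappa_{\max}^{2}$'' is false: $F$ is already two-sided bounded by Lemmas~\ref{F}--\ref{F2}, so $F^{1+\b/k}$ is just a constant, and the inequality $\sigma_k^{1/k}\le c'\kappa_{\max}$ goes the wrong way. In the paper the needed quadratic negativity comes from a different source: since $Q=H/(\Phi-a)$, the term $r^{\a/\b}\dot F^{pq}h_p{}^{\ell}h_{\ell q}$ enters $\mathcal L Q$ multiplied by an extra factor $H$, and then $\dot f^{1}\kappa_1\ge\frac{k}{n\b}F$ together with $\kappa_1\ge H/n$ gives $-\dfrac{a}{(\Phi-a)^2}r^{\a/\b}H\sum_i\dot f^{i}\kappa_i^{2}\le -c_1H^{2}$. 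Finally, bounding $H$ suffices because for $k\ge2$ and $\kappa\in\Gamma_k^{+}$ one has $H^{2}=|A|^{2}+2\sigma_2>|A|^{2}$, so $|\kappa_i|\le|A|<H$.
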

\begin{proof}
As $\{h_i{}^j\}\in\Gamma^+_k$, $k \geq 2$, we have $H>0$, $\s_2>0$, then
\begin{equation*}
H^2=2\s_2+|A|^2>|A|^2.
\end{equation*}
So we only need to prove that $H$ has an upper bound.

We introduce a new auxiliary function
\begin{equation}\label{aux-fun-2}
 Q=\frac{H}{\Phi-a},
\end{equation}
where $\Phi=r^{\frac{\alpha}{\beta}}\s_k^{\frac{1}{\b}}$ and $a=\frac{1}{2}\inf_{M \times [0,T)}\Phi$. Using the evolution equations \eqref{evolution h} and \eqref{phi}, we obtain
%\begin{align*}
%\mathcal{L}H=&-\f\frac{k}{\b}-1\r\Phi|A|^2+r^{\frac{\a}{\b}}\dot{F}^{pq}h_p{}^lh_{lq}H+r^{\frac{\a}{\b}}\ddot{F}^{pq,rs}%\nabla_ih_{pq}\nabla_ih_{rs}+2\frac{\a}{\b}r^{\frac{\a}{\b}-1}\nabla_iF\nabla_ir \\
%&+\frac{\a}{\b}r^{\frac{\a}{\b}-1}F\Delta r+\frac{\a}{\b}\f\frac{\a}{\b}-1\r r^{\frac{\a}{\b}-2}F|\nabla r|^2-\gamma H,\\
%\mathcal{L}\Phi=& r^{\frac{\a}{\b}}\Phi\dot{F}^{ij}h_i^{k}h_{kj}-\frac{\a}{\b}\frac{\Phi^2}{r^2}u+\frac{\f\a-k\r \gamma}{\b}\Phi.
%\end{align*}
\begin{equation}\label{LQ gen 1}
\begin{aligned}
\mathcal{L}Q=&\frac{\mathcal{L}H}{\Phi-a}  - \frac{H\mathcal{L}\Phi}{(\Phi-a)^2}+\frac{2r^{\frac{\a}{\b}} \dot{F}^{ij} \nabla_i H \nabla_j \Phi }{ \f \Phi-a \r^2} - \frac{2r^{\frac{\a}{\b}} H \dot{F}^{ij} \nabla_i \Phi \nabla_j \Phi }{ \f \Phi-a \r^3}\\
  =& - \f \frac{k}{\b}-1 \r \frac{\Phi}{\Phi-a} |A|^2 - \frac{a}{\f \Phi-a \r^2} r^{\frac{\a}{\b}} H \dot{F}^{pq} h_p{}^l h_{lq}+\frac{r^{\frac{\a}{\b}} \ddot{F}^{pq,rs} \nabla_i h_{pq} \nabla_i h_{rs}}{\Phi-a} \\
 &+ \frac{2\frac{\a}{\b} r^{\frac{\a}{\b}-1}}{\Phi-a} \nabla_i F \nabla_i r + \frac{\frac{\a}{\b} r^{\frac{\a}{\b}-1}F}{\Phi-a} \Delta r
+ \frac{\a}{\b} \f \frac{\a}{\b} - 1 \r r^{\frac{\a}{\b}-2} \frac{F |\nabla r|^2}{\Phi-a}\\
 &- \frac{\gamma}{\Phi-a} H
+ \frac{\a}{\b}\frac{\Phi^2u}{(\Phi-a)^2r^2}H-\frac{\f \a-k \r\gamma}{\b} H\frac{\Phi}{(\Phi-a)^2}\\
&+\frac{2r^{\frac{\a}{\b}} \dot{F}^{ij} \nabla_i H \nabla_j \Phi}{ \f \Phi-a \r^2}
-\frac{2r^{\frac{\a}{\b}} H \dot{F}^{ij} \nabla_i \Phi \nabla_j \Phi}{ \f \Phi-a \r^3}.
\end{aligned}
\end{equation}
Since $\s_k=F^{\b}$, we have
\begin{align*}
\dot{F}^{pq}=\frac{\dot{\s_k}^{pq}}{\b F^{\b-1}}, \quad
\ddot{F}^{pq,rs}\nabla_ih_{pq}\nabla_ih_{rs}=\frac{\ddot{\s_k}^{pq,rs}\nabla_ih_{pq}\nabla_ih_{rs}}{\b F^{\b-1}}-\frac{\b-1}{F}|\nabla F|^2.
\end{align*}
Therefore, using Lemma \ref{liyanyan}, we obtain

\begin{align}\label{LYY}
\ddot{F}^{pq,rs}\nabla_ih_{pq}\nabla_ih_{rs}\leq& -  \frac{F}{\b} \f \frac{\b \nabla_i F}{F}-\frac{\nabla_i H}{H}\r \f \frac{2-k}{k-1}\frac{\b \nabla_i F}{F}-\frac{k}{k-1}\frac{\nabla_i H}{H} \r -\frac{\b-1}{F}|\nabla F|^2\nonumber\\
                               =&\f 1-\frac{\b}{k-1} \r \frac{|\nabla F|^2}{F}+\frac{2}{k-1}\nabla_i F\frac{\nabla_i H}{H}-\frac{kF}{\b \f k-1\r}\frac{|\nabla H|^2}{H^2}.
\end{align}
 At the maximum point p of $Q$ on $M_t$, we have
\begin{align}\label{cr Q 2}
\frac{\nabla H}{H}= \frac{\nabla \Phi}{\Phi-a}=\frac{r^{\frac{\a}{\b}} \nabla F }{ \Phi-a }+\frac{\frac{\a}{\b}r^{\frac{\a}{\b}-1} F\nabla r}{ \Phi-a }.
\end{align}
Then the last two terms in \eqref{LQ gen 1} becomes
\begin{align}\label{LQ 2 cr}
\frac{2r^{\frac{\a}{\b}} \dot{F}^{ij} \nabla_i H \nabla_j \Phi}{ \f \Phi-a \r^2} -\frac{2r^{\frac{\a}{\b}} H \dot{F}^{ij} \nabla_i \Phi \nabla_j \Phi}{ \f \Phi-a \r^3}=0.
\end{align}
Inserting \eqref{cr Q 2} into \eqref{LYY}, the third term in \eqref{LQ gen 1} becomes
\begin{align}\label{LYY final}
&\frac{r^{\frac{\a}{\b}}}{\Phi-a}\ddot{F}^{pq,rs}\nabla_ih_{pq}\nabla_ih_{rs}\nonumber\\
%-------------------------------------------
\leq&\f 1-\frac{\b}{k-1} \r\frac{ r^{\frac{\a}{\b}} }{\Phi-a}\frac{|\nabla F|^2}{F}
+\frac{2}{k-1}\frac{r^{\frac{2\a}{\b}}}{\f \Phi-a\r ^2}|\nabla F|^2
+\frac{2\a}{\f k-1 \r\b}\frac{r^{\frac{2\a}{\b}-1}F }{\f \Phi-a\r ^2}\metric{\nabla F}{\nabla r}\nonumber\\
&-\frac{k}{\b \f k-1\r}\frac{r^{\frac{2\a}{\b}}\Phi}{\f \Phi-a \r ^3}|\nabla F|^2-\frac{2k\a}{\b^2 \f k-1\r}\frac{\Phi^2r^{\frac{\a}{\b}-1} }{\f \Phi-a \r ^3}\metric{\nabla F}{\nabla r} -\frac{k\a^2}{\b^3 \f k-1\r}\frac{\Phi^2r^{\frac{\a}{\b}-2}F}{\f \Phi-a \r ^3}|\nabla r|^2 \nonumber\\
%---------------------------------------
\leq&\f \f 1-\frac{\b}{k-1} \r +\frac{2}{k-1}\frac{\Phi}{\Phi-a}-\frac{k}{\b \f k-1\r}\frac{\Phi^2}{\f\Phi-a\r ^2} \r \frac{r^{\frac{\a}{\b}}|\nabla F|^2}{F\f \Phi-a \r } \nonumber\\
&+\frac{2\a}{\f k-1 \r\b}\frac{r^{\frac{2\a}{\b}-1}F }{\f \Phi-a\r ^2}|\nabla F||\nabla r|
 +\frac{2k\a}{\b^2 \f k-1\r}\frac{\Phi^2r^{\frac{\a}{\b}-1} }{\f \Phi-a \r ^3}|\nabla F||\nabla r| -\frac{k\a^2}{\b^3 \f k-1\r}\frac{\Phi^2r^{\frac{\a}{\b}-2}F}{\f \Phi-a \r ^3}|\nabla r|^2\nonumber\\
%------------------------------------
\leq&\f - \frac{k}{\b(k-1)} \f\frac{a}{\Phi-a} \r^2 + \frac{2(\b-k)}{\b(k-1)} \frac{a}{\Phi-a}- \frac{(1-\b)(k-\b)}{\b(k-1)} \r \frac{r^{\frac{\a}{\b}}|\nabla F|^2}{F\f \Phi-a \r } +C|\nabla F| +C
\end{align}
for some $C>0$ as $\Phi$ and $r$ are bounded along the flow \eqref{s1:flow-n}. From the definitions of $a$ and $c$, $\frac{a}{c-a} \leq \frac{a}{\Phi-a} \leq 1$. Thus, from the assumption $\b \in (0,1] \cup \{k\}$, we have
\begin{align*}
\frac{r^{\frac{\a}{\b}}}{\Phi-a}\ddot{F}^{pq,rs}\nabla_ih_{pq}\nabla_ih_{rs}
\leq -c' |\nabla F|^2 +C|\nabla F| +C
\end{align*}
for some positive constants $c'$ and $C$.
At the same time, we have
\begin{align}\label{-|A|^2<0}
-\f\frac{k}{\b}-1\r\frac{\Phi}{\Phi-a}|A|^2 \leq 0.
\end{align}
From \eqref{rij}, we obtain
\begin{align*}
\Delta r=\frac{n}{r}-\frac{u}{r}H-\frac{|\nabla r|^2}{r}.
\end{align*}
Then
\begin{align*}
&\frac{2\frac{\a}{\b} r^{\frac{\a}{\b}-1}}{\Phi-a} \nabla_i F \nabla_i r + \frac{\frac{\a}{\b} r^{\frac{\a}{\b}-1}F}{\Phi-a} \Delta r
+ \frac{\a}{\b} \f \frac{\a}{\b} - 1 \r r^{\frac{\a}{\b}-2} \frac{F |\nabla r|^2}{\Phi-a} \\
\leq&\frac{2\frac{\a}{\b}r^{\frac{\a}{\b}-1}}{\Phi-a}|\nabla F||\nabla r|+\frac{\frac{n\a}{\b}r^{\frac{\a}{\b}-2}F}{\Phi-a}-\frac{\frac{\a}{\b}r^{\frac{\a}{\b}-2}HuF}{\f \Phi-a\r}+\frac{\a}{\b}\f\frac{\a}{\b}-2\r r^{\frac{\a}{\b}-2}\frac{F|\nabla r|^2}{\Phi-a}\\
\leq& C|\nabla F|+CH+C
\end{align*}
for some constant $C>0$. At $p$, we diagonalize the second fundamental form and denote $f(\kappa)=F(h_i^j)$. By using \eqref{LQ 2 cr}, \eqref{LYY final} and \eqref{-|A|^2<0}, the evolution equation \eqref{LQ gen 1} of $Q$ becomes,
\begin{equation}\label{LQ-F}
\begin{aligned}
\mathcal{L}Q \leq&-\frac{a}{\f\Phi-a\r^2}r^{\frac{\a}{\b}}H \sum_i \dot{f}^i\kappa_{i}{}^2-c'|\nabla F|^2+C|\nabla F|+CH+C\\
\leq& -\frac{a}{\f\Phi-a\r^2}r^{\frac{\a}{\b}}H \sum_i \dot{f}^i\kappa_{i}{}^2+CH+C
\end{aligned}
\end{equation}
for some $C>0$.
Without loss of generality, we assume that $\kappa_1 \geq \kappa_2, \cdots \geq \kappa_n$. Since $\lbrace h_{i}^j \rbrace \in \Gamma^+_k$, by employing the properties of the symmetric functions listed in subsection \ref{symm}, we have
$\dot{f}^i=\frac{\dot{\s}_k{}^i}{\b f^{\b-1}}>0$ and
\begin{align*}
\dot{f}^1\kappa_1=\frac{\dot{\s}_k{}^1\kappa_1}{\b f^{\b -1}}\geq\frac{\frac{k}{n}\s_k}{\b f^{\b -1}}.
\end{align*}
Thus,
\begin{align*}
-\frac{a}{\f\Phi-a\r^2}r^{\frac{\a}{\b}}H \sum_i \dot{f}^i\kappa_{i}{}^2
\leq -\frac{a}{\f\Phi-a\r^2}r^{\frac{\a}{\b}} H(\dot{f}^1 \k_1) \k_1
\leq -c_1 H^2
\end{align*}
for some $c_1>0$, where we used $H \leq n \k_1$. Then from \eqref{LQ-F}, we have
\begin{align*}
\mathcal{L}Q \leq& -c_1H^2+CH+C
\end{align*}
for some $C>0$,
%We use the Young inequality in the last inequality,and the $c_i$ are all positive constant that depends on the estimates we have made in \eqref{C0-est}, \eqref{c1}, \eqref{F} and \eqref{F2}.\\
%So $H$ has an upper bound at $p$,
which implies an upper bound of $Q$. Hence, $H$ is bounded along the flow \eqref{s1:flow-n} and we complete the proof of Lemma \ref{Lemma 4.5}.
\end{proof}
At last, we will prove a $C^2$ estimate of \eqref{s1:flow-n} for the convex case. In this part, our method is motivated by \cite{LSW20b}. To estimate the lower bound of the principal curvatures, we need the following lemma.
\begin{lem}\label{lem-inv-con}
	Let $\{\tilde{h}^{ij} %\frac{\partial }{\partial x^i} \otimes \frac{\partial }{\partial x^j}
	\}$ be the inverse of $\{h_{ij}
	% dx^i \otimes dx^j
	\}$. Then $\lbrace\tilde{h}_i{}^j\rbrace$ represents the inverse of Weingarten map. If $\lbrace h_i{}^j \rbrace> 0$, we have
	\begin{align}
	(\ddot{G}^{pq,lm} + 2 \dot{G}^{pm}\tilde{h}_q{}^l)\eta_p{}^q \eta_l{}^m \geq 2 G^{-1} (\dot{G}^{pq} \eta_p{}^q)^2\label{f-inv-con}
	\end{align}
	for any tensor $\{\eta_p{}^q\}$, where $G=\s_k^{\frac{1}{k}}(h_i^j)$.
\end{lem}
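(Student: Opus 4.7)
The plan is to recognize the stated inequality as the infinitesimal form of the inverse-concavity of $G = \sigma_k^{\frac{1}{k}}$ on the positive cone $\Gamma^+$, a property already recorded in Subsection \ref{symm}. Since $G$ is a smooth, $1$-homogeneous, symmetric function on positive definite matrices, this is precisely the characterization of inverse-concavity developed in \cite{And07}. The proof reduces to an algebraic manipulation using the formulas for the derivatives of the matrix inverse.

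First, I would introduce the auxiliary function $\tilde{G}(A) := G(A^{-1})^{-1}$ on the set of positive definite Weingarten maps $A = \{h_i{}^j\}$. By definition, inverse-concavity of $G$ is the statement that $\tilde{G}$ is concave, i.e. $d^2 \tilde{G}(A)(B,B) \leq 0$ for every symmetric $B$. I would then compute this second derivative by the chain rule, using
\begin{align*}
\frac{d}{dt}\Big|_{t=0}(A+tB)^{-1} &= -A^{-1}BA^{-1}, \\
\frac{d^2}{dt^2}\Big|_{t=0}(A+tB)^{-1} &= 2\,A^{-1}BA^{-1}BA^{-1},
\end{align*}
together with the product rule applied to $\tilde{G} = 1/(G\circ(\,\cdot\,)^{-1})$.

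After bookkeeping, the concavity inequality for $\tilde{G}$ becomes
\begin{equation*}
d^2 G(A^{-1})[C,C] + 2\, dG(A^{-1})[CAC] \geq \frac{2}{G(A^{-1})}\bigl(dG(A^{-1})[C]\bigr)^2,
\end{equation*}
where $C := A^{-1}BA^{-1}$ and I used $A^{-1}BA^{-1}BA^{-1} = CAC$ (obtained by substituting $B = ACA$). Renaming $A^{-1}\to A$, so that $A\to A^{-1} = \tilde{h}$, and relabeling $C$ as the free symmetric tensor $\eta$, the inequality takes the form
\begin{equation*}
\ddot{G}^{pq,lm}\eta_p{}^q\eta_l{}^m + 2\,\dot{G}^{pm}\,(\eta\,\tilde{h}\,\eta)_p{}^m \geq \frac{2}{G}\bigl(\dot{G}^{pq}\eta_p{}^q\bigr)^2.
\end{equation*}
Expanding $(\eta\,\tilde{h}\,\eta)_p{}^m = \eta_p{}^q\tilde{h}_q{}^l\eta_l{}^m$ yields exactly \eqref{f-inv-con}.

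The main obstacle is keeping the index placements consistent throughout the computation, since the paper uses the mixed tensor $h_i{}^j$ rather than purely covariant $h_{ij}$, and one must be careful to identify the derivatives $\dot{G}^{pq}$, $\ddot{G}^{pq,lm}$ in the right convention before applying the above substitution. No deeper analytical difficulty arises, since the input inverse-concavity of $\sigma_k^{\frac{1}{k}}$ on $\Gamma^+$ is classical.
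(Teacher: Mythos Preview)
Your proposal is correct and follows essentially the same route as the paper: both define the auxiliary function $\tilde{G}(A)=G(A^{-1})^{-1}$ (the paper calls it $S$), differentiate twice via the chain rule using the standard formulas for the derivatives of the matrix inverse, and then perform the substitution $\eta = A\,\bar\eta\,A$ (equivalently $C=A^{-1}BA^{-1}$) to arrive at \eqref{f-inv-con}. The only difference is cosmetic---the paper works componentwise while you work with matrix notation---so your caution about index conventions is the right place to be careful, but no new idea is needed.
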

\begin{proof}
	In the proof of this lemma, we denote $\dot{G}^{pq} := \frac{\partial G}{\partial h_p{}^q}$ and $\ddot{G}^{pq,rs} := \frac{\partial^2 G}{\partial h_p{}^q \partial h_r{}^s}$. Assume $S(\tilde{h}_i{}^j) = \frac{1}{G(h_i{}^j)}$, then
	\begin{align*}
	\dot{S}^{ij} := \frac{\partial S}{\partial \tilde{h}_i{}^j} = - G^{-2} \dot{G}^{pq} \frac{\partial h_p{}^q}{ \partial \tilde{h}_i{}^j} =
	G^{-2}\dot{G}^{pq}h_p{}^i h_j{}^q.
	\end{align*}
	Since $G$ is inverse-concave, i.e., $S$ is concave, given any $\bar{\eta}_i{}^j$, we have
	\begin{align}
	0 \geq& S^{ij, \lambda \mu} \bar{\eta}_i{}^j \bar{\eta}_\lambda{}^\mu \nonumber\\
	=& \frac{\partial }{\partial h_l{}^m}(G^{-2}\dot{G}^{pq}h_p{}^i h_j{}^q) \frac{\partial h_l{}^m}{\partial \tilde{h}_\lambda{}^\mu} \bar{\eta}_i{}^j \bar{\eta}_\lambda{}^\mu \nonumber\\
%---------------
=& -h_l{}^\lambda h_\mu{}^m(-2 G^{-3}\dot{G}^{lm} \dot{G}^{pq}h_p{}^i h_j{}^q + G^{-2}\ddot{G}^{pq,lm} h_p{}^i h_j{}^q + G^{-2}\dot{G}^{pq} \d_p{}^l \d_m{}^i h_j{}^q \nonumber\\
&+ G^{-2}\dot{G}^{pq}h_p{}^i \d_j{}^l \d_m{}^q)\bar{\eta}_i{}^j \bar{\eta}_\lambda{}^\mu \nonumber\\	
%----------
=& 2G^{-3}\dot{G}^{lm}\dot{G}^{pq}h_p{}^i h_j{}^q h_l{}^\lambda h_\mu{}^m\bar{\eta}_i{}^j \bar{\eta}_\lambda{}^\mu - G^{-2}\ddot{G}^{pq,lm} h_p{}^i h_j{}^q h_l{}^\lambda h_\mu{}^m\bar{\eta}_i{}^j \bar{\eta}_\lambda{}^\mu\nonumber\\
&-2G^{-2} \dot{G}^{pq} h_p{}^i h_j{}^m \tilde{h}_{m}{}^l h_l{}^\lambda h_\mu{}^q \bar{\eta}_i{}^j \bar{\eta}_{\lambda}{}^\mu \nonumber,
	\end{align}
%=& 2G^{-3}\dot{G}^{lm}\dot{G}^{pq}h_p{}^i h_j{}^q h_l{}^\lambda h_\mu{}^m\bar{\eta}_i{}^j \bar{\eta}_\lambda{}^\mu - G^{-2}G^{pq, lm} h_p{}^i h_j{}^q h_l{}^\lambda h_\mu{}^m\bar{\eta}_i{}^j \bar{\eta}_\lambda{}^\mu\\
%&-G^{-2} \dot{G}^{pq} h_p{}^{\lambda} h_\mu{}^i h_j{}^q \bar{\eta}_i{}^j \bar{\eta}_{\lambda}{}^\mu-G^{-2} \dot{G}^{pq} h_p{}^i  h_j{}^\lambda h_\mu{}^q \bar{\eta}_i{}^j \bar{\eta}_{\lambda}{}^\mu \nonumber\\
where we used $\d_m{}^\lambda = \tilde{h}_m{}^l h_l{}^\lambda$ in the last equality of the above equation. Now, by setting $\eta_p{}^q = h_p{}^i \bar{\eta}_i{}^j h_j{}^q$, we have
	\begin{align*}
	0 \geq 2 G^{-3} \dot{G}^{pq}\dot{G}^{lm} \eta_p{}^q \eta_l{}^m - G^{-2} \ddot{G}^{pq,lm}\eta_p{}^q \eta_l{}^m - 2G^{-2}\dot{G}^{pq} \tilde{h}_{m}{}^l \eta_p{}^m \eta_l{}^q,
	\end{align*}
	which implies the statement.
	\end{proof}
\begin{lem}\label{conv-c2}
	Let $X(\cdot, t)$ be a smooth, closed and uniformly convex solution to the normalized flow \eqref{s1:flow-n} for $t \in [0,T)$, which enclosed the origin. If $\a \geq \b+k$, $\b > 0$, there exists a positive constant $C$ depending only on $\a$ and $M_0$, %$\min_{\mathbb{S}^n \times [0,T)} r$ and $\max_{\mathbb{S}^n \times [0,T)} r$
	such that the principal curvatures of $X (\cdot, t)$ satisfy
	\begin{align*}
	\frac{1}{C} \leq \k_i(\cdot, t) \leq C , \quad \forall \ t \ \in [0,T) \ {\rm and} \ i=1,2,\cdots,n.
	\end{align*}
\end{lem}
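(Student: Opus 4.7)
The plan is to bound $\k_{\max}$ above and $\k_{\min}$ below separately, with Lemma \ref{lem-inv-con} playing the decisive role in the latter. In the uniformly convex setting, $\k_i > 0$ so an upper bound on the mean curvature $H$ already forces an upper bound on every $\k_i$.

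For the upper bound I would recycle the auxiliary function $Q_1 = H/(\Phi-a)$ from Lemma \ref{Lemma 4.5} and recompute $\mathcal{L}Q_1$. The only step of that proof sensitive to the range of $\b$ is the handling of $\ddot F^{pq,rs}\nabla_i h_{pq}\nabla_i h_{rs}$ via Lemma \ref{liyanyan}, which required $\b\in(0,1]\cup\{k\}$. For general $\b>0$ in the uniformly convex case I would instead write $F = G^{k/\b}$ with $G = \s_k^{1/k}$ concave on $\Gamma_k^+$, so that
\[\ddot F^{pq,rs} = \tfrac{k}{\b}G^{k/\b-1}\ddot G^{pq,rs} + \tfrac{k}{\b}\bigl(\tfrac{k}{\b}-1\bigr)G^{k/\b-2}\dot G^{pq}\dot G^{rs}.\]
The first piece contributes non-positively by concavity of $G$. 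The second piece is a multiple of $|\dot G^{pq}\nabla_i h_{pq}|^2 = |\nabla_i G|^2$, which at the maximum of $Q_1$ becomes proportional to $|\nabla\Phi|^2/(\Phi-a)^2$ via the critical equation \eqref{cr Q 2} and can therefore be absorbed by the other gradient terms already present in $\mathcal{L}Q_1$. This yields a uniform bound $H \leq C$, hence $\k_i \leq C$.

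For the lower bound I would follow the approach of \cite{LSW20b}. Consider
\[Q_2 = \log\lambda_{\max}(\tilde h_i{}^j) + A\log u + B r^2,\]
with positive constants $A,B$ to be chosen, where $\tilde h_i{}^j$ is the inverse Weingarten map. Differentiating $\tilde h_i{}^p h_p{}^j = \d_i{}^j$ gives $\partial_t\tilde h_i{}^j = -\tilde h_i{}^p(\partial_t h_p{}^q)\tilde h_q{}^j$ and a similar identity for $\nabla$; substituting \eqref{evolution h} produces the evolution of $\tilde h_i{}^j$. At a space-time maximum of $Q_2$ where $\tilde h$ is diagonal and $\lambda_{\max} = \tilde h_1{}^1 = 1/\k_n$, two bad quadratic-in-$\nabla h$ contributions appear: the $\ddot F$ term from $\mathcal{L}h_i{}^j$ and a Codazzi-type term of the form $2\dot F^{pm}\tilde h_q{}^l\nabla h \otimes \nabla h$ coming from matrix inversion. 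Passing again to $G = \s_k^{1/k}$ and applying Lemma \ref{lem-inv-con} is tailored precisely to this combination, delivering the inequality
\[(\ddot G^{pq,lm} + 2\dot G^{pm}\tilde h_q{}^l)\eta_p{}^q\eta_l{}^m \geq 2G^{-1}(\dot G^{pq}\eta_p{}^q)^2,\]
which controls the offending quadratic. Choosing $A$ and $B$ large enough so that the contributions of $\mathcal{L}\log u$ and $\mathcal{L}r^2$ dominate the remaining positive terms, together with the upper bound on $\k_{\max}$ already obtained, produces an upper bound on $\lambda_{\max}(\tilde h)$, equivalently $\k_i \geq 1/C$.

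The main obstacle is precisely that $F = \s_k^{1/\b}$ is neither concave nor inverse-concave for general $\b>0$. Routing through $G = \s_k^{1/k}$ trades this defect for the appearance of chain-rule cross-terms $\dot G\otimes\dot G$; these are absorbed at critical points in the upper bound argument and are controlled by Lemma \ref{lem-inv-con} in the lower bound argument, which is the technical heart of the proof.
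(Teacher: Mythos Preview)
Your upper bound argument has a genuine gap. Writing $\ddot F = \tfrac{k}{\b}G^{k/\b-1}\ddot G + \tfrac{k}{\b}(\tfrac{k}{\b}-1)G^{k/\b-2}\dot G\otimes\dot G$ and using only the concavity of $G$ kills the first piece but leaves the second, which for $\b<k$ contributes a \emph{positive} multiple of $|\nabla F|^2$ to $\mathcal L Q_1$. At the spatial maximum of $Q_1=H/(\Phi-a)$ the two gradient terms in the last line of \eqref{LQ gen 1} cancel exactly (this is \eqref{LQ 2 cr}), so there is no negative $|\nabla\Phi|^2$ or $|\nabla F|^2$ term left to absorb it; the remaining first-order terms are only linear in $\nabla F$. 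This is precisely why Lemma~\ref{Lemma 4.5} needs the sharper concavity of $(\s_k/H)^{1/(k-1)}$ from Lemma~\ref{liyanyan} and even then only succeeds for $\b\in(0,1]\cup\{k\}$. Your substitute, bare concavity of $\s_k^{1/k}$, is strictly weaker and cannot close the estimate for general $\b>0$.

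The paper avoids this difficulty by reversing your order: it proves the \emph{lower} bound on $\k_i$ first, and does so with the bare test function $\tilde h_1{}^1$ (no $\log u$, no $r^2$). After applying Lemma~\ref{lem-inv-con} and a Cauchy--Schwarz absorption of the cross term $\nabla_1 G\,\nabla_1 r$, the evolution reduces to $\mathcal L\tilde h_1{}^1 \le (\tfrac{k}{\b}-1)\Phi + \gamma\tilde h_1{}^1 - \tfrac{\a}{\b}r^{-1}\Phi(\tilde h_1{}^1)^2\nabla_1\nabla_1 r$. The key point you are missing is that the explicit identity $\nabla_1\nabla_1 r = r^{-1}(1-(\nabla_1 r)^2) - (u/r)h_{11} \ge u^2 r^{-3} - (u/r)h_{11}$ already supplies a $-c(\tilde h_1{}^1)^2$ term, so no prior upper curvature bound and no auxiliary barrier are needed. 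Once $\k_{\min}\ge c_4>0$, the upper bound is a one-line algebraic consequence of Lemma~\ref{F2}: $C\ge\s_k\ge C_{n-1}^{k-1}\k_{\min}^{k-1}\k_{\max}$. Your circular dependency (upper bound needed for the lower bound) is thus unnecessary, and your proposed route to the upper bound does not close.
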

\begin{proof}
	Let  $\lambda(x,t)$ denote the maximal principal radii at $X(x,t)$.
 Fix an arbitrary $t_0 \in [0,T)$, we choose a point $x_0 \in M_{t_0}$ such that $\lambda(x_0,t_0) = \max_{M_{t_0}} \lambda(\cdot, t_0)$. Then we can choose a normal coordinate system near $(x_0,t_0) $ on $M_{t_0}$ diagonalizing the second fundamental form at $(x_0,t_0) $. Further, we can assume $\partial_1 |_{(x_0,t_0)}$ is an eigenvector with respect to $\lambda(x_0,t_0)$, i.e., $\lambda(x_0,t_0) = \tilde{h}_1{}^1 (x_0, t_0)$. Using this coordinate system, we can calculate the evolution of $\tilde{h}_1{}^1$ at $(x_0,t_0)$. From
	\begin{equation}\label{par t tilde h}
	\frac{\partial}{\partial t}\tilde{h}_1{}^1=-( \tilde{h}_1{}^1 ) ^2\partial_t h_1{}^1,
	\end{equation}
	\begin{align*}
	\nabla_i \tilde{h}_1{}^1 =& \frac{\partial \tilde{h}_1{}^1}{\partial h_p{}^q} \nabla_i h_p{}^q
	= - \tilde{h}_1{}^p \tilde{h}_q{}^1 \nabla_i h_p{}^q
	= - (\tilde{h}_1{}^1)^2 \nabla_i h_{11},
	\end{align*}
and
\begin{equation}\label{2 deriv tilde h}
	\begin{aligned}
	\nabla_j \nabla_i \tilde{h}_1{}^1 =& \nabla_j (- \tilde{h}_1{}^p \tilde{h}_q{}^1 \nabla_i h_p{}^q) \\
	=& -\nabla_j \tilde{h}_1{}^p \tilde{h}_q{}^1 \nabla_i h_p{}^q - \tilde{h}_1{}^p  \nabla_j \tilde{h}_q{}^1 \nabla_i h_p{}^q - \tilde{h}_1{}^p \tilde{h}_q{}^1 \nabla_j \nabla_i h_p{}^q\\
	 =& \tilde{h}_1{}^r\tilde{h}_l{}^p \tilde{h}_q{}^1 \nabla_j h_r{}^l  \nabla_i h_p{}^q + \tilde{h}_1{}^p \tilde{h}_r{}^1 \tilde{h}_q{}^s \nabla_j h_s{}^r \nabla_i h_p{}^q - \tilde{h}_1{}^p \tilde{h}_q{}^1 \nabla_j \nabla_i h_p{}^q \\
	=& - (\tilde{h}_1{}^1)^2 \nabla_j \nabla_i h_1{}^1 + 2 (\tilde{h}_1{}^1)^2 \tilde{h}^{pl} \nabla_i h_{1p} \nabla_j h_{1l},
	\end{aligned}
\end{equation}
we have by using \eqref{par t tilde h} and \eqref{2 deriv tilde h}
\begin{equation}\label{L tilde h 11}
	\begin{aligned}
	\mathcal{L}\tilde{h}_1{}^1=&\partial_t\tilde{h}_1{}^1-r^{\frac{\a}{\b}}\dot{F}^{ij}\nabla_j\nabla_i\tilde{h}_1{}^1\\
	=& -( \tilde{h}_1{}^1 )^2\partial_t h_1{}^1+ (\tilde{h}_1{}^1)^2r^{\frac{\a}{\b}}\dot{F}^{ij} \nabla_j \nabla_i h_1{}^1 -2 (\tilde{h}_1{}^1)^2 r^{\frac{\a}{\b}}\dot{F}^{ij} \tilde{h}^{pq} \nabla_i h_{1p} \nabla_j h_{1q}\\
	=&-( \tilde{h}_1{}^1 )^2 \mathcal{L} h_1{}^1-2 (\tilde{h}_1{}^1)^2 r^{\frac{\a}{\b}}\dot{F}^{ij} \tilde{h}^{pq} \nabla_i h_{1p} \nabla_j h_{1q}.
	\end{aligned}
\end{equation}
Plugging \eqref{evolution h} into \eqref{L tilde h 11}, we obtain
	\begin{align*}
	\mathcal{L}\tilde{h}_1{}^1=&\f\frac{k}{\b}-1\r\Phi-r^{\frac{\a}{\b}}\dot{F}^{pq}h_{lp}h_{lq}\tilde{h}_1{}^1-r^{\frac{\a}{\b}}(\tilde{h}_1{}^1)^2\ddot{F}^{pq,rs}\nabla_1h_{pq}\nabla_1h_{rs}\\
&-2\frac{\a}{\b}r^{\frac{\a}{\b}-1}(\tilde{h}_1{}^1)^2\nabla_1F\nabla_1r-\frac{\a}{\b}r^{\frac{\a}{\b}-1}F(\tilde{h}_1{}^1)^2\nabla_1\nabla_1r-\frac{\a}{\b}\f\frac{\a}{\b}-1\r r^{\frac{\a}{\b}-2}F(\tilde{h}_1{}^1)^2\nabla_1r\nabla_1r+\gamma\tilde{h}_1{}^1\\
&-2 (\tilde{h}_1{}^1)^2 r^{\frac{\a}{\b}}\dot{F}^{ij} \tilde{h}^{pq} \nabla_i h_{1p} \nabla_j h_{1q}.
	\end{align*}
Then we denote $G=\s_k^{\frac{1}{k}}$, thus $F=G^{\frac{k}{\b}}$. We have
$$
\dot{F}^{pq}=\frac{k}{\b}G^{\frac{k}{\b}-1}\dot{G}^{pq},
$$
$$
\ddot{F}^{pq,rs}=\frac{k}{\b}G^{\frac{k}{\b}-1}\ddot{G}^{pq,rs}+\frac{k}{\b}\f \frac{k}{\b}-1 \r G^{\frac{k}{\b}-2}\dot{G}^{pq}\dot{G}^{rs}.
$$
Direct computation gives
\begin{equation}\label{tildeh}	
	\begin{aligned}
	\mathcal{L}\tilde{h}_1{}^1=&\f\frac{k}{\b}-1\r\Phi-\sum_i \frac{k}{\b}\Phi G^{-1}\dot{G}^{ii}h_{ii}^2\tilde{h}_1{}^1-\frac{k}{\b}\Phi G^{-1}(\tilde{h}_1{}^1)^2\ddot{G}^{pq,rs}\nabla_1h_{pq}\nabla_1h_{rs}\\
	&-\frac{k}{\b}\f \frac{k}{\b}-1 \r \Phi G^{-2}(\tilde{h}_1{}^1)^2 \f \nabla_1 G \r^2
-2\frac{\a k}{\b^2}\Phi r^{-1}G^{-1}(\tilde{h}_1{}^1)^2\nabla_1 G\nabla_1r\\
&-\frac{\a}{\b}r^{-1}\Phi(\tilde{h}_1{}^1)^2\nabla_1\nabla_1r-\frac{\a}{\b}\f\frac{\a}{\b}-1\r r^{-2}\Phi(\tilde{h}_1{}^1)^2\nabla_1r\nabla_1r+\gamma\tilde{h}_1{}^1\\
&-2 (\tilde{h}_1{}^1)^2 \frac{k}{\b}\Phi G^{-1}\dot{G}^{ij} \tilde{h}^{pq} \nabla_i h_{1p} \nabla_j h_{1q}.
	\end{aligned}
\end{equation}	
Letting $\eta_i{}^j =\nabla_1 h_i{}^j$ in Lemma \ref{lem-inv-con}, by the Codazzi equation, we have
\begin{equation}\label{use inv-con}
\begin{aligned}
&\frac{k}{\b}\Phi G^{-1}(\tilde{h}_1{}^1)^2\ddot{G}^{pq,rs}\nabla_1h_{pq}\nabla_1h_{rs}
+2 (\tilde{h}_1{}^1)^2 \frac{k}{\b}\Phi G^{-1}\dot{G}^{ij} \tilde{h}^{pq} \nabla_i h_{1p} \nabla_j h_{1q}\\
\geq&
2\frac{k}{\b} \Phi G^{-2}(\tilde{h}_1{}^1)^2 \f \nabla_1 G \r^2
\end{aligned}
\end{equation}	
Putting \eqref{use inv-con} into \eqref{tildeh}, we obtain
\begin{equation}\label{tildeh2}
	\begin{aligned}
	\mathcal{L}\tilde{h}_1{}^1\leq &\f\frac{k}{\b}-1\r\Phi-\sum_i \frac{k}{\b}\Phi G^{-1}\dot{G}^{ii}h_{ii}^2\tilde{h}_1^1+\gamma\tilde{h}_1^1-\frac{k}{\b}\f \frac{k}{\b}+1 \r \Phi G^{-2}(\tilde{h}_1{}^1)^2 \f \nabla_1 G \r^2\\
	&-2\frac{\a k}{\b^2}\Phi r^{-1}G^{-1}(\tilde{h}_1{}^1)^2\nabla_1 G\nabla_1r
-\frac{\a}{\b}r^{-1}\Phi(\tilde{h}_1{}^1)^2\nabla_1\nabla_1r-\frac{\a}{\b}\f\frac{\a}{\b}-1\r r^{-2}\Phi(\tilde{h}_1{}^1)^2\f \nabla_1r\r ^2.	
	\end{aligned}
\end{equation}
By use of
	\begin{align*}
	-2\frac{\a k}{\b^2}\Phi r^{-1}G^{-1}(\tilde{h}_1{}^1)^2\nabla_1 G\nabla_1r \leq \frac{k}{\b}\f \frac{k}{\b}+1 \r \Phi G^{-2}(\tilde{h}_1{}^1)^2 \f \nabla_1 G \r^2+\frac{\a^2 k}{\b^2\f \b+k\r} r^{-2}\Phi(\tilde{h}_1{}^1)^2\f \nabla_1r\r ^2,
	\end{align*}
we have from \eqref{tildeh2}
\begin{equation}\label{tildeh3}
	\begin{aligned}
	\mathcal{L}\tilde{h}_1{}^1\leq& \f\frac{k}{\b}-1\r\Phi+\gamma\tilde{h}_1{}^1-\frac{\a}{\b}r^{-1}\Phi(\tilde{h}_1{}^1)^2\nabla_1\nabla_1 r-\frac{\a\f \a-\b-k\r }{\b \f \b+k \r } r^{-2}\Phi(\tilde{h}_1{}^1)^2\f \nabla_1r\r ^2\\
	\leq& \f\frac{k}{\b}-1\r\Phi+\gamma\tilde{h}_1{}^1-\frac{\a}{\b}r^{-1}\Phi(\tilde{h}_1{}^1)^2\nabla_1\nabla_1 r,
	\end{aligned}
	\end{equation}
	where we used  $\dot{G}^{ii} >0$.
From \eqref{ri} and \eqref{rij}, we have
\begin{align}\label{hess11 r}
1- (\nabla_1 r)^2 \geq \frac{u^2}{r^2},
\quad
\nabla_1 \nabla_1 r=\frac{1}{r}-\frac{u}{r}h_{11}-\frac{\f \nabla_1 r \r ^2}{r}
\geq \frac{u^2}{r^3} - \frac{u}{r} h_{11}.
\end{align}
Inserting \eqref{hess11 r} into \eqref{tildeh3}, we have
\begin{align*}
	\mathcal{L}\tilde{h}_1{}^1\leq &\f\frac{k}{\b}-1\r\Phi+\f \gamma+\frac{\a}{\b}ur^{-2}\Phi \r \tilde{h}_1{}^1-\frac{\a}{\b} u^2 r^{-4}\Phi(\tilde{h}_1{}^1)^2.
\end{align*}
From Lemma \ref{C0-est}, Lemma \ref{c1} and Corollary \ref{cor Phi}, we derive that %where we derive the maximum  principal radii on $M_{t_0}$,we have
	\begin{align*}
	\frac{\partial }{\partial t}\tilde{h}_1{}^1\leq -c_1(\tilde{h}_1{}^1)^2+c_2\tilde{h}_1{}^1+c_3.
\end{align*}
for some constants $c_1$, $c_2$, $c_3$ at $(x_0,t_0)$.
Hence $\lambda(x,t)$ has a uniform upper bound, which means that the principal curvatures are bounded from below by a positive constant $c_4$. Meanwhile, by Lemma \ref{F2}, one can get an upper bound of $\s_k$.
From the convexity assumption, we have
\begin{align*}
C\geq \s_k=&\kappa_{\max}\s_{k-1}(\kappa|\kappa_{\max})+\s_k(\kappa|\kappa_{\max})\\
\geq& C_{n-1}^{k-1}\kappa_{\min}^{k-1}\kappa_{\max}\\
\geq& C_{n-1}^{k-1} c_4^{k-1}\kappa_{\max}.
\end{align*}
for some constant $C$.
%Thus there exists a positive constant $C$ such that $\kappa_{\max}\leq C$. It is evident to see that the theorem holds.
 Hence, the principal curvatures are bounded from above. This completes the proof of Lemma \ref{conv-c2}.
\end{proof}
  \begin{rem}
 By a similar argument, we can also get the bound of principal curvatures by exchanging  $\s_k^{\frac{1}{\b}}(\k)$ to a general curvature function $f^\frac{k}{\b}(\k)$ in Lemma \ref{conv-c2}, where  $f(\k)$ is a smooth symmetric positive function defined on the positive cone $\Gamma^+$ and satisfies the following conditions:
\begin{enumerate}
	\item strictly increasing: $\dot{f}^i = \frac{\partial f}{\partial \k_i}>0$ for each $i= 1, 2, \cdots,n$,
	\item homogeneous of degree one: $f(\l \k) = \l f(\k)$ for any $\l >0$,
	\item  concave,
	\item inverse-concave,
	\item $f_{-1}$ vanishes on the boundary of $\Gamma^+$.
\end{enumerate}
As a consequence, we can prove a similar result to Theorem \ref{main-thm-4}. Because the argument is similar, we omit the explicit proof of the conclusion.%However, for the consistency of the calculations in this paper, we omit the explicit proof for general $f$.
\end{rem}
Now we have obtained the priori estimates of flow \eqref{s1:flow-n}, with the initial hypersurfaces mentioned in Theorem \ref{main-thm-3}, Theorem \ref{main-thm-2}, Theorem \ref{main-thm-1} and Theorem \ref{main-thm-4}. From Lemma \ref{C0-est}, Lemma \ref{c1} and Lemma \ref{F}, these flows have  short time existence. Using the $C^2$ estimates given in Lemma \ref{Lem 4.3}, Lemma \ref{Lemma 4.5} and Lemma \ref{lem-inv-con}, due to Krylov \cite{Kryl87}, we can get the $C^{2,\lambda}$ estimate of the scalar equation \eqref{s1:flow-rn}. %Bootstrap argument using Schauder estimates \cite{Lg} gives higher order estimates.%
Hence, we get the long time existence of these flows.
\begin{lem}\label{longexsits}
	The smooth solution of \eqref{s1:flow-n} with the initial hypersuface mentioned in Theorem \ref{main-thm-3}, Theorem \ref{main-thm-2}, Theorem \ref{main-thm-1} or Theorem \ref{main-thm-4} exists for all time $t \in [0, +\infty)$.
\end{lem}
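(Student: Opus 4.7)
The plan is to argue by contradiction: suppose the maximal existence time of the smooth solution is some finite $T > 0$, and use the a priori estimates from the previous lemmas to extend the solution past $T$. Parametrizing $M_t$ as a radial graph reduces the normalized flow \eqref{s1:flow-n} to the scalar parabolic equation \eqref{s1:flow-rn} for the radial function $r(\theta, t)$ on $\mathbb{S}^n \times [0, T)$. Short-time existence on some interval $[0, T_0)$ follows from standard theory for fully nonlinear parabolic equations, since $F = \sigma_k^{1/\beta}$ is strictly elliptic on $\Gamma_k^+$ and each of the initial hypersurfaces in Theorems \ref{main-thm-3}, \ref{main-thm-2}, \ref{main-thm-1}, \ref{main-thm-4} lies in the relevant admissible class. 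So it suffices to control the solution uniformly up to $T$.

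The estimates collected in Sections \ref{sec:3} and \ref{sec:4} then supply, uniformly for $t \in [0, T)$: (i) the $C^0$ bounds $\frac{1}{C} \le r \le C$ and $\frac{1}{C} \le u \le C$ from Lemma \ref{C0-est} and Corollary \ref{cor u}; (ii) the gradient bound $|Dr| \le C$ from Lemma \ref{c1}, which in particular shows that $M_t$ remains uniformly star-shaped; (iii) two-sided bounds on $F$ from Lemmas \ref{F} and \ref{F2}; and (iv) two-sided bounds on the principal curvatures from Lemma \ref{Lem 4.3}, Lemma \ref{Lemma 4.5}, or Lemma \ref{conv-c2}, according to whether one works in the setting of Theorem \ref{main-thm-1}, Theorems \ref{main-thm-3}/\ref{main-thm-2}, or Theorem \ref{main-thm-4}. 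Together these force the fully nonlinear operator in \eqref{s1:flow-rn} to be uniformly parabolic with $C^2$ coefficients on $\mathbb{S}^n \times [0,T)$, and to remain bounded away from the boundary of $\Gamma_k^+$.

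The next step is to upgrade this to a $C^{2, \lambda}$ estimate. The key structural input is concavity: although $\sigma_k^{1/\beta}$ is not itself concave for arbitrary $\beta$, the standard trick of rewriting the equation in terms of $G = \sigma_k^{1/k}$, which is concave on $\Gamma_k^+$, produces an equivalent fully nonlinear parabolic equation of concave type. Krylov's regularity theorem \cite{Kryl87} then provides a uniform $C^{2, \lambda}$ estimate for $r$ on $\mathbb{S}^n \times [0, T)$. Standard linear parabolic Schauder theory, applied iteratively to the equations satisfied by successive spatial derivatives of $r$, bootstraps this into uniform $C^{k, \lambda}$ estimates for every $k$.

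With uniform smooth estimates on $[0, T)$, the solution converges in $C^\infty$ as $t \to T^-$ to a smooth limit hypersurface $M_T$ that is still strictly positive, star-shaped, and lies in the same convexity class. Using $M_T$ as a new initial datum, the short-time existence result extends the flow past $T$, contradicting the maximality of $T$. Hence $T = +\infty$. I do not expect a genuine obstacle here, since all the hard analytic work (in particular the $C^2$ bounds produced by the new auxiliary functions \eqref{aux-fun-1} and \eqref{aux-fun-2}) is already in place; the only careful point is checking that the rewriting used to invoke Krylov's theorem remains valid throughout the admissible range of $\beta$ considered in the four main theorems, which follows because the curvature bounds keep the Hessian of $r$ in a compact subset of $\Gamma_k^+$.
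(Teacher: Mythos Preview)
Your proposal is correct and follows essentially the same route as the paper: the paper's argument (given in the paragraph immediately preceding the lemma) is simply to assemble the $C^0$, $C^1$, $F$, and curvature estimates from Lemmas \ref{C0-est}, \ref{c1}, \ref{F}, \ref{F2}, \ref{Lem 4.3}, \ref{Lemma 4.5}, \ref{conv-c2}, invoke Krylov \cite{Kryl87} for the $C^{2,\lambda}$ estimate of \eqref{s1:flow-rn}, and conclude long-time existence. Your write-up supplies the standard details (the contradiction framework, the concavity rewriting via $G=\s_k^{1/k}$, and the Schauder bootstrap) that the paper leaves implicit.
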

\section{Asymptotic convergence and proofs of Theorem \ref{main-thm-3}--\ref{main-thm-4}}\label{sec:5}
In this section, we improve the $C^1$ estimates in Lemma \ref{c1}.
\begin{lem}\label{exp-decay}
	Let $r(\cdot, t)$ be the solution to \eqref{s1:flow-n} on $\mathbb{S}^n \times [0,T)$. We admit the priori estimates given in Section \ref{sec:3}, \ref{sec:4} for various situations and assume that the corresponding hypersurfaces $M_t$ are $k$-convex along the normalized flow. If  $\a \geq \b+k$, $\b>0$ and $n \geq 2$, there exist positive constants $C$ and $a$ only depend on $M_0$, $\a$, $\b$, $k$, such that
	\begin{equation*}
	\frac{|Dr(\cdot,t)|}{r(\cdot,t)}\leq Ce^{-at}, \quad \forall \ t>0.
	\end{equation*}
\end{lem}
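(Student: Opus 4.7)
The plan is to upgrade the $C^1$ bound of Lemma \ref{c1} into exponential decay of $|D\g|^2$, where $\g:=\log r$. Since $|D\g|=|Dr|/r$, such a decay is exactly the conclusion. The starting point is inequality \eqref{C1-final-ineq} derived inside the proof of Lemma \ref{c1}: at any spatial maximum $(\t_0,t)$ of $|D\g|^2(\cdot,t)$,
\begin{align*}
\partial_t \tfrac{1}{2}|D\g|^2 \leq{}& -\tfrac{\a-\b-k}{\b}e^{(\frac{\a}{\b}-1)\g}\rho\s_k^{\frac{1}{\b}}|D\g|^2-\tfrac{1}{\b}e^{\frac{\a}{\b}\g}\s_k^{\frac{1}{\b}-1}\s_k^{pq}\g_{ip}\g_{iq}\\
& -\tfrac{1}{\b}e^{\frac{\a}{\b}\g}\s_k^{\frac{1}{\b}-1}\s_k^{pq}(\d_{pq}|D\g|^2-\g_p\g_q).
\end{align*}
All three terms on the right are non-positive on $\Gamma_k^+$ because $\s_k^{pq}$ is positive semidefinite there. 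I aim to extract from one of them a contribution of order $|D\g|^2$ with a uniform negative coefficient and then apply ODE comparison to $\Psi(t) := \max_{\mathbb{S}^n}\tfrac{1}{2}|D\g|^2(\cdot,t)$.

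When $\a>\b+k$, the first term already does the job. The $C^0$ estimate (Lemma \ref{C0-est}) bounds $\g$ above and below, the lower bound $F=\s_k^{1/\b}\geq c>0$ comes from Lemma \ref{F}, and $\rho\geq 1$, so $\tfrac{\a-\b-k}{\b}e^{(\frac{\a}{\b}-1)\g}\rho\s_k^{\frac{1}{\b}}\geq 2a$ for a uniform constant $a>0$. Dropping the remaining non-positive terms yields $\tfrac{d}{dt}\Psi(t)\leq -2a\Psi(t)$, and hence $\Psi(t)\leq\Psi(0)e^{-2at}$.

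The boundary case $\a=\b+k$ is the main obstacle, because the first term vanishes and decay must come from the third. I rotate the frame on $\mathbb{S}^n$ at $(\t_0,t)$ so that $D\g=|D\g|e_1$; the first-order critical condition $\g_i\g_{ij}=0$ then forces $\g_{1j}=0$ for every $j$, and \eqref{rg-h} collapses to $h_i^{\,j}=\tfrac{1}{r\rho}(\d_{ij}-\g_{ij})$, which is block diagonal with $h_1^{\,1}=\tfrac{1}{r\rho}$. A further rotation inside the subspace orthogonal to $e_1$ simultaneously diagonalizes $h_i^{\,j}$ without disturbing $D\g$, so in the new frame $\s_k^{pq}=\s_{k-1}(\k|p)\d_{pq}$. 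Combining this diagonal form with the standard identities $\sum_p\s_{k-1}(\k|p)=(n-k+1)\s_{k-1}(\k)$ and $\s_{k-1}(\k)=\s_{k-1}(\k|1)+\k_1\s_{k-2}(\k|1)$ gives
\begin{align*}
\s_k^{pq}(\d_{pq}|D\g|^2-\g_p\g_q)=|D\g|^2\bigl[(n-k)\s_{k-1}(\k)+\k_1\s_{k-2}(\k|1)\bigr].
\end{align*}
The quantitative task left is to bound this bracket uniformly from below by a positive constant. When $k<n$, Newton-MacLaurin together with Lemma \ref{F} yields $\s_{k-1}(\k)\geq c>0$, and $n-k\geq 1$ suffices; when $k=n$, the $k$-convex hypothesis degenerates to uniform convexity (the setting of Theorem \ref{main-thm-4}), so Lemma \ref{conv-c2} provides a uniform positive lower bound on every $\k_i$, which controls $\k_1\s_{k-2}(\k|1)$ from below (note that $\k_1=\tfrac{1}{r\rho}$ is itself bounded below by Lemma \ref{C0-est} and Lemma \ref{c1}). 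In either branch the inequality $\tfrac{d}{dt}\Psi(t)\leq -2a\Psi(t)$ follows, and exponential decay is concluded exactly as in the easy case, which gives $|Dr|/r\leq Ce^{-at}$.
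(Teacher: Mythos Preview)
Your proposal is correct and follows essentially the same route as the paper: you split into the cases $\a>\b+k$ (where the first term of \eqref{C1-final-ineq} alone gives decay) and $\a=\b+k$ (where you diagonalize at the maximum, identify $D\g/|D\g|$ as an eigenvector with eigenvalue $1/(r\rho)$, reduce the third term to $|D\g|^2\bigl[(n-k)\s_{k-1}(\k)+\k_1\s_{k-2}(\k|1)\bigr]$, and then treat $k<n$ via Newton--MacLaurin and $k=n$ via Lemma~\ref{conv-c2}). The only cosmetic difference is that the paper tracks the metric factor $g^{qs}=e^{-2\g}(\d_{qs}-\g_q\g_s/\rho^2)$ explicitly when passing from $\s_k^{pq}$ to $\partial\s_k/\partial h_p{}^s$, producing an extra bounded factor $e^{-2\g}$ that you absorb implicitly; this does not affect the argument.
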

\begin{proof}
	Same as the proof of Lemma \ref{c1}, we only need to calculate at the maximum point p of $|D \g|^2$ for each time.
	
	Case 1: $\a>\b+k$. By \eqref{C1-final-ineq}, we obtain
	\begin{equation*}
	\frac{\partial}{\partial t}(\frac{1}{2}|D\g|^2)\leq-  \f  \frac{(\a-\b-k)}{\b}e^{(\frac{\a}{\b}-1)\g}\sqrt{1+|D\g|^2}\s_k{}^{\frac{1}{\b}}  \r|D\g|^2.
	\end{equation*}
	 Then from the priori estimates, we know $\partial_ t(\frac{1}{2}|D\g|^2)\leq -a |D \g|^2$ for some $a>0$ and get the exponential convergence of the flow \eqref{s1:flow-n}.
	
	Case 2: $\a=\b+k$. By \eqref{C1-final-ineq}, we have
	\begin{equation}\label{asy-1}
	\frac{\partial}{\partial t}(\frac{1}{2}|D\g|^2)\leq-\frac{1}{\b} e^{\frac{\a}{\b}\g}\s_k{}^{(\frac{1}{\b}-1)}\s_k{}^{pq}(\d_{pq}|D\g|^2-\g_p\g_q).
	\end{equation}
	 At p, we have from \eqref{rg-h}
	 \begin{equation}\label{asy}
	\begin{aligned}
	h_i{}^j
	=\frac{1}{r\rho}(\d_{ij}-\g_{ij}+\frac{\g_i{}^l\g_l\g_j}{\rho^2})
	=\frac{1}{r\rho}(\d_{ij}-\g_{ij}),
	\end{aligned}
	\end{equation}
	where we used $\g_i{}^l\g_l=0$ at p.
	Multiplying the both sides of \eqref{asy} by $\g_j$, we have
	\begin{align*}
	h_i^j\g_j=\frac{1}{r\rho}(\g_i-\g_{ij}\g_j)=\frac{1}{r\rho}\g_i,
	\end{align*}
	which means $e_1 = \frac{D \g}{|D \g|}$ is an eigenvector of $\lbrace h_i^j \rbrace$ with eigenvalue $\frac{1}{r \rho}$. For convenience, we choose a local orthonormal frame filed $\{e_1, e_2, \cdots, e_n\}$ such that $\lbrace h_i^j \rbrace$ is diagonal at p. Then from \eqref{rg-g}, we have
	%if we diagonalize $h_i^j$ at the point, then $\kappa_i-\frac{1}{r\rho}\g_i=0$, $i=1, 2, \cdots, n$. So We are able to choose a local frame $\{e_1, e_2, \cdots, e_n\}$ which are eigenvectors of $h_i^j$ and $e_1=\frac{D\g}{|D\g|}$. Then $\kappa_1=\frac{1}{r\rho}$ and $\g_i=0$, $i=2,\cdots,n$. Observing that,
	\begin{equation}\label{asy-2}
		\begin{aligned}
	&e^{\frac{\a}{\b}\g}\s_k{}^{(\frac{1}{\b}-1)}\s_k{}^{pq}(\d_{pq}|D\g|^2-\g_p\g_q)\\
	=&e^{\frac{\a}{\b}\g}\s_k{}^{(\frac{1}{\b}-1)}\frac{\partial \s_k}{\partial h_p{}^s}g^{qs}(\d_{pq}|D\g|^2-\g_p\g_q)\\
	=&e^{\frac{\a}{\b}\g}\s_k{}^{(\frac{1}{\b}-1)}\frac{\partial \s_k}{\partial h_p{}^s}e^{-2\g}(\d_{qs}-\frac{\g_q\g_s}{\rho^2})(\d_{pq}|D\g|^2-\g_p\g_q)\\
	%=&e^{\frac{\a}{\b}\g}\s_k{}^{(\frac{1}{\b}-1)}(\d_{ps}|D\g|^2-\frac{\g_q\g_s}{\rho^2}|D\g|^2-\g_p\g_s+\frac{\g_q\g_s}{\rho^2}|D\g|^2)\\
	=&e^{(\frac{\a}{\b}-2)\g}\s_k{}^{(\frac{1}{\b}-1)}\frac{\partial \s_k}{\partial h_p{}^s}(\d_{ps}|D\g|^2-\g_p\g_s).
	\end{aligned}
	\end{equation}
	%At $\max|D\g|$ point, we diagonalize the second fundamental form by choosing $e_1=\frac{D\g}{|D\g|}$.Then,
	Now we use the principal curvatures $\k = (\k_1, \k_2, \cdots, \k_n)$ to estimate the right hand side of \eqref{asy-2},
	\begin{equation}\label{asy-3}
	\begin{aligned}
	&\frac{\partial \s_k}{\partial h_p{}^s}(\d_{ps}|D\g|^2-\g_p\g_s)\\
	%=&\sum_{p=1}^n\s_{k-1}(\kappa|p)(|D\g|^2-\g_p^2)\\
	=&%\sum_{p=1}^n\g_p{}^2
	|D \g|^2\sum_{i=1}^n\s_{k-1}(\kappa|i) - \sum_{p=1}^n\s_{k-1}(\kappa|p)\g_p{}^2\\
	=&  \f (n-k+1)\s_{k-1}(\kappa)-\s_{k-1}(\kappa)+\kappa_1\s_{k-2}(\kappa|1)  \r |D \g|^2\\
	=&%\sum_{p=1}^n %
	\f  (n-k)\s_{k-1}(\kappa)+\kappa_1\s_{k-2}(\kappa|1)  \r |D \g|^2. %\g_p{}^2.
	\end{aligned}
	\end{equation}
	From \eqref{asy-1}, \eqref{asy-2} and \eqref{asy-3}, we have
	\begin{align*}
	\frac{\partial}{\partial t}(\frac{1}{2}|D\g|^2)\leq-e^{(\frac{\a}{\b}-2)\g}\s_k{}^{(\frac{1}{\b}-1)}  \f  (n-k)\s_{k-1}(\kappa)+\kappa_1\s_{k-2}(\kappa|1) \r |D \g|^2.
	\end{align*}
Now we divide the discussion into two subcases.
	
Subcase 2.1: $k<n$. By Newton-MacLaurin inequality and the fact $\s_k \geq c$, we have $(\s_{k-1})^{\frac{1}{k-1}}\geq c_1 (\s_k)^{\frac{1}{k}}\geq c_2>0$ for some constants $c_1$ and $c_2$. Since $(\k|1) \in \Gamma^+_{k-1}$, we know $\s_{k-2} (\k|1) >0$. Then $\partial_t |D \g|^2 \leq -c |D \g|^2$ and the exponentially decay of $\frac{|D r|^2}{r^2} = |D \g|^2$ follows from standard argument.
	% , RHS$\geq  (n-k)\s_{k-1}(\kappa) |D\g|^2\geq c|D\g|^2$.
	%As $(\s_{k-1})^{\frac{1}{k-1}}\geq(\s_k)^{\frac{1}{k}}c_1\geq c_2>0$ and $\s_{k-2}(\kappa|p)>0$ , for the monotonicity of $\s_{k-1}$ in $\Gamma^+_{k-1}$ .Then from the $C^0$ , $C^1$ , $C^2$ estimate,we have the exponential convergence.
	
Subcase 2.2: $k=n \geq 2$. In this situation, the principal curvatures have uniform upper and lower bounds by Lemma \ref{conv-c2}. Therefore $\partial_t |D \g|^2 \leq -c |D \g|^2$ for some $c$ at point $p$, where $c$ only depends on the priori estimates. Hence the lemma is true.
	% we know $\s_k$ has the upper and lower bound, then $\kappa_i\leq C\Rightarrow c\leq\kappa_i\leq C$. So $\kappa_p\s_{k-2}(\kappa|p)\geq c$. We can get the exponential convergence as above.
\end{proof}
\begin{rem}
	When $n=1$, $k =1$, $\a \geq 2$, $\max_{\mathbb{S}^n} |D r|$ exponentially converges to $0$. This fact was proved in \cite[Lemma 4.1]{LSW20b}.
	%Asymptotic estimate  STILL NEED ANALYSE THE CASE $n=1$ $\b\neq 1$,$\a =1+\b$!
\end{rem}
\begin{proof}[Proofs of Theorem \ref{main-thm-3}, Theorem \ref{main-thm-2}, Theorem \ref{main-thm-1} and Theorem \ref{main-thm-4}]
	 We remark that the main differences between these four theorems are $C^2$ estimates. The long time existence of the flow \eqref{s1:flow-n} has been shown in Lemma \ref{longexsits}.
	% in Theorem \ref{main-thm-1} Theorem \ref{main-thm-3} Theorem \ref{main-thm-4} has been shown in Lemma \ref{longexists}.
   Now we prove that the hypersurfaces converge to a sphere along the normalized flow \eqref{s1:flow-n}. As $\max_{\mathbb{S}^n} |D r|$ converges to $0$ exponentially fast when $\a\geq \b+k$ and $n \geq 2$. we know $\max_{\mathbb{S}^n} |D^l  r|$ decays exponentially to $0$ for any $l \geq 1$ from the interpolation inequality and the priori estimates we have made.
	
	When $\a >\b+k$, we claim that the radial function $r$ converges to $1$ exponentially fast.
	%can be proved by the $C^0$ estimate.%
	 Since $M_0$ is star-shaped, it can be bounded by two spheres centred at the origin, we denote them as $\mathbb{S}^n(a_1)$ and $\mathbb{S}^n(a_2)$, $a_1 < a_2$. In the proof of Lemma \ref{C0-est}, we know the radial function of $M_t$ can be bounded by  $a_1(t)$ and $a_2(t)$ by the well-known comparison principle, where
	\begin{align*}
	a_i(t) = \f \frac{ e^{ \frac{\a-k-\b}{\b}  \gamma t}}{e^{ \frac{ \a-k-\b}{\b} \gamma t}- \frac{a_i^{\frac{\a-k-\b}{\b}}-1}{a_i^{\frac{\a-k-\b}{\b}}}  } \r^{\frac{\b}{\a-k-\b}}, \quad i =1,2.
	\end{align*}
	
	Actually, $\mathbb{S}^n(a_i(t))$ is the solution to the normalized flow \eqref{s1:flow-n} with initial hypersurface $\mathbb{S}^n(a_i)$, $i=1,2$. We see that $a_1(t)$ and $a_2(t)$ tend to $1$ as $t \to +\infty$, which implies that the radial function of $M_t$ converges to $1$ in $C^0$ norm. Combining the estimates of $|D r|$, we know $M_t$ converges to $\mathbb{S}^n(1)$ exponentially fast.
	
 When $\a =\b+k$ and $n \geq 2$,
	by Lemma \ref{exp-decay}, $r_{\max}(t) - r_{\min}(t)$ converge to $0$ exponentially fast. Using the monotonicity of $r_{\max}$ and $r_{\min}$ in $C^0$ estimate, we see that
	 %we have $- \rho r^\frac{\a}{\b} \s_k^{\frac{k}{\b}} +r $ converges to $0$ exponentially fast,
	 $r$ converges to a constant as $t \to +\infty$. Again, exponential decay of $|D r|$ implies that the convergence hypersurface is a sphere.
\end{proof}

\section{A counter example}\label{sec:6}
In this section, we prove that if $\a< \b +k$, the flow \eqref{s1:flow-n} may have unbounded ratio of radii, that is
\begin{align*}
\mathcal{R}\f X\f \cdot, t \r \r:=\frac{\max_{\mathbb{S}^n} r(\cdot,t)}{\min_{\mathbb{S}^n} r(\cdot,t)}\rightarrow \infty \quad {\rm as} \quad  t\rightarrow T
\end{align*}
for some $T >0$. Since the construction of the counter example is almost the same as \cite[Section 5]{LSW20b}. In this section, we only give a sketch proof of Proposition \ref{ce}. For more details, we refer to \cite[Section 6]{LSW20a} %, \cite[Section 5]{LSW20b}%
 and \cite[Section 7]{Xiao20}.

First, we consider a function defined on  $B_1^n(0)\times[-1,0)$,
\begin{equation}
\phi(x,t)=\left\{\begin{aligned}
&-|t|^{\theta}+|t|^{-\theta+\s\theta}|x|^2, \quad  &{\rm if} \ |x|<|t|^{\theta},\\
&-|t|^{\theta}-\frac{1-\s}{1+\s}|t|^{\theta(1+\s)}+\frac{2}{1+\s}|x|^{1+\s}, \quad &{\rm if} \ |t|^{\theta}\leq |x|\leq 1,
\end{aligned}\right.
\end{equation}
where $\s=\frac{q\theta-\b}{k\theta}$, $q=-\a+\b+k>0$ and $\theta > \frac{1}{q}$ is a positive constant.

Next, for every $t \in (-1,0)$, we extend the graph of $\phi(\cdot,t)$ to a closed, convex, rotationally symmetric hypersurface $\hat{M}_t$ enclosing $B_1(z)$, where $z=(0,\cdots,0,10)$. By choosing $a$ large enough, we can see that $\hat{M}_t$ is a sub-solution to
\begin{equation}
\left\{\begin{aligned}
\frac{\partial}{\partial t}r(x,t)=& -ar^{\frac{\a}{\b}}\s_k(\kappa)^{\frac{1}{\b}},\\
r(\cdot,0)=& r_0(\cdot).
\end{aligned}\right.
\end{equation}

Then there exists a large constant $C$ such that $\hat{M}_t\subset B_C(0)$ for all $t\in(-1,0)$. Define $b=(2C)^{\frac{\a}{\b}}$, we construct a new flow,
$$
\frac{\partial}{\partial t}\tilde{X}(x,t)=-ba\tilde{r}^{\frac{\a}{\b}}\s_k(\kappa)^{\frac{1}{\b}} \nu(x,t),
$$
where $\tilde{r}=|X-z|$. We can choose $\tau\in(-1,0)$ close to 0 and the initial hypersurface of the flow as $\tilde{X}(\cdot,\tau)=\partial B_1^n(z)$, such that the flow enclosed $B_{\frac{1}{2}}(z)$ on time interval $(\tau,0)$. We select a hypersurface $M_0$ inside $\hat{M}_{\tau}$ and enclosing the ball $B_1(z)$. By the comparison principle, $M_{t_0}$ touch the origin at some $t_0\in(\tau,0)$ and meantime enclose $B_{\frac{1}{2}}(z)$.

Finally, rescaling $M_t$ to $\tilde{M}_t=a^{-\frac{1}{q}}M_t$, one easily verifies that $\tilde{M}_t$ is a solution to flow \eqref{flow-s} and satisfies the property \eqref{ace}. We complete the proof of Proposition \ref{ce}.

\newpage

\begin{bibdiv}
\begin{biblist}
\bibliographystyle{amsplain}
%-------------------------------------------------
\bib{And96}{article}{
    author={Andrews, Ben},
    title={Contraction of convex hypersurfaces by their affine normal},
    journal={J. Differential Geom.},
    volume={43},
    date={1996},
    number={2},
    pages={207--230},
}

\bib{And00}{article}{
   author={Andrews, Ben},
   title={Motion of hypersurfaces by Gauss curvature},
   journal={Pacific J. Math.},
   volume={195},
   date={2000},
   number={1},
   pages={1--34},
}

\bib{And07}{article}{
    author={Andrews, Ben},
    title={Pinching estimates and motion of hypersurfaces by curvature
    	functions},
    journal={J. Reine Angew. Math.},
    volume={608},
    date={2007},
    pages={17--33},
}

\bib{BCD17}{article}{
	author={Brendle, Simon},
	author={Choi, Kyeongsu},
	author={Daskalopoulos, Panagiota},
	title={Asymptotic behavior of flows by powers of the Gaussian curvature},
	journal={Acta Math.},
	volume={219},
	date={2017},
	number={1},
	pages={1--16},
}	
	
\bib{BHL20}{article}{		
    author={Andrews, Ben},
    author={Hu, Yingxiang},
    author={Li, Haizhong},
    title={Harmonic mean curvature flow and geometric inequalities},
    journal={Adv. Math.},
    volume={375},
    date={2020},
    pages={107393, 28 pp.},
}

\bib{BIS20}{article}{
	title={Christoffel-Minkowski flows},
	author={Paul Bryan and Mohammad N. Ivaki and Julian Scheuer},
	%year={2020},
	eprint={arXiv: 2005.14680},
	archivePrefix={arXiv},
	primaryClass={math.DG}
}

%\bib{BF18}{article}{
 %   author={B\"{o}r\"{o}czky, K\'{a}roly J.},
  %  author={Fodor, Ferenc},
  %  title={The $L_p$ dual Minkowski problem for $p>1$ and $q>0$},
  %  journal={J. Differential Equations},
  %  volume={266},
  %  date={2019},
  %  number={12},
  %  pages={7980--8033},
%}

\bib{CNS85}{article}{
    author={Caffarelli, L.},
    author={Nirenberg, L.},
    author={Spruck, J.},
    title={The Dirichlet problem for nonlinear second-order elliptic
    	equations. III. Functions of the eigenvalues of the Hessian},
    journal={Acta Math.},
    volume={155},
    date={1985},
    number={3-4},
    pages={261--301},
}

%\bib{CHZ19}{article}{
%    author={Chen, Chuanqiang},
%    author={Huang, Yong},
%    author={Zhao, Yiming},
%    title={Smooth solutions to the $L_p$ dual Minkowski problem},
%    journal={Math. Ann.},
%    volume={373},
%    date={2019},
%    number={3-4},
%    pages={953--976},
%}

\bib{CW00}{article}{
    author={Chou, Kai-Seng},
    author={Wang, Xu-Jia},
    title={A logarithmic Gauss curvature flow and the Minkowski problem},
    journal={Ann. Inst. H. Poincar\'{e} Anal. Non Lin\'{e}aire},
    volume={17},
    date={2000},
    number={6},
    pages={733--751},
}

\bib{CW06}{article}{
    author={Chou, Kai-Seng},
    author={Wang, Xu-Jia},
    title={The $L_p$-Minkowski problem and the Minkowski problem in
  	centroaffine geometry},
    journal={Adv. Math.},
    volume={205},
    date={2006},
    number={1},
    pages={33--83},
}

\bib{CY76}{article}{
    author={Cheng, Shiu Yuen},
    author={Yau, Shing Tung},
    title={On the regularity of the solution of the $n$-dimensional Minkowski
 	   problem},
    journal={Comm. Pure Appl. Math.},
    volume={29},
    date={1976},
    number={5},
    pages={495--516},
}

\bib{F62}{article}{
    author={Firey, Wm. J.},
    title={$p$-means of convex bodies},
    journal={Math. Scand.},
    volume={10},
    date={1962},
    pages={17--24},
}

\bib{GLL11}{article}{
    author={Guan, Pengfei},
    author={Li, Junfang},
    author={Li, Yanyan},
    title={Hypersurfaces of prescribed curvature measure},
    journal={Duke Math. J.},
    volume={161},
    date={2012},
    number={10},
    pages={1927--1942},
}

\bib{GLM09}{article}{
    author={Guan, Pengfei},
    author={Lin, Changshou},
    author={Ma, Xi-Nan},
    title={The existence of convex body with prescribed curvature measures},
    journal={Int. Math. Res. Not. IMRN},
    date={2009},
    number={11},
    pages={1947--1975},
}

\bib{GaoLM18}{article}{
	author={Gao, Shanze},
	author={Li, Haizhong},
	author={Ma, Hui},
	title={Uniqueness of closed self-similar solutions to
		$\sigma_k^{\alpha}$-curvature flow},
	journal={NoDEA Nonlinear Differential Equations Appl.},
	volume={25},
	date={2018},
	number={5},
	pages={45, 26 pp.},
}

\bib{GM03}{article}{
    author={Guan, Pengfei},
    author={Ma, Xi-Nan},
    title={The Christoffel-Minkowski problem. I. Convexity of solutions of a
   	       Hessian equation},
    journal={Invent. Math.},
    volume={151},
    date={2003},
    number={3},
    pages={553--577},
}

%\bib{GX18}{article}{
%    author={Guan, Pengfei},
%    author={Xia, Chao},
%    title={$L^p$ Christoffel-Minkowski problem: the case $1<p<k+1$},
%    journal={Calc. Var. Partial Differential Equations},
%    volume={57},
%    date={2018},
%    number={2},
%    pages={Paper No. 69, 23},
%}

\bib{HLYZ16}{article}{
    author={Huang, Yong},
    author={Lutwak, Erwin},
    author={Yang, Deane},
    author={Zhang, Gaoyong},
    title={Geometric measures in the dual Brunn-Minkowski theory and their
 	       associated Minkowski problems},
    journal={Acta Math.},
    volume={216},
    date={2016},
    number={2},
    pages={325--388},
}

%\bib{HMS18}{article}{
%    author={Hu, Changqing},
%    author={Ma, Xi-Nan},
%    author={Shen, Chunli},
%    title={On the Christoffel-Minkowski problem of Firey's $p$-sum},
%    journal={Calc. Var. Partial Differential Equations},
%    volume={21},
%    date={2004},
%    number={2},
%    pages={137--155},
%}

\bib{Iva19}{article}{
    author={Ivaki, Mohammad N.},
    title={Deforming a hypersurface by principal radii of curvature and
	       support function},
    journal={Calc. Var. Partial Differential Equations},
    volume={58},
    date={2019},
    number={1},
    pages={Paper No. 1, 18},
}

\bib{Kryl87}{book}{
	author={Krylov, N. V.},
	title={Nonlinear elliptic and parabolic equations of the second order},
	series={Mathematics and its Applications (Soviet Series)},
	volume={7},
	note={Translated from the Russian by P. L. Buzytsky [P. L. Buzytski\u{\i}]},
	publisher={D. Reidel Publishing Co., Dordrecht},
	date={1987},
	pages={xiv+462},
}

\bib{L93}{article}{
    author={Lutwak, Erwin},
    title={The Brunn-Minkowski-Firey theory. I. Mixed volumes and the
	       Minkowski problem},
    journal={J. Differential Geom.},
    volume={38},
    date={1993},
    number={1},
    pages={131--150},
}

\bib{Lg}{book}{
    author={Lieberman, Gary M.},
    title={Second order parabolic differential equations},
    publisher={World Scientific Publishing Co., Inc., River Edge, NJ},
    date={1996},
    pages={xii+439},
}

\bib{LL20}{article}{
    author={Liu, Yannan},
    author={Lu, Jian},
    title={A flow method for the dual Orlicz-Minkowski problem},
    journal={Trans. Amer. Math. Soc.},
    volume={373},
    date={2020},
    number={8},
    pages={5833--5853},
}

\bib{LO95}{article}{
    author={Lutwak, Erwin},
    author={Oliker, Vladimir},
    title={On the regularity of solutions to a generalization of the
	       Minkowski problem},
    journal={J. Differential Geom.},
    volume={41},
    date={1995},
    number={1},
    pages={227--246},
}

\bib{LSW20a}{article}{
    author={Li, Qi-Rui},
    author={Sheng, Weimin},
    author={Wang, Xu-Jia},
    title={Flow by Gauss curvature to the Aleksandrov and dual Minkowski
	       problems},
    journal={J. Eur. Math. Soc. (JEMS)},
    volume={22},
    date={2020},
    number={3},
    pages={893--923},
}

\bib{LSW20b}{article}{
    author={Li, Qi-Rui},
    author={Sheng, Weimin},
    author={Wang, Xu-Jia},
    title={Asymptotic convergence for a class of fully nonlinear curvature
  	       flows},
    journal={J. Geom. Anal.},
    volume={30},
    date={2020},
    number={1},
    pages={834--860},
}

\bib{LT94}{article}{
    author={Lin, Mi},
    author={Trudinger, Neil S.},
    title={On some inequalities for elementary symmetric functions},
    journal={Bull. Austral. Math. Soc.},
    volume={50},
    date={1994},
    number={2},
    pages={317--326},
}

\bib{LW13}{article}{
    author={Lu, Jian},
    author={Wang, Xu-Jia},
    title={Rotationally symmetric solutions to the $L_p$-Minkowski problem},
    journal={J. Differential Equations},
    volume={254},
    date={2013},
    number={3},
    pages={983--1005},
}

\bib{LYZ18}{article}{
    author={Lutwak, Erwin},
    author={Yang, Deane},
    author={Zhang, Gaoyong},
    title={$L_p$ dual curvature measures},
    journal={Adv. Math.},
    volume={329},
    date={2018},
    pages={85--132},
}

\bib{Wang09}{article}{
 author={Wang, Xu-Jia},
 title={The $k$-Hessian equation},
 conference={
 	title={Geometric analysis and PDEs},
 },
 book={
 	series={Lecture Notes in Math.},
 	volume={1977},
 	publisher={Springer, Dordrecht},
 },
 date={2009},
 pages={177--252},
}

\bib{Sch08}{article}{
author={Schulze, Felix},
title={Nonlinear evolution by mean curvature and isoperimetric
	inequalities},
journal={J. Differential Geom.},
volume={79},
date={2008},
number={2},
pages={197--241},
}

\bib{Top98}{article}{
author={Topping, Peter},
title={Mean curvature flow and geometric inequalities},
journal={J. Reine Angew. Math.},
volume={503},
date={1998},
pages={47--61},
}

\bib{Tru90}{article}{
   author={Trudinger, Neil S.},
   title={The Dirichlet problem for the prescribed curvature equations},
   journal={Arch. Rational Mech. Anal.},
   volume={111},
   date={1990},
   number={2},
   pages={153--179},
}

\bib{Xiao20}{article}{
 title={Asymptotic convergence for modified scalar curvature flow},
 author={Ling Xiao},
 %year={2020},
 eprint={arXiv: 2008.05540},
 archivePrefix={arXiv},
 primaryClass={math.DG}
}

\bib{Zhao18}{article}{
 author={Zhao, Yiming},
 title={Existence of solutions to the even dual Minkowski problem},
 journal={J. Differential Geom.},
 volume={110},
 date={2018},
 number={3},
 pages={543--572},
}

\bib{Zhu15}{article}{
 author={Zhu, Guangxian},
 title={The $L_p$ Minkowski problem for polytopes for $0<p<1$},
 journal={J. Funct. Anal.},
 volume={269},
 date={2015},
 number={4},
 pages={1070--1094},
}
\end{biblist}
\end{bibdiv}
\end{document}